\documentclass[a4paper,10pt,reqno]{amsart}

\usepackage{amsmath, amsthm, amsfonts, amssymb}
\usepackage[foot]{amsaddr}
\usepackage{mathrsfs}
\usepackage[shortlabels]{enumitem}
\usepackage{graphicx, booktabs, longtable}
\usepackage{subfigure}
\usepackage{makecell} 
\usepackage{caption}
\usepackage[dvipsnames]{xcolor}
\usepackage{tikz}
\usetikzlibrary{matrix}
\usepackage{mathtools} 
\usepackage[T2A, T1]{fontenc}  
\makeatletter
\def\easycyrsymbol#1{\mathord{\mathchoice
  {\mbox{\fontsize\tf@size\z@\usefont{T2A}{\rmdefault}{m}{n}#1}}
  {\mbox{\fontsize\tf@size\z@\usefont{T2A}{\rmdefault}{m}{n}#1}}
  {\mbox{\fontsize\sf@size\z@\usefont{T2A}{\rmdefault}{m}{n}#1}}
  {\mbox{\fontsize\ssf@size\z@\usefont{T2A}{\rmdefault}{m}{n}#1}}
}}
\makeatother
\newcommand{\Zhe}{\easycyrsymbol{\CYRZH}}

\usepackage{hyperref}
\usepackage{bm}

\newtheorem{theorem}{Theorem}[section]
\numberwithin{theorem}{section}
\newtheorem{prop}[theorem]{Proposition}
\newtheorem{lemma}[theorem]{Lemma}
\newtheorem{corollary}[theorem]{Corollary}
\theoremstyle{definition}
\newtheorem{definition}[theorem]{Definition}
\newtheorem{rem}[theorem]{Remark}
\newtheorem{example}[theorem]{Example}
\numberwithin{equation}{section}

\newcommand{\N}{\mathbb{N}}
\newcommand{\Z}{\mathbb{Z}}

\newcommand{\R}{\mathbb{R}}

\newcommand{\CC}{\mathbb{C}}

\newcommand{\sH}{\mathscr{H}}

\newcommand\e{\mathrm{e}}
\newcommand\I{\mathrm{i}}
\newcommand\re{\operatorname{Re}}

\newcommand{\rd}{\mathrm{d}}

\newcommand\dom{\operatorname{dom}}

\DeclareMathOperator{\Div}{div}
\DeclareMathOperator{\curl}{curl}

\newcommand\cQ{\mathcal Q}

\newcommand\cA{\mathcal A}

\newcommand\cD{\mathcal D}

\newcommand\cJ{\mathcal J}
\newcommand\cL{\mathcal L}
\newcommand\cM{\mathcal M}
\newcommand\cH{\mathcal H}

\newcommand\hd{\hat{\delta}}
\newcommand\hr{\hat{\rho}}

\newcommand\ov\overline

\newcommand\eps\varepsilon
\renewcommand\epsilon\varepsilon
\renewcommand\rho\varrho
\newcommand\al\alpha
\newcommand\la\lambda

\newcommand\ds\displaystyle

\newcommand\p\partial

\newcommand{\supp}{\operatorname{supp}}

\DeclarePairedDelimiter{\norma}{\lVert}{\rVert}
\newcommand{\ip}[2]{\left\langle #1,#2\right\rangle}
\newcommand{\dd}{\,\mathrm d}

\newcommand{\be}{\mathbf{e}}

\author[F.~Ferraresso]{Francesco Ferraresso}
\address{Department of Computer Science,
University of Verona, 
Strada Le Grazie 15, Ca' Vignal 2,
Verona 37134,
Italy }
\email{francesco.ferraresso@univr.it}

\author[M.~ Marletta]{Marco Marletta}
\address{School of Mathematics,
Cardiff University, Abacws,
Senghennydd Road, Cathays,
Cardiff CF24 4AG, UK}
\email{MarlettaM@cardiff.ac.uk}

\date{\today}

\thanks{}
\usepackage[update,prepend]{epstopdf}

\title[]{Essential spectral geometry of the Maxwell system in unbounded domains}

\begin{document}

\begin{abstract}
We analyse the essential spectrum of $\cM = \curl \curl$ acting on divergence-free vector fields in unbounded domains of $\R^3$. We show that $\sigma_e(\cM) = [0,+\infty)$ in quasi-conical domains and $\sigma_e(\cM) \neq \emptyset$ in quasi-cylindrical domains. For horn-shaped domains with circular cross-section and eventually mean-convex boundary, we establish that $\sigma_e(\cM) = \emptyset$, independently of their volume. For horns with annular cross-section, the transverse normal harmonic field determines an effective one-dimensional Schr\"odinger operator $H_V$ with $\sigma_e(\cM) \supseteq \sigma_e(H_V)$. Finally, for a concrete family of perforated exponential horns with a double-exponential hole, we show that depending on the rate of shrinking of the hole at infinity, either $\sigma_e(\cM) = \emptyset$, or $\sigma_e(\cM) = [\gamma^2, +\infty)$, or $\sigma_e(\cM) = [0,+\infty)$. 
\end{abstract}

\maketitle


\section{Introduction}

The essential spectrum of the Maxwell operator $\cM = \curl\curl$ in an unbounded region $\Omega\subseteq \R^3$ with perfectly conducting boundary conditions is partially determined by the geometry of $\Omega$ at infinity. As in the Laplacian case, regions that are `large' at infinity can support localised low frequency quasi-modes that spread out at infinity, generating a larger essential spectrum compared to regions such as waveguides with a confining geometry. By contrast, regions with fast shrinking ends {may} not support the construction of quasi-modes, preventing the loss of the compactness. 
{Beyond} domain `size', the Maxwell essential spectrum is also sensitive to topological effects: changing the cross-section of an infinite cylinder from a disk to an annulus induces a sudden change of the Maxwell essential spectrum, independently of the size of the hole. This paper establishes results relating the domain geometry and topology to the essential spectrum of $\cM$, with particular attention to the challenging case of filled or perforated horn-shaped domains.

We assume that the unbounded region $\Omega\subseteq \R^3$ is locally Lipschitz. With $X_{N0}(\Omega) := H_0(\curl, \Omega) \cap H(\Div 0, \Omega)$ (see Section \ref{sec:prelim} below for the definition of these spaces), define the sesquilinear form
\begin{equation}\label{intro:qM}
q_{\cM}(u, v) = \int_\Omega \curl u \cdot \overline{\curl v} \, dx, \quad u,v \in X_{N0}(\Omega).
\end{equation}
The associated quadratic form $q_{\cM}[u]$ is densely defined in the Hilbert space $\sH_0 = (H(\Div 0, \Omega), \norma{\cdot}_{L^2(\Omega)^3})$, and it is closed and nonnegative. By the second representation theorem \cite[Section VI.3.1]{Kato}, there exists a unique selfadjoint operator $\cM$ associated with $q_{\cM}[u]$, and the following equality holds 
\[
q_{\cM}(u, v) = (\cM^{1/2}u, \cM^{1/2} v), \quad u,v \in X_{N0}(\Omega).
\]
When $\Omega$ is smooth, $\cM$ can be equivalently be described by
\begin{equation}\label{intro:T}
\cM = \curl\curl, \quad \dom(\cM) = \{ u \in X_{N0}(\Omega) : \curl\curl u \in L^2(\Omega)^3 \}.
\end{equation}
The operator $\cM$ appears in the spectral analysis of the time-harmonic Maxwell system in homogeneous dielectric media, and in the analysis of the Hodge Laplacian acting on differential $1$-forms in three-dimensional Euclidean domains. A fundamental question is to relate the geometry of $\Omega$ at infinity with the essential spectrum of the operator $\cM$, where
\[
\sigma_e(\cM) = \big\{ \la \in \CC : \exists (x_n)_n \subset \dom(\cM) : \norma{x_n} = 1,\, x_n \rightharpoonup 0,\, \norma{(\cM - \la) x_n} \to 0 \big\}.
\]
Our first new results establish that: \\
(i) If $\Omega$ is quasi-conical, that is, it contains a sequence of disjoint arbitrarily large balls (see Fig. \ref{fig:quasiconical}), then $\sigma_e(\cM) = [0, +\infty)$. \\
(ii) If $\Omega$ is quasi-cylindrical, meaning that it is not quasi-conical and it contains a sequence of disjoint balls of fixed radius (see Fig. \ref{fig:quasicyl}), then $\sigma_e(\cM) \neq \emptyset$.\\

\begin{figure}
\centering
\includegraphics{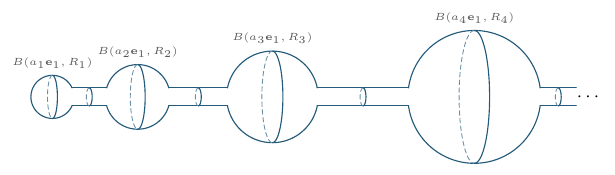}
\caption{A quasi-conical domain, obtained by gluing together a sequence of disjoint balls $B(a_j \mathbf{e}_1, R_j)$ with $a_j \to + \infty$, $R_j \to + \infty$}
\label{fig:quasiconical}
\end{figure}
\begin{figure}
\centering
\includegraphics{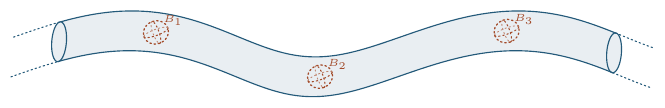}
\caption{A quasi-cylindrical domain, with a choice of disjoint balls $(B_j)_j$ of the same radius as in the definition.}
\label{fig:quasicyl}
\end{figure}

If $\Omega$ is quasi-bounded, that is, it is neither quasi-conical nor quasi-cylindrical, the situation is more complicated. 
We restrict our attention to horn-shaped domains 
\begin{equation}\label{intro:Horndom}
\Omega = \{ (\bar{x}, z) \in \R^2 \times \R : |\bar{x}| < \rho(z), z > 1 \},
\end{equation}
where $\rho \in C^{1,1}(1,\infty)$ is a positive, decreasing function with $\rho(z) \to 0$ as $z \to +\infty$, and with the property that
\begin{equation}\label{eq:rhoprime}
{\rm ess} \liminf_{z \to + \infty} (\rho'(z)^2 -\rho(z) \rho''(z)) > - 1.
\end{equation}
Note that \eqref{eq:rhoprime} is satisfied for the standard choices $\rho(z) = \e^{-z^\alpha}$, $\alpha > 0$, see Fig.\ref{fig:filledhorn} , and for $\rho(z) = z^{-\alpha}$, $\alpha > 0$. 
\begin{figure}[h!]
\centering
\includegraphics{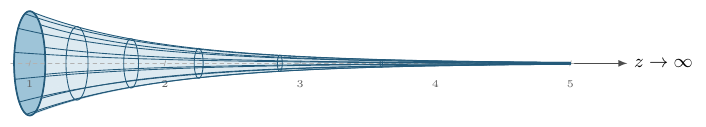}
\caption{The horn-shaped domain as in \eqref{intro:Horndom}, with $\rho(z) = \e^{-z}$.}
\label{fig:filledhorn}
\end{figure}

It is a fundamental result of Rellich, then generalised by Adams for arbitrary open sets satisfying the $C_{n,p}$-condition (see \cite{MR227765}), that $\sigma_e(-\Delta_{\Omega}^{\rm dir}) = \emptyset$. By contrast, for the Neumann Laplacian, results of Evans and Harris \cite{EvaHar} and of Davies and Simon \cite{DavSim} prove that $\sigma_e(-\Delta_{\Omega}^{\rm neu}) = \emptyset$  only provided that $\rho(z) \to 0$ fast enough as $z \to +\infty$; for instance, if $\rho(z) = \e^{-z^\alpha}$, $\alpha \in (0,+\infty)$, then $\alpha > 1$ is necessary and sufficient for $\sigma_e(-\Delta_{\Omega}^{\rm neu}) = \emptyset$. For the Maxwell operator the following theorem shows that, assuming \eqref{eq:rhoprime}, $\sigma_e(\cM)$ in a horn is empty, without additional constraints on the rate of shrinking at infinity.

\begin{theorem}\label{intro:thm:filledhorn}
Let $\Omega$ be as in \eqref{intro:Horndom}, with $\rho$ satisfying \eqref{eq:rhoprime}. Then $\sigma_e(\cM) = \emptyset$ and the embedding of $H_0(\curl, \Omega) \cap H(\Div, \Omega)$ into $L^2(\Omega)^3$ is compact. 
\end{theorem}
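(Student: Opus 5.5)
Since $\sigma_e(\cM)=\emptyset$ exactly when $\cM$ has compact resolvent, it suffices to prove the compact embedding. Indeed, $\dom(\cM^{1/2}) = X_{N0}(\Omega)$ with the graph norm $\|u\|^2+\|\curl u\|^2$, and $X_{N0}(\Omega)$ is a closed subspace of $X_N(\Omega):=H_0(\curl,\Omega)\cap H(\Div,\Omega)$. So compactness of $X_N(\Omega)\hookrightarrow L^2(\Omega)^3$ gives compactness of $(\cM+1)^{-1/2}$ on $\sH_0$.

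To prove compactness I would use a localisation argument. Cut $\Omega$ at height $z=R$ with a smooth cutoff $\chi_R(z)$, so $u=\chi_R u+(1-\chi_R)u$. On the bounded Lipschitz piece $\Omega\cap\{z<R+1\}$ the field $\chi_R u$ lies in $H_0(\curl)\cap H(\Div)$. It has tangential trace zero on the lateral boundary but not on the artificial cut, so I would instead use cutoffs in $z$ and note that $\chi_R u$ vanishes near $z=R+1$. Weber's compactness theorem for bounded Lipschitz domains then gives compactness of $u\mapsto \chi_R u$. The bottom face $z=1$ belongs to $\partial\Omega$, so there the tangential condition holds anyway. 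It therefore remains to prove a \emph{tail estimate}
\[
\int_{\Omega\cap\{z>R\}}|u|^2\,dx\le \eps(R)\big(\|u\|^2+\|\curl u\|^2+\|\Div u\|^2\big),\qquad \eps(R)\to0,
\]
after which compactness follows by a standard diagonal argument.

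For the tail estimate, the key tool is a Gaffney/Friedrichs-type integration-by-parts identity with a boundary curvature term. For smooth $u$ with $\nu\times u=0$ on $\partial\Omega$,
\[
\|\nabla u\|^2=\|\curl u\|^2+\|\Div u\|^2-\int_{\partial\Omega}\mathcal{H}\,|u\cdot\nu|^2\,dS,
\]
where $\mathcal H$ is (twice) the mean curvature with the appropriate sign. For the horn, the lateral surface has principal curvatures $1/(\rho\sqrt{1+\rho'^2})$ (circular direction) and $-\rho''/(1+\rho'^2)^{3/2}$ (meridian). Their sum is positive up to the factor $(1+\rho'^2)^{-3/2}$ times $(1+\rho'^2-\rho\rho'')/\rho$. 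By \eqref{eq:rhoprime} this is eventually at least $c/\rho>0$, so the boundary is eventually mean-convex and the boundary term has a good sign for large $z$. The non-convex compact part near $z=1$ would be handled by a trace inequality or by applying the identity only to $(1-\chi_R)u$, which requires cutoff commutator terms bounded by $\|u\|$. This yields $\|\nabla((1-\chi_R)u)\|^2\le C(\|u\|^2+\|\curl u\|^2+\|\Div u\|^2)$.

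Next I would use a transverse Poincaré inequality on each disc cross-section $D_{\rho(z)}$. The tangential condition forces the angular and $z$ components of $u$ to vanish at $|\bar x|=\rho(z)$, up to slope corrections of order $|\rho'|$. These components therefore satisfy Dirichlet Poincaré estimates $\int_{D_\rho}|v|^2\le C\rho^2\int|\nabla_{\bar x}v|^2$. The radial component has no boundary condition, but a vector field on a disc with vanishing tangential trace still satisfies
\[
\int_{D_\rho}|\bar u|^2\le C\rho^2\big(\|\curl_{\bar x}\bar u\|^2+\|\Div_{\bar x}\bar u\|^2\big),
\]
because the disc is simply connected and has no Dirichlet harmonic fields. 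Alternatively one can bound it by $\rho^2\|\nabla\bar u\|^2$ after extending across the boundary. Integrating in $z>R$ gives the tail bound with $\eps(R)\sim\sup_{z>R}\rho(z)^2\to0$.

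The main obstacle is making these identities rigorous when $\rho\in C^{1,1}$ only, so the boundary is merely $C^{1,1}$ and the curvature lies in $L^\infty$. I would justify the Gaffney identity by density of smooth fields in $X_N$ on $C^{1,1}$ domains (Grisvard/Costabel), which gives $X_N\subset H^1$ locally. A second difficulty is controlling the slope corrections when $\rho'$ does not tend to $0$. Condition \eqref{eq:rhoprime} is exactly what keeps the curvature term from destroying the bound.
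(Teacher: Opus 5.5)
Your proposal is correct and follows essentially the same route as the paper. It uses the same ingredients:
\begin{itemize}
\item eventual mean convexity, from the curvature formula together with \eqref{eq:rhoprime};
\item the Reilly identity, giving $\|\nabla w\|^2\le\|\curl w\|^2+\|\Div w\|^2$ for fields supported in the tail;
\item a $\rho(z)^2$-rescaled sectional Poincar\'e inequality under $u_\theta=0$, $u_z+\rho' u_r=0$, which together give $\|w\|^2\le C\rho(Z)^2(\|\curl w\|^2+\|\Div w\|^2)$.
\end{itemize}
The paper proves the Poincar\'e step uniformly for bounded slope by a compactness argument on the unit disc, which disposes of your slope corrections cleanly. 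The only other difference is packaging: the paper concludes via the Zhislin essential numerical range of the relative Hodge Laplacian plus Weber's local compactness, whereas you use the equivalent tail-estimate, Weber and diagonal compactness argument directly.
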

The proof of this theorem is based on an asymptotic weighted Maxwell-Poincar\'e inequality of independent interest, see Theorem \ref{thm:MaxGaff}. We remark that no assumptions on the measure of $\Omega$ are imposed. \\

We then consider the perforated horn (see Fig. \ref{fig:perforatedhorn})
\begin{equation}\label{intro:perforatedhorndeltarho}
\Omega_{\delta\rho} = \{ (\bar{x}, z) \in \R^2 \times \R : \delta(z) < |\bar{x}| < \rho(z), z > 1 \},
\end{equation}
with $C^{1,1}$ positive radii $0 < \delta(z) < \rho(z)$ decreasing to zero.

\begin{figure}[h!]
\centering
\includegraphics{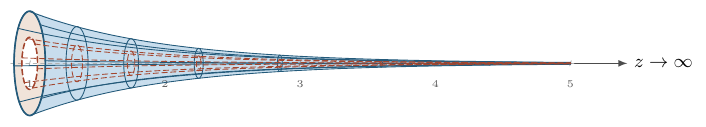}
\caption{The perforated horn \eqref{intro:Horndom}, with $\rho(z) = \e^{-z} = 2\delta(z)$}
\label{fig:perforatedhorn}
\end{figure}

As the boundary of $\Omega_{\delta\rho}$ has 2 connected components, it is well-known that the operator $\cM$ has a one-dimensional kernel given by a relative harmonic 1-form $h$, so $0 \in \sigma(\cM)$ in this case. However, establishing whether the presence of a nontrivial one-dimensional kernel affects $\sigma_e(\cM)$ is not easy. The key intuition here is that each fixed section $S_z = \{ \bar{x} \in \R^2 : \delta(z) < |\bar{x}| < \rho(z) \}$ supports the harmonic function $q(r,z) = \frac{\log(r) - \log(\delta(z))}{\log(\rho(z)) - \log(\delta(z))}$, whose gradient can be used as building block for a propagating quasi-mode for $\cM$. This idea gives a first estimate on $\sigma_e(\cM)$.
\begin{theorem}\label{thm:intro:perforatedhorn1}
Let $\Omega$ be as in \eqref{intro:perforatedhorndeltarho}. Define
\[
 M(z) = \log\bigg(\frac{\rho(z)}{\delta(z)}\bigg), \quad p(z) = \frac{M'(z)}{2 M(z)}, \quad V(z) = p^2(z) - p'(z),
\]
for $z > 1$, and the operator (as usual defined via its quadratic form)
\[
\begin{split}
&H_V = - \frac{\rd^2}{\rd z^2} + V, \\
&\dom(H^{1/2}_V) = \{ f \in L^2(1,\infty) : f \in H^1_{\rm loc}(1,\infty), f' + p f \in L^2(1,\infty), f(1) = 0 \}.
\end{split}
\]
With
\[
\hat{\delta}(z) = \frac{\delta'(z)}{\delta(z)}, \quad \hat{\rho}(z) = \frac{\rho'(z)}{\rho(z)}, 
\]
and 
\[
\eta(z) = \rho(z) \bigg( \int_0^1 (s \hat{\delta}(z) + (1-s) \hat{\rho}(z) )^2 e^{-2sM(z)} \, ds  \bigg)^{\frac12},
\]
for $z \in (1,\infty)$, assume that
\[
\lim_{Z \to + \infty} \sup_{z > Z} |\eta(z)| = 0.
\]
Then $\sigma_e(\cM) \supseteq \sigma_e(H_V)$. In particular, if $V(z) \to 0$ as $z \to +\infty$, then $\sigma_e(\cM) = [0,+\infty)$.
\end{theorem}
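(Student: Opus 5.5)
Given $\lambda\in\sigma_e(H_V)$, I will build a singular Weyl sequence for $\cM$ at $\lambda$ out of a Weyl sequence for $H_V$ at $\lambda$, lifting each function through the transverse harmonic field. The natural ansatz comes from $q$: in cylindrical coordinates $(r,\theta,z)$ the field $\nabla_{\bar x} q = \frac{1}{r M(z)}\be_r$ has $\int_{S_z} |\nabla_{\bar x} q|^2 \,d\bar x = 2\pi/M(z)$. For $f$ on $(1,\infty)$ I set
\[
u_f(\bar x,z) = \frac{f(z)\,M(z)^{1/2}}{\sqrt{2\pi}}\,\frac{\be_r}{r M(z)} = \frac{f(z)}{\sqrt{2\pi M(z)}}\,\frac{\be_r}{r},
\]
so that $\norma{u_f}_{L^2(\Omega_{\delta\rho})} = \norma{f}_{L^2(1,\infty)}$. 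The radial field $\be_r/r$ is tangentially zero on $|\bar x|=\delta,\rho$ only when the boundary is a cylinder. Since the horn is slanted, I will correct it by adding a $z$-component: I replace $u_f$ by $\nabla(g(z) q(r,z))$-type fields plus a term $w\,\be_z$, choosing $g = f/\sqrt{2\pi/M}$ up to normalisation, so that the tangential trace vanishes exactly. Writing $\nabla(g q) = g\nabla_{\bar x}q + (g q)'\be_z$, the trace of $q$ is constant ($0$ or $1$) on each boundary component, so the tangential component of $\nabla(gq)$ reduces to $g'$ times a constant. A cutoff-free fix is to use $\nabla(gq) - g'(z)\,\chi\,\be_z$ with $\chi\equiv 1$, i.e.\ $U_f := g\nabla_{\bar x} q + g\,\partial_z q\,\be_z$. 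Its tangential trace is then zero: on $r=\delta(z)$ we have $q=0$, so along the boundary $\partial_r q\,\delta' + \partial_z q = 0$, which is exactly the tangency condition. Also $\curl U_f = g'\be_z\times\nabla_{\bar x}q$ up to sign, because $\curl(g\nabla q)=\nabla g\times\nabla q$. Then $\int|\curl U_f|^2 = \int |g'|^2\,2\pi/M \,dz = \norma{f'+pf}^2$, since with $g = f\sqrt{M/2\pi}$ one gets $g'\sqrt{2\pi/M} = f' + pf$. This reproduces the form of $H_V$ exactly, which is why $V = p^2 - p'$ appears.

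The remaining defects are $\Div U_f\ne 0$ and the extra $L^2$ mass $\norma{g\,\partial_z q}$. For the mass, $\partial_z q = -\big(s\hd + (1-s)\hr\big)/M$ in the variable $s = (\log r - \log\delta)/M$. The substitution $r = \delta e^{sM}$ gives
\[
\int_{S_z} |\partial_z q|^2 = \frac{2\pi\,\delta^2}{M}\int_0^1 (\dots)^2 e^{2sM}\,ds .
\]
Using $\rho = \delta e^{M}$ and $s\mapsto 1-s$, this is comparable to $2\pi\eta(z)^2/M$, so the defect is $\le \sup_{z>Z}|\eta|\,\norma{f}$. The divergence is $\Div U_f = g\Delta_{\bar x}q + \partial_z(g\partial_z q) = \partial_z(g\partial_z q)$. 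I will remove it by Helmholtz projection $P$ onto $H(\Div 0)$: $PU_f = U_f - \nabla\phi$, where $\phi\in H^1_0$ solves $\Delta\phi = \Div U_f$. Then $\curl PU_f = \curl U_f$, and $\norma{\nabla\phi}\le \norma{g\,\partial_z q}$, because $\nabla\phi$ is the projection of the $\be_z$-part. Indeed, $g\nabla_{\bar x}q$ is itself divergence free with zero tangential trace only in the cylinder case, so I will instead argue that $\nabla\phi$ is the $L^2$-projection of $U_f$ onto $\nabla H^1_0$. Since $g\nabla_{\bar x}q + g\,\partial_zq\,\be_z = \nabla(gq) - g' q\,\be_z$, and $gq$ does not vanish on the outer boundary, I expect to need an estimate there; this is the delicate point.

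To conclude, I take a Weyl sequence $f_n$ for $H_V$ at $\lambda$, supported in $(Z_n,\infty)$ with $Z_n\to\infty$. Such sequences exist, since $\lambda\in\sigma_e$ is detected at infinity by compactly perturbing the boundary condition. I then use the form characterisation: $\lambda\in\sigma_e(\cM)$ if there are $v_n\in X_{N0}$ with $\norma{v_n}=1$, $v_n\rightharpoonup 0$ and $\sup_{\norma{w}_{q}\le1}|q_\cM(v_n,w)-\lambda(v_n,w)|\to 0$. Taking $v_n = PU_{f_n}$, the pairing $q_\cM(v_n,w)$ is compared with $(f_n'+pf_n, \cdot)$ after projecting $w$ onto the transverse mode. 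Errors are controlled by $\sup_{z>Z_n}|\eta|\to0$, and $v_n\rightharpoonup 0$ by the disjointness of the supports. \textbf{Main obstacle:} showing that $P$ and the projection of test functions $w$ onto the harmonic mode produce only $O(\sup_{z>Z_n}\eta)$ errors, i.e.\ the near-orthogonality of $\curl w$ for the non-modal part. For the final claim, $V\to0$ forces $\sigma_e(H_V)=[0,\infty)$ by standard Weyl sequences $e^{ikz}$ times spreading cutoffs, together with $\sigma(\cM)\subseteq[0,\infty)$.
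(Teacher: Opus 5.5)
There is a genuine gap. Your lifting is exactly the paper's. $U_f = g\nabla q$ with $g=f\sqrt{M/2\pi}$ is the field $U_a$, its radial part $R_f=\frac{f}{\sqrt{2\pi M}}\frac{\be_r}{r}$ has $\norma{R_f}=\norma{f}$, and $\norma{g\,\p_z q\,\be_z}^2=\int|f|^2\eta^2\,dz$ holds exactly, not just up to constants. Your formula for $\curl U_f$ is also correct. The problem is that the step turning $PU_{f_n}$ into a Weyl sequence for $\cM$ is left as your ``main obstacle''. The form-level route you sketch, which compares $q_\cM(v_n,w)$ with a transverse projection of arbitrary test fields $w$, is a detour whose key estimate you never supply.

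The missing idea is to take one more curl. Since $r\,(\curl U_f)\cdot\be_\theta=(f'+pf)/\sqrt{2\pi M}$ depends on $z$ only, the axial component $\frac1r\p_r\big(r\,(\curl U_f)\cdot\be_\theta\big)$ of $\curl\curl U_f$ vanishes. Using $\p_z M^{-1/2}=-pM^{-1/2}$,
\[
\curl\curl U_f=-\p_z\Big(\frac{f'+pf}{\sqrt{2\pi M}}\Big)\frac{\be_r}{r}=\frac{-f''-p'f+p^2f}{\sqrt{2\pi M}}\,\frac{\be_r}{r}=R_{H_Vf}.
\]
The projection does not change the curl, and $(\curl U_f,\curl w)=(\curl\curl U_f,w)$ for all $w\in H_0(\curl,\Omega)$ by density of $C^\infty_c(\Omega)^3$. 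Moreover $R_{H_Vf}\in H(\Div 0,\Omega)$. Together these give $PU_f\in\dom(\cM)$ and the exact identity
\[
(\cM-\la)PU_f=R_{(H_V-\la)f}-\la(PU_f-R_f),
\]
hence $\norma{(\cM-\la)PU_f}\le\norma{(H_V-\la)f}+|\la|\sup_{\supp f}|\eta|\,\norma{f}$. No near-orthogonality for test fields is needed.

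The ``delicate point'' about the Helmholtz projection is a non-issue, and your first claim there was right. Orthogonality to $\nabla\dot H^1_0(\Omega)$ only involves test functions vanishing on $\p\Omega$, so no tangential or normal trace condition enters. Since $\Div R_f=\frac{f}{\sqrt{2\pi M}}\frac1r\p_r(r\cdot\frac1r)=0$ in $\Omega$, we get $(R_f,\nabla\xi)=0$ for $\xi\in C^\infty_c(\Omega)$, hence for all $\xi\in\dot H^1_0(\Omega)$ by density. Therefore $P_\nabla U_f=P_\nabla(g\,\p_zq\,\be_z)$. This gives
\[
\norma{PU_f-R_f}\le\sup_{\supp f}|\eta|\,\norma{f}\quad\text{and}\quad \norma{PU_f}\ge(1-\sup_{\supp f}|\eta|)\norma{f}.
\]

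For weak nullity, do not argue through supports, because $P$ is nonlocal. Instead use that $PU_{f_n}-R_{f_n}\to0$ in norm while $R_{f_n}$ is supported in $\{z>Z_n\}$ with $Z_n\to\infty$. With these facts your Zhislin-type sequence $f_n$ lifts directly to the Weyl sequence $PU_{f_n}/\norma{PU_{f_n}}$, as in the paper.

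A minor slip: $\nabla(gq)-g'\be_z\neq g\nabla q$; the correct identity is $g\nabla q=\nabla(gq)-g'q\,\be_z$. Your final definition of $U_f$ is nevertheless the right one.
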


The proof Theorem \ref{thm:intro:perforatedhorn1} is based on the following mechanism: 1D Weyl singular sequences $(a_n)_n$ for $H_V$ can be transformed into 3D Weyl singular sequences for $\cM$ via the harmonic lifting $U_{a_n}(r, \theta, z) = a_n(z) \sqrt{\frac{M(z)}{2 \pi}} \nabla q(r,z)$, followed by the projection onto the divergence-free space $H(\Div 0, \Omega)$. The error introduced by this latter soleinodal projection is controlled by $\eta$, which corresponds to the $L^2$ norm of the axial component of $U_a$. 

Finally, for the specific choices of $\rho(z) = \e^{-z}$, $\delta(z) = \e^{-z-\e^{2\gamma z^\alpha}}$, with $\gamma$, $\alpha$ positive real parameters, we completely characterise the essential spectrum. 
\begin{theorem}\label{thm:sigmaessperforatedhornalpha}
Let $\Omega_{\delta\rho}$ be as in \eqref{intro:perforatedhorndeltarho}, with $\rho(z) = \e^{-z}$, $\delta(z) = \e^{-z-\e^{2\gamma z^\alpha}}$. Then
\[
\sigma_e(\cM) =
\begin{cases}
[0,+\infty), \quad &\textup{if $0 < \alpha < 1$,} \\
[\gamma^2, +\infty), \quad &\textup{if $\alpha = 1$,} \\
\emptyset, \quad &\textup{if $\alpha > 1$}.
\end{cases}
\] 
\end{theorem}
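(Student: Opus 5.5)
The plan is to reduce everything to the one-dimensional operator $H_V$ of Theorem~\ref{thm:intro:perforatedhorn1}. For the inclusions $\sigma_e(\cM)\supseteq\cdots$ we invoke that theorem directly. For the reverse inclusions we need a new decomposition argument, and that is where the main difficulty lies.

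\textbf{Step 1: the explicit data.} With $\rho(z)=\e^{-z}$ and $\delta(z)=\e^{-z-\e^{2\gamma z^\alpha}}$ we get
\[
M(z)=\e^{2\gamma z^\alpha},\qquad p(z)=\gamma\alpha z^{\alpha-1},\qquad V(z)=\gamma^2\alpha^2 z^{2\alpha-2}-\gamma\alpha(\alpha-1)z^{\alpha-2}.
\]
Hence:
\begin{itemize}
\item for $0<\alpha<1$ we have $V(z)\to0$;
\item for $\alpha=1$ we have $V\equiv\gamma^2$;
\item for $\alpha>1$ we have $V(z)\to+\infty$.
\end{itemize}
By standard one-dimensional theory (Weyl sequences built from cut-off plane waves, and compactness of the resolvent when the potential blows up), $\sigma_e(H_V)$ equals $[0,\infty)$, $[\gamma^2,\infty)$ and $\emptyset$ in the three cases respectively. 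The form domain condition $f'+pf\in L^2$ is harmless here because $V$ is bounded below near infinity.

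Next we verify the hypothesis on $\eta$. We have $\hat\rho=-1$ and $\hat\delta=-1-2\gamma\alpha z^{\alpha-1}M$. Splitting the square and using $\int_0^1 s^k\e^{-2sM}\,ds\lesssim M^{-k-1}$ gives
\[
\eta(z)^2\lesssim \rho(z)^2\big(M^{-1}+p^2M^2M^{-3}\big)\lesssim \e^{-2z}\,\big(1+z^{2\alpha-2}\big)\,\e^{-2\gamma z^\alpha}\to0 .
\]
Theorem~\ref{thm:intro:perforatedhorn1} then yields $\sigma_e(\cM)=[0,\infty)$ for $\alpha<1$ and $\sigma_e(\cM)\supseteq[\gamma^2,\infty)$ for $\alpha=1$.

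\textbf{Step 2: the reverse inclusions.} These are needed for $\alpha=1$ (that $\sigma_e(\cM)\cap[0,\gamma^2)=\emptyset$) and for $\alpha\ge1$ generally. I would decompose each $u\in X_{N0}(\Omega_{\delta\rho})$ slice-wise as
\[
u=U_a+w,\qquad a(z)=\sqrt{\tfrac{2\pi}{M(z)}}\;\frac{1}{2\pi}\int_{S_z}u_{\bar x}\cdot\nabla_{\bar x}q\,d\bar x,
\]
where $U_a$ is the harmonic lifting introduced after Theorem~\ref{thm:intro:perforatedhorn1}, normalised so that $\|U_a\|_{L^2}\approx\|a\|_{L^2(1,\infty)}$. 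The component $w$ then has transverse part orthogonal to the transverse harmonic field in every cross-section. Two properties are needed.
\begin{enumerate}
\item[(a)] $w$ satisfies an asymptotic weighted Maxwell--Poincar\'e inequality analogous to Theorem~\ref{thm:MaxGaff}:
\[
\int_{z>Z}|w|^2\le\varepsilon(Z)\int_{z>Z}\big(|\curl u|^2+\text{l.o.t.}\big),\qquad \varepsilon(Z)\to0 .
\]
In each annulus $S_z$, a planar field with vanishing tangential trace that is orthogonal to the harmonic field obeys a Friedrichs/Gaffney inequality with constant $\lesssim\rho(z)^2\to0$. This is the transverse Hodge decomposition on an annulus, with the constant controlled using the annulus width $\rho-\delta\sim\rho$. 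The axial component is handled as in the filled horn.
\item[(b)] Up to errors controlled by $\eta$ and by (a), the curl energy satisfies
\[
q_\cM[u]\ge \int_1^\infty\big(|a'+pa|^2\big)\,dz-o(1)\|u\|^2 \quad\text{on fields supported in } \{z>Z\}.
\]
The right-hand side equals the form of $H_V$ evaluated on $a$. Indeed, $\curl U_a$ has axial derivative part $\partial_z(a\sqrt{M}\nabla q)$, whose $L^2$ norm squared is exactly $\int|a'+pa|^2$ plus the $\nabla_z q$ correction, and that correction is again of size $\eta$.
\end{enumerate}

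\textbf{Step 3: conclusion.} Take a Weyl sequence $u_n\rightharpoonup0$ for $\cM$ at some $\lambda$. Compactness on bounded parts of the domain lets us localise $u_n$ to $\{z>Z_n\}$. Then (a) forces $w_n\to0$, while (b) makes $a_n$ a singular sequence for the form of $H_V$ with energy at most $\lambda+o(1)$. This contradicts $\lambda<\inf\sigma_e(H_V)$, which gives $\lambda\ge\gamma^2$ when $\alpha=1$ and no admissible $\lambda$ at all when $\alpha>1$.

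The main obstacle is step (a): proving the uniform slice-wise Poincar\'e inequality for the non-harmonic part on annuli whose inner radius shrinks double-exponentially, with a constant independent of the ratio $\rho/\delta$. In addition, the $z$-dependence of the decomposition produces commutator terms, of size $|\hat\rho|$ and $|\hat\delta|$, that must be absorbed. I would first try a weighted Gaffney identity with boundary curvature terms, as in the filled horn, checking that the inner boundary contributes with a favourable sign; the inner hole's boundary is concave as seen from $\Omega_{\delta\rho}$, so this sign needs care.
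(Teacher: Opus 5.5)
Your Step~1 is the paper's argument for $\sigma_e(\cM)\supseteq\sigma_e(H_V)$, and your bound $\eta^2\lesssim\e^{-2z}(1+z^{2\alpha-2})\e^{-2\gamma z^\alpha}$ is correct. The architecture of Steps~2--3 is also the paper's: split off the TEM coefficient, use a sectional gap $\gtrsim\rho(z)^{-2}$ on the remainder, control the TEM coefficient through the form $q_p$ on tails (the paper uses the Hardy-type bound of Lemma~\ref{lemma:aux1}), and localise a Weyl sequence to the tail.

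Your (b) in fact holds with no error. Since $u_z=-\rho'u_r$ at $r=\rho$ and $u_z=-\delta'u_r$ at $r=\delta$, one has $\int_0^{2\pi}\!\int_\delta^\rho(\p_z u_r-\p_r u_z)\,dr\,d\theta=\frac{d}{dz}\int_0^{2\pi}\!\int_\delta^\rho u_r\,dr\,d\theta$. Projecting $(\curl u)_\theta$ onto the normalised harmonic field therefore gives $\norma{\curl u}^2\ge q_p[a]$ exactly; this is the content of \eqref{eq:estQqp}. Your normalisation of $a$ is off by a factor $M$: it should be $a=\sqrt{M/(2\pi)}\int_{S_z}u_{\bar x}\cdot\nabla_{\bar x}q\,d\bar x$.

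The genuine gap is (a), and it carries the whole reverse inclusion. The sectional Friedrichs inequality you quote is fine; it is \eqref{eq:transverseTETMest}. What is missing is a way to bound what it requires by the three-dimensional energy: the sectional divergence, which equals $-\p_z u_z$ when $\Div u=0$, and the transverse gradient of the axial component. The route you suggest for this fails, for three reasons.

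\begin{enumerate}
\item The inner face $r=\delta(z)$ has mean curvature $\approx-1/(2\delta(z))$. The boundary term in Reilly's identity therefore has the wrong sign and size $\delta^{-1}=\e^{z+M}$, so no Gaffney-type $H^1$ bound with a usable constant exists. The normalised transverse harmonic field has sectional Dirichlet energy $\approx(M\delta^2)^{-1}$ while its sectional curl and divergence vanish, and sectional TE modes have an analogous boundary layer at $r=\delta$.
\item The axial part cannot be handled ``as in the filled horn'' either, since that argument also relied on Gaffney. Moreover, the Cartesian $u_z$ does not vanish on the inner face.
\item The commutator terms coming from the $z$-dependence are of size $|\hd|\approx 2pM$. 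Absorbing them into $\eps\sim\rho^2$ times the energy would require $\rho\hd\to0$, which is false for $\alpha>1$, and for $\alpha=1$ with $\gamma\ge1/2$.
\end{enumerate}

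The ingredients you are missing are those of Section~\ref{sec:finalsec}.
\begin{itemize}
\item Pull back by $r=\rho\e^{-2(1-s)M}$ to a straight perforated cylinder, with the isometric renormalisation $\cJ$.
\item Work with the pulled-back $dz$-component $Z$, which vanishes on both faces, so that $W=Z-\kappa X$ with $\kappa\to0$ uniformly.
\item Use the exact decoupling of the model energy $\cQ_0$ (Lemma~\ref{lemma:aux2}), which follows from $[H,T]=[H,G]=0$ and the cancellation of the face terms. This is what yields $\norma{L(X,Y)}^2+\norma{TZ}^2+\norma{GZ}^2$ from the energy.
\item Control $\cQ-\cQ_0$. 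Apart from the small coefficients $\kappa=\beta r$ and $\omega=D_B\kappa$, the huge $\beta=-(1+2p\zeta)$ enters only as $B\p_s X=-\beta\p_\zeta X$ and $\beta\p_\theta X$. These are tamed because $|\beta r|=\rho(1+2p\zeta)\e^{-\zeta}=O(\rho(1+p))$, which is Lemma~\ref{lemma:aux3}.
\item For $\alpha\ge1$, the error $\norma{pX}^2$ is not lower order, since $p$ is unbounded. It must be split into $p\cA X$, controlled by the Hardy bound via \eqref{eq:estQqp}, and $pP_\perp X$, controlled by $\rho p\to0$ and the sectional gap; see Lemma~\ref{lemma:estQQ0}(ii).
\end{itemize}
Without these steps, $\sigma_e(\cM)\subseteq[\gamma^2,\infty)$ for $\alpha=1$ and $\sigma_e(\cM)=\emptyset$ for $\alpha>1$ remain unproved.
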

This result has been inspired by the analysis in \cite{EvaHar, DavSim} for the Neumann Laplacian, even though the mechanism that generates the different regimes for $\sigma_e(\cM)$ here is completely different. The proof of Theorem \ref{thm:sigmaessperforatedhornalpha} is based on an asymptotic Maxwell-Poincaré inequality, which holds up to small error terms as the support of the test vector fields moves out to $\infty$ along the horn. To establish such inequality, one needs a combination of different ideas and methods. After pullback via a suitable diffeomorphism, one reduces the analysis of $\cM$ in $\Omega_{\delta\rho}$ to the analysis of a second-order differential operator with non-constant coefficients acting in $L^2(C)^3$, where $C$ is the perforated cylinder $(1/2,1) \times \mathbb{S}^1 \times (1,\infty)$. However, unlike previously studied cases, e.g., \cite{MR3812006}, here the matrix-valued coefficients depend on all the variables. 

In order to handle the complexity of such coefficient interactions, instead of trying to construct directly approximate TE/TM/TEM modes, we first apply the 2D Hodge-Morrey decomposition in a fixed section $S_z$; and then we compare the sectional Maxwell energy ($\cQ_{\square}$), the energy of the operator $\cM$ compressed to ${\rm span}\{h\}$ ($q_p$), the energy of a suitably defined asymptotic Maxwell energy ($\cQ_0$) and the full Maxwell energy ($\cQ$). It turns out that the transverse TE and TM modes constructed in the section $S_z$ have arbitrarily large sectional energy as $z \to +\infty$, and therefore the contribution to the essential spectrum can only come from the TEM sectional mode, corresponding to a suitable normalization of the gradient of the harmonic function $q(r,z)$.

We finally remark that a combination of the proofs of Theorem \ref{thm:intro:perforatedhorn1} and Theorem \ref{intro:perforatedhorndeltarho} yield the following more general statement.

\begin{theorem} \label{thm:intro:generalperfhorn}
Let $\Omega_{\delta\rho}$ be as in \eqref{intro:perforatedhorndeltarho}. In addition to the assumptions of Theorem  \ref{thm:intro:perforatedhorn1}, assume that \eqref{eq:decayparameters}--\eqref{eq:decayparameters3} hold, where $\kappa$ and $\omega$ are defined as in Remark \ref{rem:generality}. Then $\sigma_e(\cM) = \sigma_e(H_V)$, i.e. the essential spectrum of the Maxwell operator coincides with the essential spectrum of a one-dimensional Schr\"{o}dinger operator.
\end{theorem}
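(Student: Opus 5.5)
The inclusion $\sigma_e(\cM) \supseteq \sigma_e(H_V)$ is already given by Theorem \ref{thm:intro:perforatedhorn1}, because its hypothesis on $\eta$ is assumed. It therefore remains to prove $\sigma_e(\cM) \subseteq \sigma_e(H_V)$. I will argue by Weyl sequences. Take $\la \in \sigma_e(\cM)$ and a singular sequence $(u_n)_n \subset \dom(\cM)$ with $\norma{u_n}=1$, $u_n \rightharpoonup 0$ and $\norma{(\cM-\la)u_n}\to 0$. The aim is to produce from it a singular sequence for $H_V$ at the same $\la$.

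\textbf{Step 1: localisation at infinity.} Any bounded piece of $\Omega_{\delta\rho}$ is Lipschitz, so $X_{N0}$ embeds compactly into $L^2$ there. Using a cutoff $\chi_Z(z)$ that vanishes for $z<Z$, followed by re-projection onto $H(\Div 0)$, I may replace $u_n$ by fields supported in $\{z>Z_n\}$ with $Z_n\to\infty$, at the cost of $o(1)$ errors. The commutator terms involve only $\chi'$ and are controlled by $q_\cM[u_n]\le C$.

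\textbf{Step 2: decomposition.} Pull back to the perforated cylinder $C$ via the diffeomorphism used for Theorem \ref{thm:sigmaessperforatedhornalpha}. In each section $S_z$, write $u_n$ through the 2D Hodge--Morrey decomposition as
\[
u_n = a_n(z)\sqrt{\tfrac{M(z)}{2\pi}}\,\nabla q(\cdot,z) + w_n ,
\]
where $w_n$ is sectionally orthogonal to the harmonic field, and $a_n(z)$ is the TEM coefficient. The decay conditions \eqref{eq:decayparameters}--\eqref{eq:decayparameters3}, with $\kappa$ and $\omega$ as in Remark \ref{rem:generality}, are exactly what is needed to run the asymptotic Maxwell--Poincar\'e inequality of the proof of Theorem \ref{thm:sigmaessperforatedhornalpha} in this general setting. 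That inequality gives:
\begin{itemize}
\item the sectional energy $\cQ_\square[w_n]$ is bounded below by $K(Z_n)\norma{w_n}^2$, with $K(Z)\to\infty$, since the TE/TM sectional modes have blowing-up energy;
\item the full energy satisfies $\cQ[u_n] = q_p[a_n] + \cQ_\square[w_n] + o(1)$ on fields supported far out, where $q_p$ is the form of $H_V$, i.e.\ $\int |a'+pa|^2$.
\end{itemize}
Since $q_\cM[u_n]$ is bounded, it follows that $\norma{w_n}\to 0$. Hence $\norma{a_n}_{L^2(1,\infty)}\to 1$, $a_n\rightharpoonup 0$, and $q_p[a_n]$ stays bounded.

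\textbf{Step 3: transfer of the resolvent equation.} Test $(\cM-\la)u_n$ against the lifts $U_b$ of functions $b\in\dom(H_V^{1/2})$ supported far out. As in the proof of Theorem \ref{thm:intro:perforatedhorn1}, the solenoidal projection error of $U_b$ is controlled by $\sup_{z>Z}\eta(z)\to 0$. The cross terms between $w_n$ and $U_b$ must be bounded using the smallness of $w_n$ in the energy norm, and not merely in $L^2$. This gives
\[
\sup_{\norma{b}_{H_V^{1/2}}\le1}\big|q_p(a_n,b)-\la(a_n,b)\big|\to 0 ,
\]
so $\norma{(H_V-\la)a_n}_{\dom(H_V^{1/2})^*}\to 0$. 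Together with $\norma{a_n}\to 1$ and $a_n\rightharpoonup0$, a form-version Weyl criterion yields $\la\in\sigma_e(H_V)$. To state this cleanly, I would instead compare the resolvents, showing that $(\cM+1)^{-1}-J(H_V+1)^{-1}J^*$ is compact, where $J$ is the harmonic lifting.

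\textbf{Main obstacle.} I expect the cross-term estimate in Step 3 to be the hardest part. It requires showing that the energy norm of $w_n$, not only its $L^2$ norm, becomes small relative to the coupling with the TEM part. The coupling comes from $z$-derivatives of $\nabla q$, i.e.\ terms in $\hd,\hr$, which is why quantities like $\eta$, $\kappa$ and $\omega$ appear. My first attempt would be to absorb these terms by Cauchy--Schwarz with weight $K(Z)^{1/2}$, using the divergence of $K(Z)$.
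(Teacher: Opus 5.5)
Your route differs genuinely from the paper's. For $\sigma_e(\cM)\subseteq\sigma_e(H_V)$, the paper (Remark \ref{rem:generality}) just reruns the asymptotic Maxwell--Poincar\'e argument of Section \ref{sec:finalsec}. Cut-off Weyl sequences are inserted into the one-sided bounds $q_p[\cA X]\le\cQ$ and $N\le C\rho(R)^2\cQ$ (Theorems \ref{thm:critical}, \ref{thm:supercritical}). This is a quadratic-form argument, so it only bounds $\inf\sigma_e(\cM)$ from below. It gives equality in Theorem \ref{thm:sigmaessperforatedhornalpha} because there $\sigma_e(H_V)$ is a half-line or empty. It cannot detect gaps of $\sigma_e(H_V)$, and the hypotheses do not exclude them (e.g.\ $\rho=\e^{-z}$ with $p>0$ periodic and non-constant). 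Transferring Weyl sequences from $\cM$ to $H_V$, as you propose, is the right mechanism for the statement as written.

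As it stands, however, Step 3 is a genuine gap, and the route you sketch for it would not close it. Corollary \ref{cor:weightedTETMMaxineq} gives only $L^2$-smallness of the transverse part, and a priori nothing forces $\cQ_\square[w_n]\to0$. The paper also has no splitting $\cQ=q_p+\cQ_\square+o(1)$; it has only one-sided bounds, which are all your Step 2 actually needs.

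The missing observation is that there are no cross terms. For $u\in H_0(\curl,\Omega)$, Green's formula and Proposition \ref{prop:Ua}(iii) give $(\curl u,\curl U_b)=(u,\curl\curl U_b)=(u,R_{H_0b})$. The substitution $r\,dr=2Mr^2\,ds$ then gives $(u,R_f)=(\cA X,f)_{L^2(1,\infty)}$. Equivalently, $U_b=(b/\sqrt{\pi},0,0)$ in the variables $(X,Y,Z)$, and $\int_\square TZ=0$. Since $W_b\in X_{N0}(\Omega)$, $\curl W_b=\curl U_b$ and $(u_n,P_\nabla Z_b)=0$, you get the exact identity
\[
((\cM-\la)u_n,W_b)=(a_n,(H_0-\la)b)-\la\,(u_n,Z_b),\qquad a_n:=\cA X_n,\quad b\in C^\infty_c(1,\infty).
\]
Here $|(u_n,Z_b)|\le\norma{b}\big(\int_1^\infty\eta^2\norma{u_n(\cdot,z)}^2_{L^2(S_z)}\,dz\big)^{1/2}$, which tends to $0$ uniformly in $b$ because $\eta\to0$ and by local compactness.

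Combine this with $q_p[a_n]\le\norma{\curl u_n}^2$ (as in \eqref{eq:estQqp}). You get $|\mathfrak b(a_n,b)-\la(a_n,b)|\le\eps_n\norma{b}$ on the form core, hence $a_n\in\dom(H_V)$ and $\norma{(H_V-\la)a_n}\le\eps_n$. This is an $L^2$ bound, so neither a dual-norm Weyl criterion nor a resolvent comparison is needed. You also have $a_n\rightharpoonup0$, since $(a_n,f)=(u_n,R_f)$, and $\norma{a_n}\to1$ from your Step 2. Hence $\la\in\sigma_e(H_V)$.

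Use the uncut $u_n$ here; the cut-off is needed only in Step 2. There, do not re-project onto $H(\Div0,\Omega)$, since that destroys the support. Instead keep $\chi_Ru_n\in X_N(\Omega)$ together with the $\norma{\Div}^2$ term, as the paper does.

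Finally, Step 2 relies on Lemma \ref{lemma:estQQ0}, whose proof needs control of $\norma{pX}$: either $p$ bounded, or the hypothesis of Lemma \ref{lemma:aux1}. Conditions \eqref{eq:decayparameters}--\eqref{eq:decayparameters3} alone do not supply this; this caveat is shared with Remark \ref{rem:generality}.
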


This article is organized as follows. In Section \ref{sec:survey} we survey the available results on the essential spectrum of $\cM$ and of the scalar Laplacians in unbounded domains. Section \ref{sec:prelim} introduces the functional setting and the spectral theory framework. Section \ref{sec:quasiconcyl} treats quasi-conical domains and quasi-cylindrical domains. Section \ref{sec:filledhorn} is devoted to establishing Theorem \ref{intro:thm:filledhorn} for filled horns. Section \ref{section:7} deals with the perforated horn \eqref{intro:perforatedhorndeltarho}, and contains the proof of Theorem \ref{thm:intro:perforatedhorn1}, whereas Section \ref{sec:finalsec} contains the proof of the trichotomy result Theorem \ref{thm:sigmaessperforatedhornalpha}. The Appendix contains the proofs of some auxiliary results regarding the coordinate computations and estimates.

\section{A survey on the essential spectrum of the Laplacian and of Maxwell in unbounded domains}\label{sec:survey}

We shall give a brief overview of the main results regarding the spectrum of the Dirichlet and of the Neumann Laplacian in unbounded domains of $\R^N$, as some of the ideas and techniques available for the scalar case play a role for the spectral analysis of $\cM$ as well.

\subsection{The Dirichlet case}\label{subsec:Dirichlet}
Since the spectrum of the Dirichlet Laplacian $-\Delta^{\rm dir}$ enjoys the domain monotonicity property, it is expected that the essential spectrum $\sigma_e(-\Delta^{\rm dir})$ only depends on the "size of the domain at infinity". This leads to the following classification of open sets.

\begin{definition}\label{def: classification}
Let $\Omega \subset \R^N$ be an open set. We say that $\Omega$ is
\begin{itemize}
\item quasi-conical, if there exists a sequence of disjoint cubes $Q_n(d_n)$ of edge length $d_n$, with $d_n \to + \infty$, and $Q_n(d_n) \subset \Omega$ for all $n$. 
\item quasi-cylindrical, if it is not quasi-conical and there exists a sequence of disjoint cubes $Q_n(d)$ of fixed edge length $d > 0$, $Q_n(d) \subset \Omega$ for all $n$. 
\item quasi-bounded, if it is neither quasi-conical, nor quasi-cylindrical.
\end{itemize}
\end{definition}

Set $d_1 := \sup \{ d : \: \Omega \: \text{contains a sequence of disjoint cubes $\overline{Q}(d)$ of size $d$}\}$; then, $d_1 = + \infty$ for quasi-conical domains, $d_1 \in (0, +\infty)$ for quasi-cylindrical domains, and $d_1 = 0$ for quasi-bounded domains. 

For domains $\Omega$ that satisfy a uniform interior cone condition (UICC) (which imply that $\Omega$ is a locally finite union of uniformly Lipschitz sets, and it has strictly positive inradius) the classification of open sets in Definition \ref{def: classification} matches quite well the essential spectral properties of $-\Delta_\Omega^{\rm dir}$. Indeed, 
\begin{itemize}
\item $\Omega$ is quasi-conical and has the (UICC) if and only if $\sigma_e(-\Delta_\Omega^{\rm dir}) = [0,+\infty)$. 
\item $\Omega$ is quasi-cylindrical and has the (UICC) implies that $\sigma_e(-\Delta_\Omega^{\rm dir}) \neq \emptyset$ and $0 \notin \sigma_e(-\Delta_\Omega^{\rm dir})$.
\item $\Omega$ is quasi-bounded and has the (UICC) implies that $\Omega$ is bounded, hence $\sigma_e(-\Delta_\Omega^{\rm dir}) = \emptyset$. 
\end{itemize}

For less regular domains $\Omega$, the classification in Def. \ref{def: classification} alone gives only partial information on $\sigma_e(-\Delta_\Omega^{\rm dir})$, as removal of sets of zero capacity from $\Omega$ does not affect $\sigma_e(-\Delta_\Omega^{\rm dir})$, but may change $d_1$. Thus, an additional asymptotic capacity condition is required. We refer to \cite[Theorem 6.2]{EE}, \cite{MR2183285} for details about the essential spectrum of $-\Delta_\Omega^{\rm dir}$ in rough domains. 

An important class of quasi-cylindrical Lipschitz domains are bent and twisted waveguides. Their spectral theory is deeply interconnected with their geometry, 
see for instance \cite{MR1310767, MR2385742, MR2166298, MR1857842, MR1019002, MR3327061, MR2065680}. We mention also \cite{GJ92}, where not only the Dirichlet Laplacian, but also the Maxwell operator has been analysed in a few specific quasi-cylindrical geometries. Quasi-bounded horns provide a different class of examples, in which compactness
and eigenvalue asymptotics depend on the narrowing geometry. We refer to Adams \cite{MR227765} for compact Sobolev embeddings on unbounded domains, and to \cite{MR348295, MR757992} for eigenvalue asymptotics in horn-shaped regions, including regions of infinite volume.

\subsection{The Neumann case}\label{subsec:Neumann}
For the Neumann Laplacian, the connection between geometry and essential spectrum is more sensitive to the boundary.
%
In \cite{MR1140635}, it is shown that any closed subset of $[0,+\infty)$ may be the essential spectrum of $- \Delta_\Omega^{\rm neu}$, for a suitable choice of $\Omega$ among the open, connected subsets of the unit ball of $\R^N$. These bounded domains have a rough boundary, and indeed they are usually constructed as modifications of either the room-and-passages domain or of the comb domain (whose boundary fails to be the graph of a measurable function at least at one point). The other stark difference with the Dirichlet case is that domain monotonicity is not available anymore, and therefore a simple classification such as the one in Definition \ref{def: classification} (using only subdomains contained in $\Omega$) can only give partial information on $\sigma_e(-\Delta_\Omega^{\rm neu})$. 

It is expected that in an unbounded Lipschitz domain $\Omega$, the essential spectrum of $- \Delta_\Omega^{\rm neu}$ may be very large not only in quasi-conical domains, but even in some specific quasi-cylindrical domains. One can show indeed that if $\Omega$ is quasi-conical then $\sigma_e(- \Delta_\Omega^{\rm neu}) = [0, +\infty)$; but a direct computation shows that if $\Omega$ is a slab $\{(x_1, \dots, x_N) : |x_N | < h \}$ or a cylinder $\{(\bar{x}, x_N) : |\bar{x}| < R, x_N > 1 \}$, then again  $\sigma_e(- \Delta_\Omega^{\rm neu}) = [0, +\infty)$. Moreover, unbounded Lipschitz domains $\Omega$ with $\sigma_e(-\Delta_\Omega^{\rm neu}) = \emptyset$ are rather rare. Adams \cite[Corollary 6.46]{AdamsFou} shows that if $|\Omega \cap B(0,r)^C|$ does not decay superexponentially as $r \to + \infty$, then $\sigma_e(- \Delta_\Omega^{\rm neu}) \neq \emptyset$. 

There is also a very elegant theory developed by Amick \cite{MR502660}, Evans-Harris \cite{EvaHar}, and Edmunds-Evans \cite{EE79} connecting the so-called Amick constant, the measure of non-compactness of the embedding $\iota : H^1(\Omega) \to L^2(\Omega)$ and the reduced minimum modulus of $- \Delta_\Omega^{\rm neu}$. However, as far as we are aware, estimates of these quantities in terms of concrete geometrical quantities are not available for general domains. Among other results for the class of generalised ridged domains, in the case of the horn $\Omega = \{ (\bar{x}, z) \in \R^N : z > 0, |\bar{x}| < \rho(z)\}$, \cite{EvaHar} shows that $0 \in \sigma_e(-\Delta_\Omega^{\rm neu})$ if and only if
\[
\text{either} \:\: \int_0^\infty \rho(r)^{N-1} dr = \infty \quad \text{or} \quad \limsup_{s\to +\infty} \bigg( \int_s^\infty \rho(r)^{N-1} \, dr \int_0^s \rho(r)^{1-N} dr \bigg)= \infty.
\]
%

\subsection{The Maxwell case}
The analysis of the essential spectrum of the operator $\cM = \curl\curl$ when $\Omega$ is unbounded has not been considered in detail in the literature. In the last 7 years, starting from \cite{MR3942228}, a new rising interest in these problems has been observed, see e.g., \cite{MR3812006, MR4057880, MR4774282, BCOZ25, dhia2026}. 

First of all, the spectrum of $\cM$ is explicitly computable in some specific geometries \cite{MR3942228}: 
\begin{itemize}
\item If $\Omega = \R^3$, then $\sigma(\cM) = \sigma_e(\cM) = [0,+\infty)$. 
\item Given $h > 0$, if $\Omega = \{ (\bar{x}, z) \in \R^3 : |z| < h/2 \}$, then $\sigma(\cM) = \sigma_e(\cM) = [0,+\infty)$.
\item If $\Omega$ is an infinite cylinder with section $(0, L_2) \times (0, L_3)$, then $\sigma_e(\cM) = [\frac{\pi^2}{\max\{L_2^2, L_3^2\}}, + \infty)$.
\item In bounded Lipschitz domains, $\sigma_e(\cM) = \emptyset$.
\end{itemize}

In the straight cylinder $\Omega = D \times \R$, Filonov \cite{MR3812006, MR4057880, MR4774282} treats the case of non-constant-coefficient Maxwell operators. In particular, one rediscovers the classical result (see e.g., \cite{MR4398317} ) that
\begin{equation}\label{sigmaecylinder}
\sigma_e(\cM) = 
\begin{cases}
[\mu_2(D), + \infty), \quad &\textup{if $D$ is simply connected,} \\
[0, +\infty), \quad &\textup{otherwise}.
\end{cases}
\end{equation}
Here $\mu_2(D)$ is the second Neumann Laplacian eigenvalue in the cross-section $D$. The result \eqref{sigmaecylinder} can be further generalised to domains with a finite number of (straight) cylindrical ends \cite{MR4703376}.

Building on the previous result by Filonov, in \cite{BCOZ25} the authors proved that for the specific class of bent waveguides $\Omega$ the essential spectrum of $\cM$ in $\Omega$ coincide with the essential spectrum of $\cM$ in the straight cylinder $D \times \R$, provided that the curvature $\kappa$ of the axial curve tends to zero at infinity. 

Finally, \cite{PW26} recently treated the existence of a Gaffney inequality in unbounded domains. Among other results they show that if $\Omega$ is the bilipschitz image of an unbounded convex set $C$, then $0 \notin \sigma_e(\cM)$ if and only if $\Omega$ is bounded in (at least) 2 directions. Among the examples they consider an infinite L-shaped pipe and an infinite spiraling tube, and they show that $0 \notin \sigma_e(\cM)$. 

The representation of $\cM$ as the relative Hodge laplacian on co-closed 1-forms places the Maxwell operator within the literature on differential forms. Relevant examples include \cite{MR2087228, MR2274831, MR2833575, MR1047764} on noncompact manifolds \emph{without boundary} and \cite{MR2020035} for compact manifolds with or without boundary. We remark that the spectral analysis of $\cM$ in unbounded domains further requires the control of the boundary conditions even in domains whose boundary degenerates at infinity.

We conclude this section with a table comparing the essential spectra of $-\Delta_\Omega^{\rm dir}$, $-\Delta_\Omega^{\rm neu}$ and of $\cM$ in concrete cases. We use the wording `exponential horn' to refer to the prototypical case in which $\rho(z) = \e^{-z}$. We remark that, as far as we are aware, the essential spectrum of $-\Delta^{\rm neu}$ in a general perforated exponential horn is not known.

\begin{table}[htbp]
\centering\small
\setlength{\tabcolsep}{4pt}
\renewcommand{\arraystretch}{1.45}
\begin{tabular}{@{}p{43mm}ccc@{}}
\toprule
Domain & $\sigma_e(-\Delta^{\rm dir})$
& $\sigma_e(-\Delta^{\rm neu})$ & $\sigma_e(\cM)$\\
\midrule
$\R^3$ & $[0,\infty)$ & $[0,\infty)$ & $[0,\infty)$\\
$\R^2\times(-h/2,h/2)$ & $[\pi^2/h^2,\infty)$ & $[0,\infty)$ & $[0,\infty)$\\
$(0,L)^2\times\R$ & $[2\pi^2/L^2,\infty)$ & $[0,\infty)$ & $[\pi^2/L^2,\infty)$\\
$A(r,R)\times\R$ & $[\lambda_1(A(r,R)),\infty)$ & $[0,\infty)$ & $[0,\infty)$\\
Filled exponential horn & $\varnothing$ & $[1,\infty)$ & $\varnothing$  (Thm.\ref{intro:thm:filledhorn})\\
Perforated exponential horn & $\varnothing$ & Not addressed & See Thm. \ref{thm:sigmaessperforatedhornalpha}\\
\bottomrule
\end{tabular}
\caption{Essential spectra for the Dirichlet and Neumann Laplacians, and for Maxwell, in model unbounded domains of $\R^3$.}
\end{table}

\section{Preliminaries and notation}\label{sec:prelim}

\subsection{Function spaces and operators}
Let $\Omega$ be a locally Lipschitz connected open set of $\R^3$, with outer unit normal $\nu$. We will work either in full Hilbert space $L^2(\Omega)^3$, with associated norm $\norma{\cdot}$, or in the divergence-free Hilbert space $\sH_0 = (H(\Div 0, \Omega), \norma{\cdot})$, where
\[
H(\Div 0, \Omega) = \{ u \in L^2(\Omega)^3 : (u, \nabla \xi ) = 0, \quad \xi \in \dot{H}^1_0(\Omega) \},
\]
where 
$$\dot{H}^1_0(\Omega) = \{ \varphi \in L^6(\Omega) : \nabla \varphi \in L^2(\Omega), \: \varphi = 0\: \text{locally on $\p \Omega$} \}$$ 
is the homogeneous Sobolev space, endowed with the norm $\norma{\nabla \cdot}_{L^2(\Omega)^3}$. Note that $\dot{H}^1_0(\Omega)$ coincides with the usual $H^1_0(\Omega)$ whenever a Poincaré inequality for functions having square integrable gradient is available in $\Omega$. $\sH_0$ is a well-defined Hilbert space in view of the Helmholtz decomposition
\[
L^2(\Omega)^3 = \nabla \dot{H}^1_0(\Omega) \oplus H(\Div 0, \Omega).
\]
Define the Sobolev spaces 
\begin{align}
&H(\Div, \Omega) = \{ u \in L^2(\Omega)^3 : \Div u \in L^2(\Omega) \}, \quad \norma{u}^2_{H(\Div, \Omega)} = \norma{u}^2 + \norma{\Div u}^2, \\
&H(\curl, \Omega) = \{ u \in L^2(\Omega)^3 : \curl u \in L^2(\Omega)^3 \}, \quad \norma{u}^2_{H(\curl, \Omega)} = \norma{u}^2 + \norma{\curl u}^2.
\end{align}
Further define 
\[
H_0(\curl, \Omega) := \overline{C^\infty_c(\Omega)^3}^{H(\curl, \Omega)}.
\]
It is a classical result that if $\Omega$ is locally Lipschitz, then $u \in H(\curl, \Omega)$ admits a unique well-defined distributional tangential trace $\nu \times u$, in suitable defined space of distributions of regularity $H^{-1/2}$, see \cite{MR1944792}. Thus, $H_0(\curl, \Omega)$ admits the equivalent reformulation
\[
H_0(\curl, \Omega) = \{ u \in H(\curl, \Omega) : \nu \times u = 0 \: \textup{on $\p \Omega$} \}.
\]
We now introduce the spaces
\begin{align}
&X_N(\Omega) = H_0(\curl, \Omega) \cap H(\Div, \Omega), \quad \norma{u}^2_{X_N(\Omega)} = \norma{\curl u}^2 + \norma{\Div u}^2 + \norma{u}^2, \\
&X_{N0}(\Omega) = H_0(\curl, \Omega) \cap H(\Div 0, \Omega), \\
&H^1_N(\Omega) = \{ u \in H^1(\Omega)^3 : \nu \times u = 0, \: \textup{on $\p \Omega$} \}.
\end{align}

\begin{rem}
As a warning for the reader, we emphasize that 
\[
\overline{X_N(\Omega)}^{L^2(\Omega)^3} = L^2(\Omega)^3, \quad \overline{X_{N0}(\Omega)}^{L^2(\Omega)^3} = H(\Div 0, \Omega),
\]
and by definition $ \overline{C^\infty_c(\Omega)^3}^{H(\curl, \Omega)} = H_0(\curl, \Omega)$, but
\[
\overline{C^\infty_c(\Omega)^3 \cap X_N(\Omega)}^{X_N(\Omega)} \neq X_N(\Omega), \quad \overline{C^\infty_c(\Omega)^3 \cap X_{N0}(\Omega)}^{X_N(\Omega)} \neq X_{N0}(\Omega).
\]
\end{rem}

We recall that in view of a result by Weck \cite{MR343771} and Weber \cite{MR561375}, the space $X_N(K)$ is compactly embedded in $L^2(K)^3$ whenever $K$ is a bounded Lipschitz domain of $\R^3$. In particular, the space $X_{N}$ and its closed subspace $X_{N0}$ are locally compactly embedded in $L^2$. \\
Recall that $\cM$ is defined via its quadratic form $q_{\cM}$, as in \eqref{intro:T}. It is worth noting that $q_{\cM}[u] = 0$ is possible whenever $\p \Omega$ is not connected. If $\p \Omega = \bigcup_{j=1}^M \Gamma_j $, then there exists $M-1$ (non-constant) harmonic functions $\varphi_j$ in $\Omega$, with $\varphi_j = 1$ on $\Gamma_j$ and $\varphi_j = 0$ on $\bigcup_{k \neq j} \Gamma_k$. The vector fields $h_j = \nabla \varphi_j$ therefore satisfies $\curl h_j = \Div h_j = 0$ in $\Omega$, $\nu \times h_j = 0$ on $\p \Omega$, and assuming that $h_j \in L^2(\Omega)^3$, $h_j \in X_{N0}(\Omega)$, so $q_{\cM}[h_j] = 0$ for all $j = 1, \dots, M -1$. The 1-form corresponding to $h_j$ via the usual musical isomorphism is called relative harmonic 1-form, and the dimension of the kernel $\ker q_{\cM}$ coincides with $M -1$, the number of connected components of $\p \Omega$ minus 1.

A closely related operator is $\Delta_{\rm Hod}^{\rm rel}$ that, on smooth domains $\Omega$, acts as the differential expression $\curl \curl - \nabla \Div$ on the domain
\[
\dom(\Delta_{\rm Hod}^{\rm rel}) = \{ u \in X_N(\Omega) : \curl \curl - \nabla \Div \in L^2(\Omega)^3, \, \Div u = 0 \, \textup{on $\p \Omega$} \, \}.
\]
This is the so-called relative Hodge Laplacian acting on $1$-forms $L^2 \Lambda^1(\Omega)$. It can be equivalently be written as $\Delta_{\rm Hod}^{\rm rel} = \delta d + d \delta$, where $d$ is the exterior derivative and $\delta$ is the co-differential $\delta = -\star d \star$, $\star$ being the Hodge duality operator. In bounded Lipschitz domains $\Omega$, it is a well-known fact that the (discrete) spectrum of $\Delta_{\rm Hod}^{\rm rel}$ is the union of the Dirichlet Laplacian eigenvalues and the (discrete) spectrum of $\cM$ in $\Omega$.

\subsection{Gaffney inequality and Reilly identity}
Let $\Omega$ be a bounded domain of $\R^3$, and assume that either $\Omega$ is convex or $\p \Omega$ is of class $C^{1,1}$. An important result due to Gaffney \cite{MR48138} and Friedrichs \cite{MR87763}, valid in the more general Riemannian setting, states that there exists a constant $C > 0$ such that
\begin{equation}\label{eq:Gaff}
\norma{D u}^2 \leq C \big( \norma{\curl u}^2 + \norma{ \Div u}^2 + \norma{u}^2),
\end{equation}
for all $u \in X_N(\Omega)$. In particular, the space $X_N(\Omega)$ is continuously embedded in $H^1_N(\Omega)$. Another fundamental result is the so-called Reilly identity \cite{MR474149}, first proved in the context of Riemannian geometry (see also the related result by Grisvard \cite[Thm 3.1.1.1]{MR3396210} in Euclidean domains). In the case of vector fields that are normal to $\p \Omega$, Reilly identity states that if $\p \Omega$ is of class $C^{1,1}$, then
\[
\int_\Omega |D u|^2 = \int_\Omega (|\curl u|^2 + |\Div u|^2 ) - 2 \int_{\p \Omega} \cH (u \cdot \nu)^2\, d\sigma,
\]
for all vector fields $u \in H^1_N(\Omega)$, where $\cH$ is the mean curvature of $\p \Omega$. We immediately conclude that if $\cH \geq 0$, that is, if $\Omega$ is mean-convex, and $\p \Omega \in C^{1,1}$, then 
\begin{equation}\label{eq:strongGaff}
\int_\Omega |D u|^2 \leq \int_\Omega (|\curl u|^2 + |\Div u|^2 ),
\end{equation}
for all $u \in H^1_N(\Omega)$, hence for all $u \in X_N(\Omega)$ by density. Thus, in sufficiently regular mean-convex domains $\Omega$, the strong Gaffney inequality \eqref{eq:strongGaff} holds.
We will see that this will play an important role in unbounded domains that have asymptotically positive mean curvature. 

\subsection{Spectral theory tools} \label{subsec:specth}
Let $\cL$ be a densely defined linear operator in the separable Hilbert space $\sH$, with domain $\dom(\cL) \subset \sH$. We define the essential numerical range of $\cL$ (see \cite{MR4083777}) by
\[
W_e(\cL) = \big\{ \la \in \CC : \exists (x_n)_n \subset \dom(\cL) : \norma{x_n} = 1,\, x_n \rightharpoonup 0,\, (\cL x_n, x_n) \to \la \big\}.
\]
The set $W_e(\cL)$ is always closed and convex, and it contains ${\rm conv}(\sigma_e(\cL))$, the convex hull of the (Weyl) essential spectrum
\[
\sigma_e(\cL) = \big\{ \la \in \CC : \exists (x_n)_n \subset \dom(\cL) : \norma{x_n} = 1,\, x_n \rightharpoonup 0,\, \norma{(\cL - \la) x_n} \to 0 \big\}.
\]
The fundamental property that we will use in the sequel (see \cite[Thm. 3.8]{MR4083777}) is that, if $\cL$ is selfadjoint and semibounded from below, then in fact
\[
W_e(\cL) = {\rm conv}(\hat{\sigma}_e(\cL)), \quad \hat{\sigma}_e(\cL) = \sigma_e(\cL) \cup \{+\infty\}.
\]

In addition to $\sigma_e(\cL)$ and $W_e(\cL)$, assuming that $\sH = L^2(\Omega)^3$ for an unbounded Lipschitz domain $\Omega$ of $\R^3$, we need to define the Zhislin version of these sets, recently investigated in \cite{SBMMCT}. Assume that $(R_n)_n$ is any sequence of nondecreasing positive real numbers with $R_n \to + \infty$ as $n \to + \infty$. Then we define
\[
\begin{split}
\sigma_{e, \Zhe}(\cL) &= \{ \omega \in \CC : \: \textup{there exists $(u_n)_n \subset \dom(\cL)$, $\norma{u_n} = 1$} \\
&\textup{$ \supp u_n \subset \Omega\setminus\overline{B(0,R)} $, and $\cL u_n - \omega u_n \to 0$ in $L^2(\Omega)^3$ } \},
\end{split}
\]
and the Zhislin essential numerical range \cite{SBMMCT}
\[
\begin{split}
W_{e, \Zhe}(\cL) &= \{ \omega \in \CC : \: \textup{there exists $(u_n)_n \subset \dom(\cL)$, $\norma{u_n} = 1$} \\
&\textup{$ \supp u_n \subset \Omega\setminus\overline{B(0,R)} $, and $(\cL u_n, u_n) - \omega \to 0$ as $n \to +\infty$} \}.
\end{split}
\]

We recall the following useful property of the Zhislin essential numerical range.

\begin{lemma}\label{lemma:convexZENR}
Let $\cL$ be any densely defined, closed linear operator with domain $\dom(\cL) \subset L^2(\Omega)^3$, for some locally Lipschitz open set $\Omega$ of $\R^3$. 
The Zhislin essential numerical range $W_{e,{\scriptscriptstyle \Zhe}}(\cL)$ is convex.
\end{lemma}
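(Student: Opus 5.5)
We must show convexity of $W_{e,\Zhe}(\cL)$: given $\omega_1,\omega_2 \in W_{e,\Zhe}(\cL)$ and $t\in(0,1)$, the point $t\omega_1+(1-t)\omega_2$ lies in the set. The plan mirrors the proof that the classical essential numerical range is convex. The new feature is that both approximating sequences can be chosen with \emph{disjoint supports}, and then the cross terms vanish identically.

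Let $(u_n)_n$ and $(v_n)_n$ be Zhislin sequences for $\omega_1$ and $\omega_2$. So $u_n, v_n \in \dom(\cL)$ have unit norm, $\supp u_n, \supp v_n \subset \Omega\setminus\overline{B(0,R_n)}$, and $(\cL u_n,u_n)\to\omega_1$, $(\cL v_n,v_n)\to\omega_2$. I interpret the support condition as requiring the supports to leave every ball.

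The first step is to pass to subsequences with disjoint supports. The natural setting is that the elements have bounded support, for instance cut-offs of domain elements. If only unbounded supports are available, I would first replace $u_n$ by $\chi u_n$ for a smooth cut-off $\chi$, assuming $\dom(\cL)$ is stable under multiplication by smooth cut-offs. This holds for the differential operators used in the paper. I view this reduction as the main technical obstacle for a general closed $\cL$.

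Assuming bounded supports, the subsequences are built inductively. Take $u_{n_1}$ and pick $S_1$ with $\supp u_{n_1}\subset B(0,S_1)$. Then choose $m_1$ with $R_{m_1}>S_1$, so that $\supp v_{m_1}$ lies outside $B(0,S_1)$. Next choose $S_1'$ containing $\supp v_{m_1}$, and $n_2$ with $R_{n_2}>S_1'$, and so on. This yields $u_k' := u_{n_k}$ and $v_k' := v_{m_k}$ with disjoint supports, each supported outside a ball of radius tending to infinity.

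With disjoint supports we have $(u_k',v_k')=0$. Since $\cL$ is a local operator, $\cL u_k'$ is supported in $\supp u_k'$, hence $(\cL u_k', v_k') = (\cL v_k', u_k') = 0$. Locality is again automatic for differential operators, and in general it is part of what the Zhislin framework of \cite{SBMMCT} presupposes. Now set
\[
w_k = \sqrt{t}\,u_k' + \sqrt{1-t}\,v_k'.
\]
Then $w_k\in\dom(\cL)$, $\norma{w_k}^2 = t+(1-t)=1$, and $\supp w_k$ lies outside $B(0,\min(R_{n_k},R_{m_k}))$. Moreover,
\[
(\cL w_k,w_k) = t(\cL u_k',u_k') + (1-t)(\cL v_k',v_k') \to t\omega_1+(1-t)\omega_2.
\]
Hence $t\omega_1+(1-t)\omega_2 \in W_{e,\Zhe}(\cL)$, which proves convexity.
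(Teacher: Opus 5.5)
\textbf{There is a genuine gap.} Your argument needs two hypotheses the lemma does not grant: (a) locality, i.e.\ $(\cL u,v)=0$ whenever $\supp u\cap\supp v=\emptyset$; and (b) the ability to truncate each $u_n$ to a boundedly supported element of $\dom(\cL)$ while changing $(\cL u_n,u_n)$ only slightly. The lemma is stated for an arbitrary densely defined closed $\cL$, for which neither holds, so as it stands you have proved a special case.

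Contrary to your remark, (b) also fails for the operators in the paper:
\begin{itemize}
\item $\dom(\cM)\subset H(\Div 0,\Omega)$ is not invariant under a cut-off $\chi=\chi(z)$, since $\Div(\chi u)=\nabla\chi\cdot u$.
\item For $\Delta_{\rm Hod}^{\rm rel}$, to which the lemma is applied in Proposition~\ref{prop:essnumran}, the boundary condition $\Div u=0$ on $\p\Omega$ is broken by $\nabla\chi\cdot u=\chi'(z)\,u_z$. This need not vanish on the lateral boundary of a horn, because there $u$ is normal and $\nu_z\neq 0$.
\end{itemize}
Even when truncation is admissible, you would still need a commutator estimate for $[\cL,\chi]$, which you do not supply. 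Without bounded supports the disjoint-support trick cannot start at all: two sequences supported in sets like $\{z>R\}$ of a horn can never be made disjoint by re-indexing.

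\textbf{Missing idea.} The cross terms never need to vanish. The paper's argument, in the right order, goes as follows.
\begin{enumerate}
\item Thin out $(v_m)$ so that $(u_n,v_n)\to 0$. This uses only the escaping supports: for fixed $n$, $|(u_n,v_m)|\le\norma{u_n}_{L^2(\Omega\setminus B(0,R_m'))}\to 0$ as $m\to\infty$, where $R_m'$ are the radii for $(v_m)$.
\item Let $A_n$ be the $2\times 2$ matrix with $(\cL(x_1u_n+x_2v_n),x_1u_n+x_2v_n)=(A_nx,x)_{\CC^2}$. Its diagonal entries are $\lambda_n=(\cL u_n,u_n)$ and $\mu_n=(\cL v_n,v_n)$.
\item By the Toeplitz--Hausdorff theorem the numerical range of $A_n$ is convex. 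Hence each $\omega_n\in[\lambda_n,\mu_n]$ equals $(\cL w_n,w_n)$ for some $w_n=x_1u_n+x_2v_n$ with $|x_1|^2+|x_2|^2=1$. Complex coefficients are needed in general when $\cL$ is not selfadjoint; real $\cos\theta_n,\sin\theta_n$ do not suffice.
\item The off-diagonal entries $(\cL u_n,v_n)$ and $(\cL v_n,u_n)$ may be huge, but they are never estimated. One only needs $\norma{w_n}^2=1+2\re\bigl(x_1\overline{x_2}\,(u_n,v_n)\bigr)\to 1$.
\item Since $\supp w_n$ still escapes every ball, $w_n/\norma{w_n}$ is a Zhislin sequence for $\lim\omega_n$.
\end{enumerate}
This uses neither locality nor truncation. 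Your disjoint-support construction is valid for local forms such as $\norma{\curl u}^2+\norma{\Div u}^2$ on $X_N(\Omega)$, where cut-offs are admissible. That, however, is a different statement from the one claimed.
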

\begin{proof}
Suppose that $(u_n)_{n\in\mathbb N}$ and $(v_n)_{n\in\mathbb N}$ are Zhislin
sequences such that $\lambda_n := (\cL u_n, u_n)  \to \lambda$ and $\mu_n:= (\cL v_n , v_n) \to \mu$.
Consider the $2\times 2$ matrices
\[ M_n = \left(\begin{array}{cc} (\cL u_n, u_n) & (\cL u_n, v_n) \\
 (\cL v_n, u_n) & (\cL v_n, v_n) \end{array}\right), \]
 observing that both $\lambda_n$ and $\mu_n$ lie in the numerical range $W(M_n)$ which is a convex set.
Consequently, given any point $\omega_n \in\mbox{conv}\{\lambda_n,\mu_n\}$ there exists $\theta_n\in [0,\pi)$
such that, with $w_n = \cos(\theta_n)u_n + \sin(\theta_n)v_n$,
\[ \frac{(\cL w_n, w_n)}{\| w_n \|^2} = \omega_n; \]
since $\lambda_n\to\lambda$ and $\mu_n\to\mu$, given any point $\omega \in\mbox{conv}\{\lambda,\mu\}$ 
we may choose the $\omega_n$ to be such that $\omega_n\to \omega$, $n\to\infty$.
By extracting a subsequence of $(v_n)_{n\in\mathbb N}$ we may assume without loss of generality
that $(u_n,v_n)\to 0$ as $n\to\infty$, and hence $\| w_n \| \to 1$ as $n\to \infty$. Thus $\tilde{w}_n:=w_n/\| w_n\|$
gives a Zhislin sequence such that $ (\cL \tilde{w}_n, \tilde{w}_n) = \omega_n \to \omega$, $n\to\infty$. \end{proof}

\begin{rem}
From \cite[Prop. 4.5(ii)]{SBMMCT}, we further deduce that if $\cL$ is a selfadjoint and semibounded operator with associated quadratic form $q_{\cL}$, the Zhislin essential spectrum of $q_{\cL}$, defined as
\[
\begin{split}
W_{e, \Zhe}(q_{\cL}) &= \{ \omega \in \CC : \: \textup{there exists $(u_n)_n \subset \dom(q_{\cL})$, $\norma{u_n} = 1$} \\
&\textup{$ \supp u_n \subset \Omega \setminus \overline{B(0,R_n)}$, and $q_{\cL}[u_n] \to \omega$ as $n \to +\infty$} \}
\end{split}
\]
coincides with $W_{e, \Zhe}(\cL)$. 
\end{rem}

\section{Quasi-conical and quasi-cylindrical domains}\label{sec:quasiconcyl}

\subsection{Quasi-conical domains} We consider here quasi-conical domains $\Omega$ in the sense of Definition \ref{def: classification}. We remark that in the following lemma we do not require any regularity of $\p \Omega$.
%
\begin{lemma}\label{lemma:quasi-conical}
If $\Omega$ is a quasi-conical open set of $\R^3$, then $\sigma_e(\cM) = [0,+\infty)$.
\end{lemma}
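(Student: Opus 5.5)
Fix $\lambda = k^2 \ge 0$. The plan is to construct a Weyl singular sequence for $\cM$ supported in the disjoint cubes $Q_n(d_n)\subset\Omega$, $d_n\to\infty$, provided by Definition~\ref{def: classification}. Since $\sigma_e(\cM)\subseteq\sigma(\cM)\subseteq[0,+\infty)$ and $\sigma_e$ is closed, it suffices to show that every $\lambda>0$ lies in $\sigma_e(\cM)$. No regularity of $\p\Omega$ is needed, because all test fields will be compactly supported inside the cubes. Such fields automatically belong to $H_0(\curl,\Omega)$. They also belong to $H(\Div 0,\Omega)$ as soon as they are divergence-free, since for compactly supported divergence-free $u$ and $\xi\in\dot H^1_0(\Omega)$ we have $(u,\nabla\xi)=-(\Div u,\xi)=0$ after extending by zero.

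For the construction, let $c_n$ be the centre of $Q_n(d_n)$, so that $B(c_n,d_n/2)\subset\Omega$. Take a plane wave $w(x)=\curl(\be_3 \e^{\I k x_1}) = \I k\,\e^{\I k x_1}(0,-1,0)$, up to sign, which satisfies $\Div w=0$ and $\curl\curl w = k^2 w$. A cutoff would destroy $\Div w=0$, so instead of $\chi w$ we work at the level of a vector potential. Set
\[
u_n = \curl\big(\chi_n(x)\,\e^{\I k x_1}\be_3\big)/\|\cdot\|, \qquad \chi_n(x)=\chi\big((x-c_n)/R_n\big),
\]
with $R_n=d_n/4$ and $\chi\in C_c^\infty(B(0,1))$. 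Then $u_n$ is smooth, compactly supported in $B(c_n,d_n/2)$, and divergence-free. Hence $u_n\in\dom(\cM)$, where $\cM$ acts as $\curl\curl$, because for compactly supported smooth fields the form identity $q_\cM(u_n,v)=(\curl\curl u_n,v)$ holds for every $v\in X_{N0}$.

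It remains to estimate $\|(\cM-k^2)u_n\|$. Write $A_n=\chi_n \e^{\I k x_1}\be_3$. Then
\[
\curl\curl\curl A_n - k^2\curl A_n=\curl\big(\nabla\Div A_n-\Delta A_n-k^2A_n\big)=\curl\big(\nabla\Div A_n - (\Delta+k^2)A_n\big).
\]
The first term is a gradient, so its curl vanishes. For the second, $(\Delta+k^2)A_n=(\Delta\chi_n+2\I k\,\p_1\chi_n)\e^{\I kx_1}\be_3$. This is an error of size $O(R_n^{-1})$ pointwise relative to $\chi$-scale amplitude. Its curl contributes $O(R_n^{-1})$ after accounting for the oscillating factor. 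In $L^2$, the error is therefore $O(R_n^{-1}) R_n^{3/2}$, whereas $\|\curl A_n\|\approx k\,R_n^{3/2}\|\chi\|_{L^2}$, since $\curl A_n = \chi_n\curl(\e^{\I kx_1}\be_3)+O(R_n^{-1})$. The normalized error is thus $O(1/(kR_n))\to0$.

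Finally, the $u_n$ have disjoint supports and are normalized, so they are orthonormal and hence converge weakly to $0$. Thus $k^2\in\sigma_e(\cM)$ for every $k>0$, and closedness gives $0\in\sigma_e(\cM)$ as well.

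The main obstacle is the divergence-free constraint for arbitrary open $\Omega$ with no boundary regularity. It is handled by using the curl of a cutoff potential, which avoids any Helmholtz projection. The only remaining points are routine: the bookkeeping of the $O(R_n^{-1})$ commutator terms, and verifying that smooth compactly supported divergence-free fields lie in $\dom(\cM)$ for the form-defined operator.
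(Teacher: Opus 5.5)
Your proposal is correct and is essentially the paper's proof. The paper also takes $E_m=|\bar k|^{-1}\curl(\e^{\I\bar k\cdot\bar x}\psi_m\be_3)$, with $\psi_m$ an $L^2$-normalized cutoff at the scale of the cubes, uses that this field is divergence-free and compactly supported, and shows that $\curl\curl E_m-|\bar k|^2E_m\to0$; your choice $\bar k=(k,0)$ is a special case. One harmless bookkeeping slip: the dominant error term $-2k^2\p_1\chi_n\,\e^{\I kx_1}(\be_1\times\be_3)$ gives a normalized error of order $k/R_n$, not $1/(kR_n)$, which still tends to zero since $k$ is fixed.
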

\begin{proof}
Since $\Omega$ is quasi-conical, there exists a sequence of cubes $Q_m = Q(x_m, \rho_m] \subset \Omega$ with $\rho_m \to + \infty$ as $m \to + \infty$. Let $\psi \in C^\infty((-1,1), \R)$ be a smooth function with compact support, $\norma{\psi}_{L^2(-1,1)} = 1$, and define $\psi^i_m(x_i) = \frac{1}{\sqrt{\rho_m}} \psi(\frac{x_i - (x_m)_i}{\rho_m})$. With $x = (\bar{x}, x_3)$, define $\psi_m(x) = \psi^1_m(x_1) \psi^2_m(x_2) \psi^3_m(x_3)$. Note that $\supp \psi_m \subset Q_m$ and $\norma{\psi_m}_{L^2(\Omega)} = 1$. Further define 
\[E_m(x) =\frac{1}{|\bar{k}|}\curl (\e^{\I \bar{k} \cdot \bar{x}} \psi_m(x)\be_3) = \frac{1}{|\bar{k}|}(\nabla (\e^{\I \bar{k} \cdot \bar{x}} \psi_m(x)) \times \be_3)\] 
for $\bar{k}\in \R^2 \setminus \{0\}$, extended-by-zero to the whole of $\Omega$. We claim that $(E_m)_m$ is a Weyl singular sequence, verifying $E_m \in \dom(\cM)$, $\norma{E_m}_{L^2(\Omega)} = 1 + O(1/\rho_m)$, $E_m \rightharpoonup 0$ and $\curl^2 E_m - |\bar{k}|^2 E_m \to 0$ as $m \to + \infty$. \\
We first note that $E_m$ is a smooth function, $\Div E_m = 0$ by definition, and
\[
E_m(x) = \frac{1}{|\bar{k}|}\begin{pmatrix}
-\p_{x_2} (\e^{\I \bar{k} \cdot \bar{x}} \psi_m(x))\\
\p_{x_1} (\e^{\I \bar{k} \cdot \bar{x}} \psi_m(x))\\
0
\end{pmatrix}
=: \frac{1}{|\bar{k}|} ((\nabla_{\bar{x}}^\perp \e^{\I \bar{k} \cdot \bar{x}}) \psi_m(x) + \e^{\I \bar{k} \cdot \bar{x}} \nabla_{\bar{x}}^\perp \psi_m(x))
\]
Therefore, $\supp E_m \subset Q_m$ for all $m$, hence it is clear that $E_m \rightharpoonup 0$ in $L^2(\Omega)$. We further note that $\norma{\p_i \psi_m}_{L^2(\Omega)} = O(1/\rho_m)$ as $m \to + \infty$, $i = 1,2,3$; we conclude that 
\[
\norma{E_m}_{L^2(\Omega)} = 1 + O(1/\rho_m)
\]
as $m \to + \infty$. Finally, since $\curl \curl E_m = - \Delta E_m$ as $\Div E_m = 0$, 
\begin{multline*}
\curl \curl E_m - |\bar{k}|^2 E_m = \frac{1}{|\bar{k}|} ((\nabla_{\bar{x}}^\perp (-\Delta - |\bar{k}|^2)\e^{\I \bar{k} \cdot \bar{x}}) \psi_m(x) \\
+ 2 (\nabla_{\bar{x}}^\perp (\nabla_{\bar{x}}  \e^{\I \bar{k} \cdot \bar{x}})\cdot \nabla_{\bar{x}} \psi_m)+ O(1/\rho_m)) \to 0
\end{multline*}
as $m \to + \infty$. Therefore the claim is true and $\omega = |\bar{k}|^2 > 0$ belongs to $\sigma_e(\cM)$. Since $\sigma_e(\cM)$ is closed and $\omega \in (0, +\infty)$ was chosen freely, we conclude the proof. 

\end{proof}

\subsection{Quasi-cylindrical domains}
We consider here quasi-cylindrical domains $\Omega$ in the sense of Definition \ref{def: classification}. 

\begin{theorem}\label{thm:essspecquasicyl}
Let $\Omega \subset \R^3$ be a quasi-cylindrical open set. Then $\sigma_e(\cM) \neq \emptyset$ and there exists a constant $C> 0$ such that $\min \sigma_e(\cM) \leq \frac{C}{R^2}$, where $R$ is the supremum among all the admissible radii for the sequence of disjoint balls $B(x_j, R)$ in the definition of quasi-cylindrical open set.
\end{theorem}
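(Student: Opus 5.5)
Because the balls $B_j = B(x_j,R)\subset\Omega$ are disjoint and infinitely many, we have $|x_j|\to\infty$. The plan is to find a point of $W_{e,\Zhe}(q_{\cM})$ that is at most $C/R^2$, and then conclude via the results recalled in Subsection \ref{subsec:specth}.

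\textbf{Step 1: the test field.} Fix once and for all a smooth, divergence-free vector field $E$ with compact support in the unit ball $B(0,1)$ and $\norma{E}_{L^2}=1$. For instance, take $E=\curl(\phi\,\be_3)$ with $\phi\in C^\infty_c(B(0,1))$, normalised. Set $c_0:=\norma{\curl E}^2_{L^2}$, a constant independent of $\Omega$. Now fix any admissible radius $r<R$, or $r=R$ if the supremum is attained, and define
\[
E_j(x)=r^{-3/2}E\big((x-x_j)/r\big).
\]
Then $E_j$ is supported in $B(x_j,r)\subset\Omega$ and is smooth and divergence-free. Extended by zero, it therefore lies in $X_{N0}(\Omega)$, and in fact in $\dom(\cM)$, since $\curl\curl E_j\in L^2$. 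Moreover $\norma{E_j}=1$, $q_{\cM}[E_j]=c_0/r^2$, and the supports escape every ball $B(0,R_n)$. Being divergence-free with compact support, each $E_j$ is orthogonal to $\nabla\dot H^1_0(\Omega)$, so it belongs to $\sH_0$.

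\textbf{Step 2: a lower bound on $\sigma_e(\cM)$ from below.} Since the supports of the $E_j$ are disjoint, $E_j\rightharpoonup0$. Hence $c_0/r^2$ lies in the essential numerical range: $c_0/r^2\in W_e(\cM)$. By \cite[Thm. 3.8]{MR4083777}, since $\cM\ge0$ is selfadjoint,
\[
W_e(\cM)={\rm conv}\big(\sigma_e(\cM)\cup\{+\infty\}\big).
\]
If $\sigma_e(\cM)$ were empty, then $W_e(\cM)$ would reduce to $\{+\infty\}$ and would contain no finite point, a contradiction. So $\sigma_e(\cM)\ne\emptyset$. Furthermore, the convex hull of $\sigma_e(\cM)\cup\{+\infty\}$ is $[\min\sigma_e(\cM),+\infty)$, and this set contains $c_0/r^2$. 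Therefore $\min\sigma_e(\cM)\le c_0/r^2$. Letting $r\uparrow R$ gives $\min\sigma_e(\cM)\le C/R^2$ with $C=c_0$.

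\textbf{Main point to check.} The only delicate step is making sure the Hilbert-space setting matches: the essential numerical range must be taken for the operator $\cM$ acting in $\sH_0$. Since each $E_j$ lies in $\dom(\cM)\subset\sH_0$, the weak convergence and normalisation hold in $\sH_0$, so the cited theorem applies directly. No regularity of $\partial\Omega$ is needed, because the fields are compactly supported in the interior. Quasi-cylindricity, as opposed to quasi-conicity, is used only to ensure that $R<\infty$, so that the bound is meaningful. If $\Omega$ were quasi-conical, Lemma \ref{lemma:quasi-conical} would apply instead.
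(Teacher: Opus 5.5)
Your proposal is correct and follows essentially the same route as the paper. Like the paper, you rescale the fixed field $\curl(\psi\be_3)$ into the disjoint balls, normalise it, note that the resulting sequence converges weakly to zero with Rayleigh quotient $C(\psi)/R^2$, and conclude from $W_e(\cM)={\rm conv}(\sigma_e(\cM)\cup\{+\infty\})=[\min\sigma_e(\cM),+\infty)$. Two of your details tidy up points the paper passes over: taking admissible radii $r<R$ and letting $r\uparrow R$ in case the supremum is not attained, and checking that $E_j\in\dom(\cM)$ rather than only the form domain.
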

\begin{proof}
It is sufficient to show that $\omega_e = \min \sigma_e(\cM) < \infty$. Since $\cM$ is selfadjoint and semibounded from below, its essential numerical range $W_e(\cM)$ coincides with the convex hull of the extended essential spectrum of $\cM$, that is $W_e(\cM) = [\omega_e, +\infty)$. We will show that $\omega_e \leq \frac{\pi^2}{R^2}$ by constructing a sequence $(u_n)_n \subset X_{N0}(\Omega)$, $\norma{u_n}=1$, $u_n \rightharpoonup 0$ such that $\limsup_{n \to +\infty} \norma{\curl u_n}^2 \leq C/R^2$. \\
Let $\psi \in C^\infty_c(B(0,1)) \setminus \{0\}$. Define the vector field $w = \curl(\psi \be_3) = \nabla \psi \times \be_3$. Then $\norma{w} = \norma{\nabla_{\bar{x}} \psi}$, and $w \in H^1_0(B(0,1)^3 \cap X_{N0}(B(0,1))$ as $\Div w = \Div \curl(\psi \be_3) = 0$. Let $B(x_n, R)$, $n \in \N$, $x_n \to \infty$, be the sequence of balls in the definition of quasi-cylindrical open set $\Omega$. Define now
\[
w_n(x) = \frac{1}{R^{3/2}}
w\bigg(\frac{x - x_n}{R}\bigg),
\]
for $x \in B(x_n, R)$, and extended by zero outside $B(x_n, R)$ to the whole of $\Omega$. A direct computation shows that
\[
\norma{w_n}_\Omega = \norma{w}_\Omega, \quad \Div w_n = 0, \quad \norma{\curl w_n}^2_\Omega = \frac{\norma{\nabla_{\bar{x}} \p_3 \psi}_{B(0,1)}^2 + \norma{\Delta_{\bar{x}} \psi}_{B(0,1)}^2}{R^2},
\]
for all $n \in \N$. Define then $\tilde{w}_n = \frac{w_n}{\norma{w}_\Omega}$, $n \in \N$. Then $\norma{\tilde{w}_n} = 1$ and $\tilde{w}_n \rightharpoonup 0$ in $L^2(\Omega)^3$. We conclude that
\[
\omega_e = \min W_e(\cM) \leq \norma{\curl \tilde{w}_n}^2_\Omega = \frac{\norma{\nabla_{\bar{x}} \p_3 \psi}_{B(0,1)}^2 + \norma{\Delta_{\bar{x}} \psi}_{B(0,1)}^2}{\norma{\nabla_{\bar{x}} \psi}_{B(0,1)}^2 R^2} = \frac{C(\psi)}{R^2}.
\]

\end{proof}

\begin{theorem} \label{thm:essspecslablike}
Let $\Omega \subset \R^3$ be an open set containing a sequence of disjoint bounded open cuboids $(Q_n)_n$ such that there exists a rigid motion for which $Q_n = (0, a_n) \times (0, b_n) \times (0,c)$, with $a_n>0, b_n>0, c > 0$ for all $n$. We further assume that $a_n \to +\infty$ and $b_n \to + \infty$ as $n \to +\infty$. Then $\inf \sigma_e(\cM) \leq \pi^2/c^2$. 
\end{theorem}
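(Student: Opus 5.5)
The plan is to argue exactly as in the proof of Theorem \ref{thm:essspecquasicyl}, via the essential numerical range. Since $\cM$ is selfadjoint and nonnegative, \cite[Thm. 3.8]{MR4083777} gives $W_e(\cM) = \mathrm{conv}(\sigma_e(\cM)\cup\{+\infty\})$. It therefore suffices to construct $u_n \in X_{N0}(\Omega)$ with $\norma{u_n}=1$, $u_n \rightharpoonup 0$ and $\limsup_n \norma{\curl u_n}^2 \le \pi^2/c^2$. This yields $\min W_e(\cM) \le \pi^2/c^2$, hence $\sigma_e(\cM)\neq\emptyset$ and $\inf\sigma_e(\cM) \le \pi^2/c^2$. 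Since $\curl$, $\Div$ and tangential traces are invariant under rigid motions, we may work in the coordinates in which $Q_n = (0,a_n)\times(0,b_n)\times(0,c)$.

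The test fields are TE-type slab modes. Fix $\varphi \in C^\infty_c((0,1)^2)\setminus\{0\}$ and choose $L_n := \min\{a_n,b_n\} \to +\infty$. Set
\[
\psi_n(x) = \varphi(\bar{x}/L_n)\sin(\pi x_3/c), \qquad E_n = \curl(\psi_n \be_3) = \sin(\pi x_3/c)\,\nabla^\perp_{\bar{x}}\varphi(\bar{x}/L_n)\cdot L_n^{-1}
\]
(up to sign convention) in $Q_n$, and extend $E_n$ by zero to $\Omega$. We then check the following properties.
\begin{enumerate}[(i)]
\item $E_n$ is smooth in $\overline{Q_n}$ with $\Div E_n = 0$.
\item $E_n$ vanishes near the lateral faces, because $\varphi$ has compact support.
\item On the faces $x_3\in\{0,c\}$ the tangential components $E_1, E_2$ carry the factor $\sin(\pi x_3/c)=0$, and the normal component $E_3 \equiv 0$.
\end{enumerate}
By (iii), the zero extension has vanishing tangential trace on $\p Q_n$, so it lies in $H_0(\curl,\Omega)$. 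Also by (iii), its normal component on $\p Q_n$ vanishes, so no divergence is created across $\p Q_n$. Hence $(E_n,\nabla\xi)=0$ for all $\xi\in\dot H^1_0(\Omega)$, and $E_n \in X_{N0}(\Omega)$. This check of the boundary conditions for the zero extension is the only delicate point, and the choice of the sine profile together with a field without $\be_3$-component is made precisely to handle it.

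For the energy, use $\curl\curl(\psi\be_3) = \nabla\p_3\psi - \Delta\psi\,\be_3$, so that
\[
\norma{\curl E_n}^2 = \norma{\nabla_{\bar x}\p_3\psi_n}^2 + \norma{\Delta_{\bar x}\psi_n}^2, \qquad \norma{E_n}^2 = \norma{\nabla_{\bar x}\psi_n}^2 .
\]
Separating variables, with $\int_0^c \sin^2 = \int_0^c\cos^2 = c/2$, we obtain
\[
\frac{\norma{\curl E_n}^2}{\norma{E_n}^2} = \frac{\pi^2}{c^2} + \frac{\norma{\Delta\varphi}^2_{L^2((0,1)^2)}}{L_n^2\,\norma{\nabla\varphi}^2_{L^2((0,1)^2)}} \longrightarrow \frac{\pi^2}{c^2}.
\]
Finally set $u_n = E_n/\norma{E_n}$. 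Then $\norma{u_n}=1$, and the $u_n$ are supported in the disjoint sets $Q_n$. Because they form an orthonormal family, $u_n\rightharpoonup 0$ holds automatically, whether or not the $Q_n$ escape to infinity. This concludes the argument.
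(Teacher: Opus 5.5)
Your proof is correct and is essentially the paper's construction: you take the zero extension of $\curl(\psi_n\be_3)$ with the $\sin(\pi x_3/c)$ profile and transverse cut-offs adapted to $(0,a_n)\times(0,b_n)$, and you test it in the form domain. The only difference is that the paper inserts a plane wave $\e^{\I\xi\cdot\bar x}$, so that the $|\xi|^2$-term dominates the cut-off errors, and then lets $|\xi|\to 0$; your exact identity $\norma{\curl E_n}^2=\frac{\pi^2}{c^2}\norma{E_n}^2+\norma{\Delta_{\bar x}\psi_n}^2$, together with dilation scaling, reaches $\pi^2/c^2$ directly, and your orthonormality argument for $u_n\rightharpoonup 0$ avoids having to check that the $Q_n$ escape to infinity.
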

\begin{proof}
We estimate $\omega_e = \inf \sigma_e(\cM)$ as in the proof of Theorem \ref{thm:essspecquasicyl}, by constructing a suitable sequence of test vector-fields as in the definition of $W_e(\cM)$. Let $\rho, \sigma \in C^\infty_c(0,1)$ be two smooth cut-off functions taking only nonnegative real values and being equal to $1$ in $(\frac13, \frac23)$, and let us assume that they have $L^2$-norm equal to 1, and that $\norma{\rho^{(j)}}^2_{L^2(0,1)} \simeq c_j \norma{\rho}_{L^2(0,1)}^2 = 1$, with an identical estimate for $\sigma$, $c_j$ being positive constants. Define the rescaled cut-off functions $\rho_n(x_1) = \rho(x_1/a_n)$ and $\sigma_n(x_2) = \sigma(x_2/b_n)$, for all $n \in \N$. Now define
\[
\varphi_n(x) = \rho_n(x_1) \sigma_n(x_2) \e^{\I (\xi \cdot \bar{x})} \sin(\pi x_3/c),
\]
for $x \in Q_n$, and finally set 
\[
u_n(x) = \curl (\varphi_n(x) \be_3) = \nabla \varphi_n \times \be_3,
\]
for all $x \in Q_n$, $n \in \N$. By construction, $u_n \in H^1_0(Q_n)^3$, as it has compact support in the variables $x_1$ and $x_2$, whereas at $x_3 = 0$ and $x_3 = c$ both its tangential and normal components are vanishing. Thus we can extend this vector field by zero to the whole of $\Omega$; we still call this extension $u_n$. We have therefore constructed a sequence of $H^1_0(\Omega)^3$ vector fields $u_n$ with support in $Q_n$, satisfying $\Div u_n = 0$ in $\Omega$ and $\nu \times u_n = 0$ on $\partial \Omega$ by construction. Since the cuboids $Q_n$ are disjoint, the support of $u_n$ escapes any compact subset of $\Omega$ as $n \to + \infty$. We now compute the $L^2$-norm of $u_n$. We have
\[
\begin{split}
&\norma{u_n}_{L^2(\Omega)^3}^2 = \norma{(\p_{x_1} \varphi_n, \p_{x_2} \varphi_n, 0)}^2_{L^2(\Omega)^3} \\
&= \norma{\sigma_n \sin(\pi x_3/c)(\rho_n' + \xi_1 \rho_n)}_{L^2(Q_n)}^2 + \norma{\rho_n  \sin(\pi x_3/c) (\sigma_n' + \xi_2 \sigma_n)}_{L^2(Q_n)}^2\\
& = C \bigg(\int_0^{a_n} (\rho_n'(x_1) + \xi_1 \rho_n(x_1))^2 \bigg)\bigg(\int_0^{b_n} \sigma_n(x_2)^2\bigg) \\
&\hspace{2cm} + C \bigg(\int_0^{a_n} \rho_n(x_1)^2 \bigg)\bigg(\int_0^{b_n} (\sigma_n' + \xi_2 \sigma_n)^2\bigg)
\end{split}
\]
where $C = \int_0^c \sin(\pi x_3/c)^2 = c/2$. Now we note that
\[
\int_0^{a_n} (\rho_n^{(j)}(x_1))^2 = c_j a_n^{1-2j}, \qquad \int_0^{b_n} (\sigma_n^{(j)}(x_1))^2 = c_j b_n^{1-2j}
\]
so that 
\begin{equation} \label{eq:L^2normun}
\norma{u_n}_{L^2(\Omega)^3}^2 = \int_\Omega |\xi|^2 \rho_n(x_1)^2 \sigma_n(x_2)^2 \sin(\pi x_3/c)^2 dx + \eps_n = A |\xi|^2 a_n b_n + o(a_n b_n),
\end{equation}
for a suitable constant $A > 0$, as $n \to +\infty$. It remains to compute $\norma{\curl u_n}^2$. Since $\curl u_n = \curl \curl (\varphi_n \be_3) = \nabla_{\bar{x}} \p_3 \varphi_n \be_{12} - \Delta_{\bar{x}} \varphi_n \be_3$, a direct computation shows that
\[
\curl u_n(x) = 
\begin{pmatrix}
&\I \xi_1 \rho_n \sigma_n \cos(\pi x_3/c) \pi/c + R_1(n, x) \\
&\I \xi_2 \rho_n \sigma_n \cos(\pi x_3/c) \pi/c + R_2(n, x) \\
& |\xi|^2 \varphi_n - R_3(n, x) 
\end{pmatrix}
\]
where $R_j$ are remainder terms that have $L^2$-norm much smaller than $a_n b_n$ as $n \to + \infty$. Since $\int_0^c |\cos(\pi x_3/c)|^2 dx_3 = \int_0^c |\sin(\pi x_3/c)|^2 dx_3 = C$ we conclude that 
\[
\begin{split}
\int_\Omega |\curl u_n|^2 dx &= |\xi|^4 \int_\Omega |\varphi_n|^2 + |\xi|^2 \frac{\pi^2}{c^2} \int_\Omega |\varphi_n|^2 + o(a_n b_n)\\
&= |\xi|^2 \bigg( |\xi|^2 + \frac{\pi^2}{c^2}\bigg) \int_\Omega |\varphi_n|^2 + o(a_n b_n) \\
&= \bigg( |\xi|^2 + \frac{\pi^2}{c^2}\bigg) \norma{u_n}^2_{L^2(\Omega)^3} + o(a_n b_n) 
\end{split}
\]
as $n \to + \infty$, from which we conclude that
\[
\frac{\norma{\curl u_n}_{L^2(\Omega)^3}^2}{\norma{u_n}^2_{L^2(\Omega)^3}} = \bigg( |\xi|^2 + \frac{\pi^2}{c^2}\bigg) + \frac{o(a_n b_n)}{A a_n b_n}
\]
hence $\frac{\norma{\curl u_n}_{L^2(\Omega)^3}^2}{\norma{u_n}^2_{L^2(\Omega)^3}} \to ( |\xi|^2 + \frac{\pi^2}{c^2})$ as $n \to +\infty$.\\
Since $\inf \sigma_e(\curl \curl) \leq \lim_{n \to +\infty} \frac{\norma{\curl u_n}^2}{\norma{u_n}^2} = |\xi|^2  + \pi^2/c^2$, and since $|\xi| > 0$ was arbitrary, we conclude the proof by letting $|\xi| \to 0^+$.
\end{proof}

\begin{rem}
Information on the growth of sequences of subdomains of $\Omega$ can only give upper bounds on the least essential spectral point of $\curl \curl$ in $\Omega$. In specific geometries where it is possible to lift Weyl sequences for the Laplacian on a lower dimensional section of $\Omega$ to Weyl sequences for $\curl\curl$ in the whole of $\Omega$, preserving the constraints $\Div u = 0$ in $\Omega$ and $\nu \times u = 0$ on $\partial \Omega$, more precise results can be proved (see e.g. the slab or the infinite cylinder).
\end{rem}

\begin{theorem}\label{thm:essspec-cylinders}
Let $\Omega \subset \R^3$ be an unbounded Lipschitz open set of $\R^3$ containing a sequence of cylinders $C_n = D \times (0, a_n)$ with $a_n \to + \infty$ and $D$ is a bounded domain of $\R^2$ admitting non-constant Neumann Laplacian eigenfunctions that are constant at the boundary. Then $\inf \sigma_e(\cM) \leq \mu_{1c}(D)$, where $\mu_{1c}(D)$ is the first Neumann Laplacian eigenvalue corresponding to a non-constant eigenfunction that is constant at the boundary. If $D$ is the disk of radius $R$, $\mu_{1c}(D) = \frac{j_{1,1}^2}{R^2} \simeq  \frac{14.682}{R^2}$, where $j_{1,1}$ is the first positive zero of the Bessel function $J_1$.
\end{theorem}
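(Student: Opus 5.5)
Following the proofs of Theorems \ref{thm:essspecquasicyl} and \ref{thm:essspecslablike}, I would bound $\omega_e = \inf\sigma_e(\cM) = \min W_e(\cM)$ from above. I would construct a sequence $(u_n)_n \subset X_{N0}(\Omega)$ with $\norma{u_n}=1$, $u_n \rightharpoonup 0$ and $\limsup_n \norma{\curl u_n}^2 \le \mu_{1c}(D) + \eps$, where $\eps>0$ is arbitrary. Since the cylinders $C_n$ may overlap or sit near the origin, I would first pass to a subsequence and choose, inside each $C_n$, a sub-cylinder $D\times(b_n, b_n+L_n)$ with $L_n\to\infty$ such that these sub-cylinders are pairwise disjoint and escape to infinity. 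This is possible because $a_n\to\infty$ and the $C_n$ are congruent copies of $D$ times long intervals, after a rigid motion. Weak convergence to zero then follows from the disjointness of the supports.

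The building block is a TE-type mode. Let $\phi$ be a non-constant Neumann eigenfunction on $D$ with eigenvalue $\mu=\mu_{1c}(D)$ and $\phi|_{\p D}\equiv c$. Replacing $\phi$ by $\phi - c$ changes neither the Neumann condition nor the eigenvalue equation in the interior, but then $\phi + \mu^{-1}\cdot 0$ solves $-\Delta\phi=\mu\phi+\mu c$. It is therefore cleaner to work with the field $\nabla_{\bar x}^\perp \phi$ directly. Let $\chi_n(z)=L_n^{-1/2}\chi((z-b_n)/L_n)$ with $\chi\in C_c^\infty(0,1)$ and $\norma{\chi}=1$, and set
\[
u_n = \curl(\phi(\bar x)\chi_n(z)\be_3) = \chi_n(z)\,\nabla_{\bar x}^\perp\phi + \text{(no $z$-component)}.
\]
This field is divergence free. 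On the lateral boundary $\p D\times\R$ its tangential components are $(\nabla^\perp\phi)\cdot\tau = -\p_\nu\phi = 0$ by the Neumann condition, and the $z$-component vanishes. Hence $\nu\times u_n=0$, while $\chi_n$ kills the ends. Because $\phi$ is constant on $\p D$, the field is also compatible with the perfect conductor condition: this is exactly where the hypothesis enters. Indeed, $\curl u_n = (\chi_n'\nabla_{\bar x}\phi, -\chi_n\Delta_{\bar x}\phi)$, and the extension by zero lies in $H_0(\curl,\Omega)$ since $u_n$ has zero tangential trace. The constancy of $\phi$ at the boundary is what makes the extension of $\phi\chi_n$, and hence the potential picture, consistent (equivalently, $u_n\cdot\nu=\partial_\tau\phi\,\chi_n = 0$, so $u_n\in H^1_0$-type and the extension by zero stays divergence free in $\Omega$).

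I would then compute the norms. We have $\norma{u_n}^2=\norma{\nabla\phi}_D^2 = \mu\norma{\phi-\bar\phi}^2$-type quantities, and
\[
\norma{\curl u_n}^2 = \norma{\chi_n'}^2\norma{\nabla\phi}_D^2 + \norma{\Delta\phi}_D^2 = O(L_n^{-2}) + \mu^2\norma{\phi}_D^2 .
\]
Using $\norma{\nabla\phi}_D^2=\mu\norma{\phi}_D^2$, which follows from integration by parts with the Neumann condition, the ratio tends to $\mu$. Hence $\omega_e\le\mu_{1c}(D)$.

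The main obstacle is the justification of the divergence-free extension and the Rayleigh identity when $\phi\not\perp 1$ is not automatic. I would check that Neumann eigenfunctions with $\mu>0$ integrate to zero, so $\int\Delta\phi\,\phi = -\norma{\nabla\phi}^2$ holds, and that the normal trace of $u_n$ vanishes, namely $\partial_\tau\phi=0$ because $\phi$ is constant on $\p D$. The latter uses the hypothesis and the Lipschitz regularity.

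For the disk, I would separate variables. An eigenfunction constant on the boundary must be radial, for instance by integrating the angular Fourier modes. Then $J_0'(\sqrt\mu R)=0$, i.e.\ $J_1(\sqrt\mu R)=0$, which gives $\mu=j_{1,1}^2/R^2$.
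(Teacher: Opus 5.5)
Your proposal is correct and follows essentially the same route as the paper: both use the test fields $\curl(\phi\,\chi_n\be_3)=\chi_n\nabla^\perp_{\bar x}\phi$. In both, the Neumann condition kills the tangential trace and the constancy of $\phi$ on $\p D$ kills the normal trace, so the zero extension lies in $X_{N0}(\Omega)$, and the Rayleigh quotient is then computed using $\norma{\nabla\phi}_D^2=\mu\norma{\phi}_D^2$. Your disjoint escaping sub-cylinders and Bessel argument for the disk go slightly beyond the paper, which only asserts the disk value. The aside about replacing $\phi$ by $\phi-c$ is garbled but harmless, since $\curl((\phi-c)\chi_n\be_3)=\curl(\phi\chi_n\be_3)$ anyway, and mean zero is not needed for the integration-by-parts identity.
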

\begin{proof}
We will prove that $\inf \sigma_e(\cM) \leq \mu_{1c}(D)$ by exhibiting a Weyl sequence $(u_n)_n \subset X_{N0}(\Omega)$ with $\norma{u_n} = 1$, $n \in \N$, $u_n \rightharpoonup 0$ as $n \to +\infty$, and $\norma{\curl u_n}^2 \to \mu_{1c}(D)$ as $n \to +\infty$. Let $w$ be the solution to the following boundary value problem
\[
\begin{cases}
-\Delta w - \mu_{1c} w= 0, \quad &\textup{in $D$}, \\
w = \:\text{const}, \quad &\textup{on $\p D$}, \\
\frac{\p w}{\p \nu} = 0, \quad &\textup{on $\p D$},
\end{cases}
\]
A solution to such problem exists by assumption. We may assume without loss of generality that $\norma{w}_{L^2(D)} = 1$. Note that the gradient of $w$ is identically zero on the boundary $\p D$. Now define 
\[
u_n(\bar{x},x_3) = \curl ( w(\bar{x}) \rho_n(x_3) \be_3) = (\nabla_{\bar{x}} w \times \be_3) \rho_n(x_3)
\]
for all $n \in \N$, where $\rho_n(x_3) = \rho(x_3/a_n)$, and $\rho$ is the usual cut-off function in $(0,1)$ being equal to 1 in $(\frac13, \frac23)$. Note that both the tangential and the normal components of $u_n$ are vanishing on the boundary of the cylinder $C_n$. Extend then $u_n$ by zero to the whole of $\Omega$: the resulting vector field has null divergence in the whole of $\Omega$, it is identically zero on the boundary $\p \Omega$ and therefore it lies in $H_0(\curl, \Omega) \cap H(\Div0, \Omega)$. \\
It is clear that $\norma{u_n}^2_{L^2(\Omega)^3} = A \mu_{1c} a_n$ as $n \to +\infty$, and a direct computation as in the proof of Theorem \ref{thm:essspecslablike} shows that
\[
\begin{split}
\norma{\curl u_n}^2_{L^2(\Omega)^3} &= \int_\Omega |\nabla_{\bar{x}} w|^2 |\rho'_n|^2 dx + \int_\Omega |\Delta_{\bar{x}} w|^2 (\rho_n)^2 dx \\
&= o(a_n) + \mu_{1c}^2 \int_\Omega |w|^2 \rho_n^2 = A a_n \mu_{1c}^2 + o(a_n)
\end{split}
\]
and therefore
\[
\frac{\norma{\curl u_n}^2_{L^2(\Omega)^3}}{\norma{u_n}^2} \to \mu_{1c}
\]
as $n \to + \infty$. We conclude that $\inf \sigma_e (\cM) \leq \mu_{1c}$. 
\end{proof}

\begin{rem}
In view of the recent results by Colbrook and Stepaniants \cite{colbrook2026}, and Cao-Labora and de Dios Pont \cite{caolabora2026}, apart from the well-known case of the disk, there are many bounded domains in $\R^2$ admitting non-constant Neumann Laplacian eigenfunctions that are constant at the boundary; these domains are counterexamples to Schiffer's conjecture. Interestingly, they are not convex, even though they have a large number of axes of symmetry. 
\end{rem}

\section{Horn-shaped domains} \label{sec:filledhorn}
Let $\rho \in C^{1,1}(1, \infty)$ be a decreasing function with $\rho(z) \to 0$ and assume that
\[
{\rm ess} \liminf_{z\to +\infty} \: [\rho'(z)^2 - \rho(z) \rho''(z)] > - 1.
\] 
Let 
\begin{equation}\label{eq:Horn}
\Omega= \{ (\bar{x}, z) \in \R^2 \times (1, \infty) : |\bar{x}| < \rho(z) \},
\end{equation}
where $\bar{x} = (x_1, x_2)$. 

We will use the notation $r = |\bar{x}|$, $\be_r = (\cos \theta, \sin \theta, 0)$, $\be_\theta = (- \sin \theta, \cos \theta, 0)$ and $\be_z = (0,0,1)$, for $\theta \in [0, 2 \pi)$. Further define the lateral surface of $\Omega$ as
\[
\Gamma = \{ (\rho(z) \cos\theta, \rho(z) \sin\theta, z) : z > 1, \theta \in [0, 2 \pi) \}.
\]
For any $Z > 1$, define
\begin{equation}\label{eq:OmZ}
\Omega^Z = \Omega \cap \{ (\bar{x}, z) \in \R^3 : z > Z \}, \qquad S_z = \{ \bar{x} \in \R^2 : |\bar{x}| < \rho(z) \}.
\end{equation}

\subsection{Gaffney, Poincaré and Maxwell-Poincaré inequalities}
We first recall that in any $C^{1,1}$ bounded domain $\Omega$ of $\R^3$ with positive mean curvature, the strong Gaffney inequality \eqref{eq:strongGaff} holds, thus implying that $X_N(\Omega) \hookrightarrow H^1(\Omega)^3$.  Our aim will be to prove a rescaled asymptotic Maxwell-Gaffney inequality in the horn $\Omega$, that is 
\[
\norma{w}^2_{L^2(\Omega)^3} \leq C \rho(Z)^2 ( \norma{\curl w}^2_{L^2(\Omega)^3} + \norma{\Div w}^2_{L^2(\Omega)} ),
\]
for all $w \in X_N(\Omega)$ having support in $\Omega^Z$. 

The main fundamental observation is that the horn $\Omega$ has eventually positive mean curvature.
\begin{lemma}
The sum of the principal curvatures of the boundary of $\Omega$ is given by
\[
2 \cH(z) = \kappa_1(z) + \kappa_2(z) = \frac{1 + \rho'(z)^2 - \rho(z) \rho''(z)}{\rho(z) (1 + \rho'(z)^2)^{3/2}}
\]
for all $z > 1$.
\end{lemma}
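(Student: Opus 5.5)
The surface $\Gamma$ is a surface of revolution, so I would compute its principal curvatures directly from a parametrization. Parametrize $\Gamma$ by
\[
X(z,\theta) = (\rho(z)\cos\theta, \rho(z)\sin\theta, z) = \rho(z)\be_r + z\be_z, \qquad z>1,\ \theta\in[0,2\pi).
\]
The tangent vectors are $X_z = \rho'\be_r + \be_z$ and $X_\theta = \rho\,\be_\theta$. The outward unit normal (pointing away from the axis) is
\[
\nu = (1+\rho'^2)^{-1/2}\,(\be_r - \rho'\be_z),
\]
which is orthogonal to both tangent vectors.

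The first fundamental form is diagonal: $E = 1+\rho'^2$, $F=0$, $G=\rho^2$. For the second fundamental form I will use the sign convention that makes a circular cylinder of radius $R$ have mean curvature $1/R>0$ with the outward normal; this is the convention in which Reilly's identity $\int|Du|^2 = \int(|\curl u|^2+|\Div u|^2) - 2\int\cH (u\cdot\nu)^2$ holds. Concretely, I take $L = -X_{zz}\cdot\nu$, $M=-X_{z\theta}\cdot\nu$, $N=-X_{\theta\theta}\cdot\nu$. Since $X_{zz} = \rho''\be_r$, $X_{z\theta} = \rho'\be_\theta$ and $X_{\theta\theta} = -\rho\be_r$, this gives
\[
L = -\frac{\rho''}{(1+\rho'^2)^{1/2}}, \qquad M = 0, \qquad N = \frac{\rho}{(1+\rho'^2)^{1/2}}.
\]
Both forms are diagonal, so the principal curvatures are
\[
\kappa_1 = \frac{L}{E} = -\frac{\rho''}{(1+\rho'^2)^{3/2}}, \qquad \kappa_2 = \frac{N}{G} = \frac{1}{\rho(1+\rho'^2)^{1/2}}.
\]
Putting them over the common denominator $\rho(1+\rho'^2)^{3/2}$ yields
\[
\kappa_1+\kappa_2 = \frac{1+\rho'^2-\rho\rho''}{\rho(1+\rho'^2)^{3/2}},
\]
which is the claimed formula.

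There are two points that need care. The first is the sign convention. The check I would use is that for $\rho\equiv R$ the formula gives $1/R$, matching the convention required by the Reilly identity quoted in the preliminaries; the rest of the computation is routine. The second is regularity: since $\rho\in C^{1,1}$, $\rho''$ exists only almost everywhere, so the identity holds for a.e. $z$. This is consistent with how it is used later. Under assumption \eqref{eq:rhoprime} the numerator is eventually bounded below by a positive constant almost everywhere, so $\cH>0$ for large $z$, and the boundary integral in Reilly's formula only requires $\cH\in L^\infty_{\rm loc}$.
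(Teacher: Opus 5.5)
Your proof is correct. It shares its starting point with the paper (the same parametrization and the same outward normal) but computes the curvature by a different mechanism. The paper never writes down the fundamental forms. It extends $\nu$ off $\Gamma$ by the same expression $N=(\be_r-\rho'(z)\be_z)/\sqrt{1+\rho'(z)^2}$, which is independent of $r$, and computes $\Div N=\frac{1}{r}\p_r(rN_r)+\p_z N_z$ in cylindrical coordinates. It then evaluates at $r=\rho(z)$ and invokes $\Div N=\Div_{\p\Omega}\nu=2\cH$ on $\Gamma$.

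That route is shorter and fixes the sign convention automatically, since it uses the tangential divergence of the outward normal, so convex boundaries get $\cH\ge 0$. It does tacitly rely on $|N|\equiv 1$ near $\Gamma$: this gives $\nu\cdot\p_\nu N=\frac12\p_\nu|N|^2=0$, so the normal derivative contributes nothing. The paper does not spell this out.

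Your route through $E,F,G$ and $L,M,N$ requires fixing the sign convention by hand, which you do and check on the cylinder. In exchange it gives the two principal curvatures separately, $\kappa_1=-\rho''/(1+\rho'^2)^{3/2}$ and $\kappa_2=1/(\rho\sqrt{1+\rho'^2})$. This shows that hypothesis \eqref{eq:rhoprime} is exactly what lets the positive parallel curvature $\kappa_2$, of order $1/\rho$, dominate the possibly negative meridional curvature $\kappa_1$.

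Your remark that for $\rho\in C^{1,1}$ the identity holds only for a.e.\ $z$ is a correct refinement of the statement's ``for all $z>1$''. It is also consistent with the essential $\liminf$ in the hypothesis.
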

\begin{proof}
We parametrise the lateral surface $\Gamma$ via $\Psi(\theta, z) = \rho(z)\be_r(\theta) + z \be_z$ for $\theta \in [0, 2 \pi)$, $z > 1$. The (non-normalized) tangent vectors to $\Gamma$ are given by
$\p_\theta \Psi = \rho \be_\theta$, $\p_z \Psi = \rho' \be_r + \be_z$, so that the exterior unit normal is
\[
\nu(\theta,z ) = \frac{\be_r - \rho'(z) \be_z}{\sqrt{1 + \rho'(z)^2}},
\]
$ \theta \in [0, 2 \pi)$, $z > 1$. 

Define $N(r, \theta, z) = \frac{\be_r - \rho'(z) \be_z}{\sqrt{1 + \rho'(z)^2}}$, for $ \theta \in [0, 2 \pi)$, $z > 1$ and $r$ near $\rho(z)$. Then 
\[
\Div N = \frac{1}{r} \p_r (r N_r) + \p_z N_z = \frac{1}{r \sqrt{1 + (\rho')^2}} - \frac{\rho''}{(1 + (\rho')^2)^{3/2}}
\]
Since $\Div N = \Div_{\partial \Omega} \nu = 2 \cH$ on $\partial \Omega$, the proof is complete.
\end{proof}

As a consequence of the previous lemma we see that as $z \to + \infty$, the mean curvature is positive, as $1 + \rho'(z)^2 - \rho''(z)\rho(z)$ is eventually positive.\\[0.1cm]

Before proceeding, let us note that: \\
(i) if $u\in H^1(\Omega)^3$, $u \times \nu = 0$ on $\p \Omega$ is equivalent to $u_\theta = 0$ and $u_z + \rho'(z)u_r = 0$ at $r = \rho(z)$. \\
(ii) if $u\in H^1(\Omega)^3$, $u(\cdot, z) \in H^1(S_z)^3$ for almost all $z \in (1, \infty)$, hence the trace of $u$ on $\partial S_z$ is well-defined for a.a. $z \in (1, \infty)$. 

\begin{prop} \label{prop:Poincare}
Let $D$ be the unit disk in $\R^2$. Let $P > 0$ be given. Then there exists a positive constant $C_p$ such that for all $p \in [-P, P]$ an every $u \in H^1(D)^3$ satisfying the boundary conditions
\begin{equation}\label{eq:bc}
\begin{cases}
u_\theta = 0, \quad &\textup{on $\p D$}, \\
u_z + p u_r = 0, \quad & \textup{on $\p D$}, 
\end{cases}
\end{equation}
the following inequality holds
\begin{equation}\label{eq:Poincare1}
\norma{u}_{L^2(D)^3} \leq C_p \norma{\nabla u}_{L^2(D)^3}.
\end{equation}
\end{prop}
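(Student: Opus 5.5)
Since the boundary conditions \eqref{eq:bc} do not annihilate constant vector fields in general, we first determine which constants survive. A constant field $u = (a_1,a_2,a_3)$ in Cartesian components has $u_\theta = -a_1\sin\theta + a_2\cos\theta$ and $u_r = a_1\cos\theta + a_2\sin\theta$ on $\p D$. The condition $u_\theta = 0$ for all $\theta$ forces $a_1 = a_2 = 0$, hence $u_r = 0$, and then $u_z + p u_r = a_3 = 0$. So for every $p$ the only constant field satisfying \eqref{eq:bc} is zero. This is the key structural fact; the inequality should then follow by a compactness (Rellich) argument, made uniform in $p \in [-P,P]$.

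We argue by contradiction. Suppose there are $p_n \in [-P,P]$ and $u_n \in H^1(D)^3$ satisfying \eqref{eq:bc} with $p = p_n$, $\norma{u_n}_{L^2(D)^3} = 1$ and $\norma{\nabla u_n}_{L^2(D)^3} \to 0$. Then $(u_n)$ is bounded in $H^1(D)^3$. Passing to a subsequence, we get $p_n \to p_* \in [-P,P]$, $u_n \rightharpoonup u$ weakly in $H^1$, and $u_n \to u$ strongly in $L^2(D)^3$ by Rellich, so $\norma{u} = 1$. By weak lower semicontinuity $\nabla u = 0$, so $u$ is constant on the connected set $D$.

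It remains to pass to the limit in the boundary conditions. The trace map $H^1(D) \to L^2(\p D)$ is compact (it factors through $H^{1/2}(\p D)$), so the traces converge: $u_n|_{\p D} \to u|_{\p D}$ strongly in $L^2(\p D)^3$. Multiplication by the smooth functions $\cos\theta$, $\sin\theta$ preserves this convergence, so $(u_n)_\theta \to u_\theta$ and $(u_n)_r \to u_r$ in $L^2(\p D)$. Since $p_n \to p_*$ and $(u_n)_r$ is bounded in $L^2(\p D)$, also $(u_n)_z + p_n (u_n)_r \to u_z + p_* u_r$. Hence the constant $u$ satisfies \eqref{eq:bc} with $p = p_*$, and by the first paragraph $u = 0$. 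This contradicts $\norma{u} = 1$.

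The only delicate point is the uniformity in $p$, which is exactly why the compactness of $[-P,P]$ is used. This step handles it: the coefficient $p_n$ multiplies a strongly convergent trace, so the limit boundary condition holds. No explicit constant is claimed; $C_p$ depends only on $P$, although in fact the argument suggests the bound may hold uniformly for all $p\in\R$, since the constant-field obstruction vanishes independently of $p$.
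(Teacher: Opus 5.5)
Your proof is correct and follows the paper's argument: a normalized contradiction sequence, compactness giving a constant limit, and the fact that \eqref{eq:bc} admits no nonzero constants; the only difference is that you use Rellich plus weak lower semicontinuity where the paper uses the Poincar\'e--Wirtinger inequality and continuity of the trace. Your closing remark about uniformity for all $p\in\R$ is false, however: $u=\mathbf{e}_z-p^{-1}(x_1,x_2,0)$ satisfies \eqref{eq:bc} with $\norma{u}_{L^2(D)^3}\ge\sqrt{\pi}$ and $\norma{\nabla u}_{L^2(D)^3}=\sqrt{2\pi}/|p|$, so the best constant grows at least like $|p|/\sqrt{2}$. In your argument, $|p_n|\to\infty$ only yields $u_r=0$ in the limit and does not exclude the constant $\mathbf{e}_z$, which is exactly why $P<\infty$ is needed.
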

\begin{proof}
Assume for a contradiction that there exists a sequence $(p_n)_n \subset [-P,P]$ and $(u_n)_n \subset H^1(D)^3$ satisfying \eqref{eq:bc} such that $\norma{u_n}_{L^2(D)^3} = 1$, $n \in \N$, and $\norma{\nabla u_n}_{L^2(D)^3} \to 0$ as $n \to + \infty$. Up to extracting a subsequence, we may assume that $p_n \to p \in [-P, P]$ as $n \to + \infty$. Define
\[
c_n = \frac{1}{\pi} \int_D u_n(x) \, dx, \qquad n \in \N.
\]
The standard Poincaré inequality implies that
\[
\norma{u_n - c_n}_{L^2(D)^3} \leq C_{p} \norma{\nabla u_n}_{L^2(D)^3} \to 0,
\]
as $n \to + \infty$. Since $|c_n| \leq \pi^{-1/2}$ for all $n$, up to a subsequence we may assume that $c_n \to c$. Thus, $u_n \to c =
(c^{(1)},c^{(2)},c^{(3)})$ strongly in $H^1(D)^3$ as $n \to +\infty$. By continuity of the trace map from $H^1(\p D)^3$ to $L^2(\p D)^3$, $u_n|_{\p D} \to c$ in $L^2(\p D)^3$, hence (possibly passing to a subsequence) almost everywhere on $\p D$. From \eqref{eq:bc} we conclude that
\[
\begin{cases}
- c^{(1)} \sin \theta + c^{(2)} \cos \theta = 0, \quad &\textup{for almost all $\theta \in [0, 2\pi)$}, \\
c^{(3)} + p (c^{(1)} \cos \theta + c^{(2)} \sin \theta) = 0, \quad &\textup{for almost all $\theta \in [0, 2\pi)$}.
\end{cases}
\]
The first equation implies that $c^{(1)} = c^{(2)} = 0$, and then the second equation entails $c^{(3)} = 0$. We conclude that $c = 0$, a contradiction to $\norma{u_n}_{L^2(D)^3} = 1$, $n \in \N$.
\end{proof}

\begin{corollary}
Let $D_R = B_{\R^2}(0, R)$. With $P$ and $p$ as in Proposition \ref{prop:Poincare}, there exists a constant $C_p > 0$ independent of $R$ such that
\begin{equation} \label{eq:rescaledPoinc}
\norma{u}_{L^2(D_R)^3}^2 \leq C_p^2 R^2 \norma{\nabla u}_{L^2(D_R)^3}^2,
\end{equation}
for all $u \in H^1(D_R)^3$ satisfying \eqref{eq:bc}.
\end{corollary}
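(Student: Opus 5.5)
The plan is a scaling argument from the unit disk. Fix $R>0$ and $u \in H^1(D_R)^3$ satisfying \eqref{eq:bc} on $\p D_R$, and define the rescaled field $v(y) = u(Ry)$ for $y \in D$.

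\emph{Step 1: $v$ is admissible on the unit disk.} Clearly $v \in H^1(D)^3$. The boundary conditions \eqref{eq:bc} on $\p D$ are inherited from those on $\p D_R$. This is because the polar components satisfy $v_r(1,\theta) = u_r(R,\theta)$, $v_\theta(1,\theta)=u_\theta(R,\theta)$ and $v_z(1,\theta)=u_z(R,\theta)$. Here it matters that the frame $\be_r,\be_\theta,\be_z$ depends only on $\theta$, not on $r$. Moreover, $p$ is a fixed scalar, unchanged by the dilation. The trace of $v$ on $\p D$ is the pullback of the trace of $u$ on $\p D_R$, so the conditions hold in the trace sense as well.

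\emph{Step 2: apply Proposition \ref{prop:Poincare} and change variables.} Proposition \ref{prop:Poincare} gives $\norma{v}_{L^2(D)^3}\le C_p\norma{\nabla v}_{L^2(D)^3}$, with $C_p$ uniform in $p\in[-P,P]$. The change of variables $x = Ry$ yields
\[
\norma{v}^2_{L^2(D)^3} = R^{-2}\norma{u}^2_{L^2(D_R)^3}, \qquad \norma{\nabla v}^2_{L^2(D)^3} = R^{2}\,R^{-2}\norma{\nabla u}^2_{L^2(D_R)^3} = \norma{\nabla u}^2_{L^2(D_R)^3}.
\]
The first identity comes from the Jacobian $R^{-2}$ alone. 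In the second, the factor $R^2$ arises from $\nabla v(y) = R(\nabla u)(Ry)$ and cancels against the same Jacobian.

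\emph{Step 3: conclude.} Substituting into the Poincaré inequality gives
\[
R^{-2}\norma{u}^2_{L^2(D_R)^3}\le C_p^2\norma{\nabla u}^2_{L^2(D_R)^3},
\]
which is \eqref{eq:rescaledPoinc}. The constant is independent of $R$, since it is exactly the constant of Proposition \ref{prop:Poincare}.

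The only point requiring care is Step 1, namely that dilation preserves the mixed boundary condition with the same parameter $p$. This holds because the dilation is radial and acts only on the argument of $u$, not on its components, so no genuine obstacle arises.
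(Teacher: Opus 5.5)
Your proposal is correct and follows the same route as the paper, which simply applies Proposition \ref{prop:Poincare} to the rescaled field $\tilde{u}(x) = u(Rx)$ on the unit disk. Your explicit scaling computation and your check that the boundary conditions \eqref{eq:bc} are preserved with the same parameter $p$ fill in exactly the details the paper leaves implicit.
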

\begin{proof}
Just apply \eqref{eq:Poincare1} to the rescaled vector field $\tilde{u} = u( R x) \in H^1(D)^3$, for all $u \in H^1(D_R)$ as in the statement. 
\end{proof}

We can now prove an asymptotic Maxwell-Gaffney inequality in the horn.
\begin{theorem}\label{thm:MaxGaff}
There exists $Z_0 > 1$ and $C > 0$ such that 
\begin{equation}\label{eq:MaxGaff}
\norma{w}^2_{L^2(\Omega)^3} \leq C \rho(Z)^2 \big( \norma{\curl w}^2_{L^2(\Omega)^3} + \norma{\Div w}^2_{L^2(\Omega)} \big),
\end{equation}
for all $Z \geq Z_0$ and every $w \in X_N(\Omega)$ with $\supp w \subset \Omega^Z$.
\end{theorem}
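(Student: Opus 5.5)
The plan combines three ingredients: the strong Gaffney inequality \eqref{eq:strongGaff} on a mean-convex truncated domain, the rescaled sectional Poincaré inequality \eqref{eq:rescaledPoinc} applied slice by slice, and monotonicity of $\rho$.

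First, choose $Z_0$. By \eqref{eq:rhoprime} there are $\epsilon>0$ and $Z_1$ with $1+\rho'^2-\rho\rho''\ge \epsilon$ a.e. on $(Z_1,\infty)$. The curvature lemma then gives $\cH\ge 0$ on $\Gamma\cap\{z>Z_1\}$. Since $\rho$ is decreasing, bounded below by $0$ and $C^{1,1}$, we also have $\rho'(z)\to 0$ along the horn. I would rather not rely on that, so I take $P:=\sup_{z\ge Z_1}|\rho'(z)|$, which is finite because $\rho'$ is bounded on $[Z_1,\infty)$: $\rho'\le 0$ and $\rho''\le (1+\rho'^2)/\rho$ is used only locally, while for the global bound $C^{1,1}$ is read as $\rho'$ being Lipschitz on $[Z_1,\infty)$. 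Set $Z_0:=Z_1+1$.

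Second, prove the strong Gaffney inequality for $w\in X_N(\Omega)$ with $\supp w\subset\Omega^Z$, $Z\ge Z_0$. The domain $\Omega$ is unbounded and its boundary contains the flat cap $\{z=1\}$ together with edges, so Reilly's identity cannot be quoted directly. I would work on bounded truncations: since $\supp w\subset \Omega^Z$, the field $w$ vanishes near $\{z=Z\}$. Cut off at large height with $\chi_L(z)$, which yields error terms $\|\chi_L' w\|\to 0$. Then replace $\Omega^{Z-1/2}\cap\{z<2L\}$ by a $C^{1,1}$ bounded mean-convex domain $U_L$ that coincides with $\Omega$ on $\{Z\le z\le L+1\}$ and is capped with convex rounded ends; $\cH\ge0$ holds there since both the lateral piece and the caps are mean-convex. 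Applying \eqref{eq:strongGaff} on $U_L$ to $\chi_L w$, which lies in $X_N(U_L)$ and hence in $H^1_N(U_L)$ by Gaffney, gives
\[
\|D(\chi_L w)\|^2\le \|\curl(\chi_Lw)\|^2+\|\Div(\chi_Lw)\|^2 .
\]
Letting $L\to\infty$ gives $w\in H^1(\Omega)^3$ with $\|Dw\|^2\le\|\curl w\|^2+\|\Div w\|^2$. Constructing the smooth mean-convex capping near the junctions, so that $\partial U_L$ is $C^{1,1}$ with $\cH\ge 0$, is the step I expect to be the main technical obstacle. A fallback is a direct Reilly-type integration by parts in which the cap contributions vanish because $w=0$ there.

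Third, slice and use the sectional Poincaré inequality. By remark (ii), for a.e. $z>Z$ the restriction $w(\cdot,z)\in H^1(S_z)^3$, and by remark (i) it satisfies \eqref{eq:bc} with $p=\rho'(z)\in[-P,P]$. The corollary \eqref{eq:rescaledPoinc} with $R=\rho(z)\le\rho(Z)$ gives
\[
\|w(\cdot,z)\|^2_{L^2(S_z)}\le C_p^2\rho(Z)^2\|\nabla_{\bar x}w(\cdot,z)\|^2_{L^2(S_z)} .
\]
Integrating in $z$ and using $\|\nabla_{\bar x}w\|\le\|Dw\|$ together with the second step yields \eqref{eq:MaxGaff} with $C=C_p^2$.
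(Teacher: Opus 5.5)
Your proposal is correct and follows essentially the same route as the paper. Both arguments apply the strong Gaffney inequality (Reilly's identity with $\cH\ge 0$) on a bounded $C^{1,1}$ truncation, use the rescaled sectional Poincar\'e inequality with $R=\rho(z)\le\rho(Z)$, and finish with a $z$-cutoff limit; the paper takes that limit in the final inequality rather than in the Gaffney bound, which is immaterial. Your ``fallback'' is exactly what the paper tacitly uses: Reilly's boundary term vanishes on the rounded caps because $w=0$ there. Your uniform bound $P=\sup_{z\ge Z_1}|\rho'(z)|$, finite because $\rho'$ is Lipschitz and $\int_{Z_1}^\infty|\rho'|\,dz=\rho(Z_1)<\infty$, supplies a detail the paper leaves implicit.
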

\begin{proof}
Choose $Z_0 > 1$ so that $\cH(z) > 0$ for all $z \geq Z_0$. 
The proof is in two steps. 
\vspace{2mm}

\noindent \textbf{Step 1.} Let $w$ have compact support in $\overline{\Omega} \cap \{z > Z\}$; then we can assume that $\supp w \subset \{x \in \overline{\Omega} : Z < z < Z_{\max} \}$. Let $G \subset \Omega$ be a bounded $C^{1,1}$ domain obtained in the following way: $\p G$ agrees with the lateral boundary of $\Omega$ on a neighbourhood of the $\p \Omega$ with nontrivial intersection with the support of $w$, and the remaining part of $\p G$ lies inside an open set on which $w$ = 0. Such a domain is obtained by cutting the horn at levels $z < Z$ and $z > Z_{\max}$ below and above the support of $w$, and rounding the two resulting corners entirely inside regions where $w = 0$.

Then the restriction of $w$ to $G$ belongs to $X_N(G)$, hence to $H^1(G)^3$ by the  standard Gaffney inequality \eqref{eq:Gaff} in bounded $C^{1,1}$-domains. Thus $w (\cdot, z) \in H^1(S_z)^3$ for almost all $z > Z$. By \eqref{eq:rescaledPoinc}, 
\[
\int_{S_z} |w(\bar{x}, z)|^2 \, d\bar{x} \leq C_p \rho(z)^2 \int_{S_z} |\nabla_{\bar{x}} w(\bar{x}, z)|^2\, d\bar{x}
\]
for almost all $z > 1$. By integrating in $z$, and by recalling that $\rho(z) \leq \rho(Z)$ for all $z$ in the support of $w$, 
\[
\norma{w}_{L^2(\Omega)^3}^2 \leq C_p \rho(Z)^2 \norma{\nabla_{\bar{x}} w}_{L^2(\Omega)^3}^2 \leq C_p \rho(Z)^2 \big( \norma{\curl w}^2_{L^2(\Omega)^3} + \norma{\Div w}^2_{L^2(\Omega)}   \big)
\]
concluding the proof in the case of compactly supported $w$.\\

\textbf{Step 2.} For the general case, choose a smooth cut-off function $\chi \in C_c^\infty[0, \infty)$, $0 \leq \chi \leq 1$, $\chi(s) = 1$ for $0 \leq s \leq 1$, $\chi(s) = 0 $ for $s \geq 2$ and define $\chi_R(z) = \chi(z/R)$, $w_R = \chi_R w$, for $R > 1$. Then $w$ has compact support in the $z$-direction, and it belongs to $X_N(\Omega)$ as 
\[
\curl(w_R) = \chi_R \curl w + \nabla \chi_R \times w, \qquad \Div w_R = \chi_R \Div w + \nabla \chi_R \cdot w,
\]
and $\nu \times w_R = \chi_R (\nu \times w) = 0$ on $\p \Omega$. Moreover,
\[
\norma{\nabla \chi_R \times w}_{L^2(\Omega)^3} \leq \frac{C}{R} \norma{w}_{L^2(\Omega)^3}, \qquad \norma{\nabla \chi_R \cdot w}_{L^2(\Omega)^3} \leq \frac{C}{R} \norma{w}_{L^2(\Omega)^3},
\]
both tending to zero as $R \to + \infty$. Thus, 
\[
w_R \to w, \quad \curl w_R \to \curl w, \quad \Div w_R \to \Div w
\]
as $R \to +\infty$. Step 1 implies that
\[
\norma{w_R}_{L^2(\Omega)^3}^2 \leq C_p \rho(Z)^2 \big( \norma{\curl w_R}^2_{L^2(\Omega)^3} + \norma{\Div w_R}^2_{L^2(\Omega)}   \big)
\]
with a constant $C_p$ independent of $R$. The limit as $R \to + \infty$ of the previous inequality yields the claim.
\end{proof}

\subsection{Essential spectrum and Zhislin essential spectrum for Maxwell and the relative Hodge Laplacian}
Recall the definitions of the essential spectrum $\sigma_e$, the essential numerical range $W_e$, 
the Zhislin essential spectrum $\sigma_{e, \Zhe}$ and the Zhislin essential numerical range
$W_{e, \Zhe}$ from subsection \ref{subsec:specth}. 
By definition $\sigma_{e, \Zhe}(\cdot) \subset \sigma_e(\cdot)$ and $W_{e, \Zhe}(\cdot) \subset W_e(\cdot) $. 
We recall that $\Delta_{\rm Hod}^{\rm rel}$ is the nonnegative selfadjoint operator associated with the quadratic form
\[
\mathfrak{t}[u] = \int_\Omega \big( |\curl u|^2 + |\Div u|^2 \big)\, dx, \quad u \in X_N(\Omega).
\]
As $\dom(\cM) \subset \dom(\Delta_{\rm Hod}^{\rm rel})$ one has that $\inf \sigma_e(\Delta_{\rm Hod}^{\rm rel}) \leq \inf \sigma_e(\cM)$. Moreover, $\inf \sigma_e(\Delta_{\rm Hod}^{\rm rel}) = \inf W_e(\Delta_{\rm Hod}^{\rm rel})$. In the next Proposition we show that $W_e(\Delta_{\rm Hod}^{\rm rel})$ can actually be replaced by $W_{e, \Zhe}(\Delta_{\rm Hod}^{\rm rel})$.
\begin{prop}\label{prop:essnumran}
$W_{e, \Zhe}(\Delta_{\rm Hod}^{\rm rel}) = W_e(\Delta_{\rm Hod}^{\rm rel}) $.
\end{prop}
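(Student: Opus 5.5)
The inclusion $W_{e,\Zhe}(\Delta_{\rm Hod}^{\rm rel}) \subset W_e(\Delta_{\rm Hod}^{\rm rel})$ is immediate. A Zhislin sequence has supports escaping every ball, so it converges weakly to zero. We may work with the quadratic form $\mathfrak{t}$ on $X_N(\Omega)$ rather than with the operator: by the Remark following Lemma \ref{lemma:convexZENR}, the Zhislin range of the form coincides with that of the operator, and the analogous statement for $W_e$ holds by density of $\dom(\Delta_{\rm Hod}^{\rm rel})$ in the form domain. For the reverse inclusion we fix $\lambda \in W_e(\Delta_{\rm Hod}^{\rm rel})$ and a sequence $u_n \in X_N(\Omega)$ with $\norma{u_n}=1$, $u_n \rightharpoonup 0$ and $\mathfrak{t}[u_n] \to \lambda$. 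From it we will build a sequence supported in $\Omega \setminus \overline{B(0,R_n)}$ with $R_n \to \infty$ and with the same limiting form value.

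The construction is a cut-off argument. Take $\chi \in C^\infty(\R^3)$ with $0\le\chi\le 1$, $\chi = 0$ on $B(0,1)$ and $\chi = 1$ outside $B(0,2)$, and set $\chi_R(x) = \chi(x/R)$. Multiplication by $\chi_R$ preserves $X_N(\Omega)$, by the identities $\curl(\chi_R u) = \chi_R \curl u + \nabla\chi_R\times u$ and $\Div(\chi_R u) = \chi_R \Div u + \nabla\chi_R\cdot u$; the second identity is where we use that we are working with the Hodge form rather than $\cM$, since no divergence-free constraint has to be preserved. The key input is the local compactness of $X_N$ (Weck–Weber, recalled in Section \ref{sec:prelim}). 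On the bounded Lipschitz set $\Omega \cap B(0,2R)$, the sequence $u_n$ is bounded in $X_N$ because $\mathfrak{t}[u_n]$ is bounded, and it converges weakly to zero. Hence $u_n \to 0$ strongly in $L^2(\Omega\cap B(0,2R))^3$ for each fixed $R$. One caveat: $\Omega\cap B(0,2R)$ need not be Lipschitz. I would either choose radii $R$ for which the cut is transversal, or use an interior cut-off multiplied by $\chi$ to reduce to a bounded Lipschitz subdomain of $\Omega$ containing the relevant region. This regularity point is the most delicate step, and if the Weck compactness theorem is unavailable there I would fall back on the Gaffney-type argument of Theorem \ref{thm:MaxGaff}.

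A diagonal argument then lets us pick $R_n \to \infty$ slowly enough that $\norma{u_n}_{L^2(\Omega\cap B(0,2R_n))} \to 0$. Set $v_n = \chi_{R_n} u_n$. Then $\norma{v_n} \to 1$, and $\supp v_n \subset \Omega\setminus B(0,R_n)$. Moreover
\[
\big| \mathfrak{t}[v_n]^{1/2} - \mathfrak{t}[\chi_{R_n}^{1/2}\,\cdot\,]\big|\text{-type errors} \le C R_n^{-1}\norma{u_n},
\]
and more concretely $\mathfrak{t}[v_n] = \int \chi_{R_n}^2(|\curl u_n|^2+|\Div u_n|^2) + O(R_n^{-1})$. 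The remaining step is to show that $\int (1-\chi_{R_n}^2)(|\curl u_n|^2+|\Div u_n|^2) \to 0$, and this is the main obstacle. Strong $L^2$ convergence of $u_n$ on compacts does not force the energy density to vanish there. I would handle it in the IMS style. Split $u_n = \chi_{R_n} u_n + \tilde\chi_{R_n} u_n$ with $\chi^2 + \tilde\chi^2 = 1$. The IMS localisation formula gives
\[
\mathfrak{t}[u_n] = \mathfrak{t}[\chi u_n] + \mathfrak{t}[\tilde\chi u_n] - \int (|\nabla\chi|^2+|\nabla\tilde\chi|^2)|u_n|^2,
\]
which is valid because both $\curl$ and $\Div$ are first order. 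The error term is $O(R_n^{-2})$, so we get $\mathfrak{t}[\chi_{R_n}u_n] \le \lambda + o(1)$. Consequently $\limsup \mathfrak{t}[v_n]/\norma{v_n}^2 \le \lambda$.

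It remains to obtain the lower bound and the full interval. The normalised $v_n/\norma{v_n}$ form a Zhislin sequence whose limiting values (after passing to a subsequence) give some $\lambda' \le \lambda$ in $W_{e,\Zhe}$. On the other hand, $+\infty \in \hat\sigma_e$, and a Zhislin sequence with form values tending to $+\infty$ (or through any value above) is easily built, for instance by rapidly oscillating fields supported near infinity. By convexity (Lemma \ref{lemma:convexZENR}, applied in its form version) the set $W_{e,\Zhe}$ therefore contains $[\lambda',\infty) \ni \lambda$. Since $W_e = [\inf\sigma_e,\infty)$ is an interval, this yields equality.
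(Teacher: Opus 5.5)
Your proposal is correct and takes essentially the same route as the paper. You cut off a weakly null sequence at a slowly growing scale $R_n$, use local compactness of $X_N$ so that no $L^2$ mass is lost, bound the Hodge energy of the cut-off fields from above, and close with the convexity Lemma \ref{lemma:convexZENR}.

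The differences are minor. The paper uses $0\le\chi_{R_n}\le 1$ directly to get $\norma{\chi_{R_n}\curl u_n}^2+\norma{\chi_{R_n}\Div u_n}^2\le\norma{\curl u_n}^2+\norma{\Div u_n}^2$. So neither the IMS formula nor the vanishing of $\int(1-\chi_{R_n}^2)(|\curl u_n|^2+|\Div u_n|^2)$, which you call the main obstacle, is needed. In the other direction, you make explicit a point the paper leaves implicit: $W_{e,\Zhe}$ contains arbitrarily large values, and this is what turns ``convex with the same infimum'' into equality.
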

\begin{proof}
To keep the notation more readable, we will write $\Delta$ in place of $\Delta_{\rm Hod}^{\rm rel}$. Due to Lemma \ref{lemma:convexZENR}, it is enough to show that $\inf W_e(\Delta) = \inf W_{e,\Zhe}(\Delta)$. \\
Fix $\omega \in W_e(\Delta)$; we need to establish $\omega \in W_{e, \Zhe}(\Delta)$. By definition of $W_e(\Delta)$ there exists $(u_n)_n \subset X_{N}(\Omega)$, $\norma{u_n} = 1$, $u_n \rightharpoonup 0$, and $\norma{\curl u_n}^2 + \norma{\Div u_n}^2 - \omega \to 0$ as $n \to +\infty$. We immediately note that $\norma{u_n}_{L^2(K)^3} \to 0$ in each precompact Lipschitz open subset of $\Omega$, as a consequence of the compactness of the embedding of $X_{N}(K)$ in $L^2(K)^3$ for all $K$. In particular, for each fixed $R> 0$, 
recalling the notation $\Omega^R$ (see \eqref{eq:OmZ}), we have $\norma{u_n}_{L^2(\Omega^R)^3} \to 1$ as $n \to + \infty$. \\[0.1cm]
Choose a smooth cut-off function $\chi \in C_c^\infty(0, \infty)$, $0 \leq \chi \leq 1$, $\chi(s) = 1$ for $s \geq 2 $, $\chi(s) = 0 $ for $0 \leq s \leq 1$ and define $\chi_R(z) = \chi(z/R)$. Further set $u_{n,R}= \chi_R u_n$, for $R > 1$. Then 
\[
\begin{split}
\curl u_{n,R} = \chi_R \curl u_n + \nabla \chi_R \times u_n, \\
\Div u_{n,R} = \chi_R \Div u_n + \nabla \chi_R \cdot u_n,
\end{split}
\]
for all $n$, and $\norma{u_{n,R}}_{L^2(\Omega)^3} = (1 - o(1)) \norma{u_n}_{L^2(\Omega)^3}$, as $n \to + \infty$. Select now a sequence of positive real numbers $(R_n)_n$, $R_n \to + \infty$, such that, up to taking a subsequence of $(u_n)_n$, $\norma{u_{n,R_n}}_{L^2(\Omega)^3} = (1 - o(1)) \norma{u_n}_{L^2(\Omega)^3}$ as $n \to + \infty$. Call this new sequence 
\[
w_n = u_{n, R_n}, \quad n \in \N.
\]
We now note that $|\nabla \chi_{R_n}| \leq C/R_n \to 0$ for all $x \in \Omega$, implying that the error terms $\nabla \chi_{R_n} \times u_n$ and $\nabla \chi_{R_n} \cdot u_n$ are vanishing as $n\to + \infty$. Therefore, $\curl w_n - \chi_{R_n} \curl u_n \to 0$ and $\Div w_n - \chi_{R_n} \Div u_n \to 0$ as $n \to +\infty$. We have proved that
\begin{itemize}
\item $\norma{w_n}_{L^2(\Omega)^3} = (1 - o(1)) \norma{u_n}_{L^2(\Omega)^3}$ as $n \to +\infty$; 
\item $\norma{\curl w_n}^2_{L^2(\Omega)^3} + \norma{\Div w_n}_{L^2(\Omega)^3}^2= \norma{\chi_{R_n} \curl u_n}^2_{L^2(\Omega)^3} + \norma{\chi_{R_n} \Div u_n}^2_{L^2(\Omega)^3} + o(1)$ as $n \to +\infty$; 
\end{itemize}
We conclude that 
\[
\begin{split}
\frac{\norma{\curl w_n}^2_{L^2(\Omega)^3} + \norma{\Div w_n}_{L^2(\Omega)^3}^2}{\norma{w_n}^2_{L^2(\Omega)^3}} &= \frac{\norma{\chi_{R_n}\curl u_n}^2_{L^2(\Omega)^3} + \norma{\chi_{R_n} \Div u_n}^2_{L^2(\Omega)^3}+ o(1)}{\norma{u_n \chi_{R_n}}^2_{L^2(\Omega)^3}} \\
&\leq \frac{\norma{\curl u_n}^2_{L^2(\Omega)^3} + \norma{\Div u_n}^2_{L^2(\Omega)^3} + o(1)}{(1 - o(1)) \norma{u_n}^2_{L^2(\Omega)^3}} \to \omega 
\end{split}
\]
By choosing $\omega = \inf W_e(\Delta)$, we conclude that
\[
\inf W_{e, \Zhe}(\Delta) \leq \inf W_e(\Delta);
\]
we already know that $W_{e, \Zhe}(\Delta) \subset W_e(\Delta)$, so now $\inf W_{e, \Zhe}(\Delta) = \inf W_e(\Delta)$. By Lemma \ref{lemma:convexZENR} the proof is complete.
\end{proof}

\subsection{Compactness of the resolvent}
Let $\Omega$ be as in \eqref{eq:Horn}, and $\omega_e = \min \sigma_e(\cM)$ in the Hilbert space $H(\Div 0, \Omega)$. 

\begin{theorem}\label{thm:compactres}
$\sigma_e(\cM) = \emptyset$, or equivalently, the resolvent of $\cM$ is compact as a bounded operator in $(H(\Div 0, \Omega), \norma{\cdot}_{L^2})$.
\end{theorem}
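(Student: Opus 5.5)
The strategy is to reduce to the relative Hodge Laplacian and use the asymptotic Maxwell--Gaffney inequality of Theorem~\ref{thm:MaxGaff} on Zhislin sequences.

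First, since $\cM$ and $\Delta_{\rm Hod}^{\rm rel}$ are nonnegative selfadjoint operators, it suffices to show $\inf \sigma_e(\Delta_{\rm Hod}^{\rm rel}) = +\infty$. Indeed, as noted before Proposition~\ref{prop:essnumran}, $\inf\sigma_e(\Delta_{\rm Hod}^{\rm rel}) \le \inf \sigma_e(\cM)$. This follows because $X_{N0}(\Omega)\subset X_N(\Omega)$ and the two quadratic forms coincide on $X_{N0}$. Concretely, any Weyl-type sequence $(u_n)\subset X_{N0}(\Omega)$ with $\norma{u_n}=1$, $u_n\rightharpoonup 0$ and $q_\cM[u_n]$ bounded is also admissible for $W_e(\Delta_{\rm Hod}^{\rm rel})$. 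Hence $\inf W_e(\cM)\ge \inf W_e(\Delta_{\rm Hod}^{\rm rel})$, and by the fact recalled in Subsection~\ref{subsec:specth}, $W_e = {\rm conv}(\hat\sigma_e)$ for both operators. So if $W_e(\Delta_{\rm Hod}^{\rm rel})$ reduces to $\{+\infty\}$, i.e.\ is empty as a subset of $\CC$, then $\sigma_e(\cM)=\emptyset$.

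Next, by Proposition~\ref{prop:essnumran}, $W_e(\Delta_{\rm Hod}^{\rm rel}) = W_{e,\Zhe}(\Delta_{\rm Hod}^{\rm rel})$, and by the Remark after Lemma~\ref{lemma:convexZENR} the latter equals the form version $W_{e,\Zhe}(\mathfrak t)$. Suppose $\omega \in W_{e,\Zhe}(\mathfrak t)$. Then there exist $w_n\in X_N(\Omega)$ with $\norma{w_n}=1$, $\supp w_n\subset \Omega\setminus \overline{B(0,R_n)}$ and $R_n\to\infty$, such that $\mathfrak t[w_n]\to\omega$. Since $\Omega$ is contained in $\{z>1\}$ with bounded cross-sections, $\supp w_n\subset \Omega^{Z_n}$ for some $Z_n\to\infty$ (take $Z_n$ with $Z_n^2+\rho(1)^2 \ge R_n^2$, say). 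Theorem~\ref{thm:MaxGaff} then gives, for $n$ large enough that $Z_n\ge Z_0$,
\[
1=\norma{w_n}^2 \le C\rho(Z_n)^2\,\mathfrak t[w_n].
\]
Thus $\mathfrak t[w_n]\ge (C\rho(Z_n)^2)^{-1}\to+\infty$ because $\rho(Z_n)\to 0$. This contradicts $\mathfrak t[w_n]\to\omega$, so $W_{e,\Zhe}(\mathfrak t)=\emptyset$ and therefore $\sigma_e(\cM)=\emptyset$. The equivalence with compactness of the resolvent is standard for selfadjoint operators.

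The delicate point is the passage between $W_e$ (operator domain, sequences converging weakly to zero) and its Zhislin form version. I would rely on Proposition~\ref{prop:essnumran} together with \cite[Prop.~4.5(ii)]{SBMMCT} rather than re-proving it, checking only that the cut-off sequences used there stay in $X_N(\Omega)$; multiplying by $\chi_R(z)$ preserves the tangential boundary condition.

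The compact embedding of $X_N(\Omega)$ in $L^2(\Omega)^3$ follows from the same estimate. Take a bounded sequence $(u_n)$ in $X_N(\Omega)$ and split $u=(1-\chi_Z)u+\chi_Z u$. The first piece is compact by the local Weck--Weber compactness on the bounded Lipschitz set $\Omega\setminus\Omega^{2Z}$. The second piece satisfies $\norma{\chi_Z u}^2\le C\rho(Z)^2\norma{u}_{X_N}^2(1+O(1/Z))$ by Theorem~\ref{thm:MaxGaff}. A diagonal argument then yields a Cauchy subsequence.
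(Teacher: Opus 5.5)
Your proof is correct and follows the paper's argument: reduce from $\cM$ to $\Delta_{\rm Hod}^{\rm rel}$, pass to Zhislin sequences via Proposition~\ref{prop:essnumran}, and let Theorem~\ref{thm:MaxGaff} force $\mathfrak t[w_n]\ge (C\rho(Z_n)^2)^{-1}\to\infty$. Your last paragraph on the compact embedding of $X_N(\Omega)$, which the paper does not write out, would in fact give compactness of the resolvent directly, without the Zhislin machinery. One slip: to get $\supp w_n\subset\Omega^{Z_n}$ from $\supp w_n\cap\overline{B(0,R_n)}=\emptyset$ you need $Z_n^2+\sup\rho^2\le R_n^2$ (e.g.\ $Z_n=(R_n^2-\sup\rho^2)^{1/2}$), not the reversed inequality you wrote.
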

\begin{proof}
We have already observed that $\inf \sigma_e(\Delta) \leq \inf \sigma_e(\cM)$. By Prop. \ref{prop:essnumran}, $\inf \sigma_e(\Delta) = \inf W_{e,\Zhe}(\Delta)$. It is therefore sufficient to show that $ \inf W_{e,\Zhe}(\Delta) = +\infty$. To this end, assume for a contradiction that there exists $\omega \in \R$, $\omega = \inf W_{e,\Zhe}(\Delta)$. By definition of $W_{e,\Zhe}(\Delta)$ there exists a sequence $(u_n)_n \subset X_{N}(\Omega)$, $\norma{u_n} =1$, $\supp u_n \subset \Omega \cap B(0, R_n)^C$ for some $R_n \to + \infty$, such that
\[
\lim_{n \to +\infty} \big(\norma{\curl u_n}^2_{L^2(\Omega)^3} + \norma{\Div u_n}^2_{L^2(\Omega)^3}\big) = \omega.
\]
Theorem \ref{thm:MaxGaff} implies that
\[
\lambda(Z) = \inf_{\substack{u \in X_{N}(\Omega)\\ \supp u \subset \Omega^Z}} \frac{\norma{\curl u}^2_{L^2(\Omega)^3} + \norma{\Div u}^2_{L^2(\Omega)^3}}{\norma{u}_{L^2(\Omega)^3}}\to \infty
\]
as $Z \to + \infty$. In particular,
\[
\lim_{n\to+\infty} \frac{\norma{\curl u_n}^2_{L^2(\Omega)^3} + \norma{\Div u_n}^2_{L^2(\Omega)^3}}{\norma{u_n}_{L^2(\Omega)^3}} =\infty,
\]
a contradiction. This shows that $W_{e,\Zhe}(\Delta) = \emptyset$; equivalently, $ \inf W_{e,\Zhe}(\Delta) = +\infty$.

%
\end{proof} 

\begin{rem}
Note that Theorem \ref{thm:compactres} holds without assumptions on the volume $|\Omega|$, that is indeed allowed to be infinite. For instance, one can check that $\rho(z) = 1/z^\alpha$ is an admissible choice of radius function in \eqref{eq:Horn}. However, $|\Omega| < \infty$ if and only if $\alpha > 1/2$.
\end{rem}

\section{The perforated horn}\label{section:7}
The compactness of the resolvent proved in Theorem \ref{thm:compactres} may mislead one to think that if the transverse sections of a horn-shaped domain have area that vanish as $z \to + \infty$, then the resolvent of $\cM$ is compact. While this might be true for simply connected sections, the conjecture dramatically fails as soon as the sections have holes that extend at infinity. The aim of this section is to provide a general strategy to give information about the essential spectrum of $\cM$ in such perforated horns via a dimension reduction argument. \\[0.2cm]

Let $\rho, \delta \in C^2(1,\infty)$, be positive monotonically decreasing functions with $0 < \delta(z) < \rho(z)$, $z\in (1,\infty)$, and $\rho(z), \delta(z) \to 0$ as $z \to + \infty$. Define
\[
\Omega = \{(\bar{x},z) \in \R^3 : z > 1, \delta(z) < |\bar{x}| < \rho(z), \: z \in (1,\infty) \}.
\]

We recall here the following important core property, that we will use in the sequel. 
\begin{theorem}\label{thm:coreprop}
Define 
\[
\cD_N(\Omega) := \{ u \in C^\infty(\ov{\Omega})^3 : u \in X_N(\Omega), \, \exists Z > 1, \: u(\bar{x}, z) = 0, \: z \geq Z \}.
\]
Then $\overline{\cD_N(\Omega)}^{X_N(\Omega)} = X_N(\Omega)$, with respect to the norm 
$$\norma{\cdot}_{X_N(\Omega)} = \big(\norma{\cdot}^2_{L^2(\Omega)^3} + \norma{\curl \cdot}^2_{L^2(\Omega)^3} + \norma{\Div \cdot}^2_{L^2(\Omega)} \big)^{\frac12}$$
\end{theorem}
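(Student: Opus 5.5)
The density statement splits into a truncation step at infinity and a regularisation step on bounded pieces.

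\emph{Truncation.} Given $u \in X_N(\Omega)$, take the cut-off $\chi_R(z)=\chi(z/R)$ from Step 2 of the proof of Theorem \ref{thm:MaxGaff}, with $\chi=1$ on $[0,1]$ and $\chi=0$ on $[2,\infty)$, and set $u_R=\chi_R u$. The identities
\[
\curl u_R=\chi_R\curl u+\nabla\chi_R\times u, \qquad \Div u_R=\chi_R\Div u+\nabla\chi_R\cdot u,
\]
together with $|\nabla\chi_R|\le C/R$, give $\norma{u-u_R}_{X_N(\Omega)}\to 0$ by dominated convergence. Since $\chi_R$ depends only on $z$, the tangential trace satisfies $\nu\times u_R=\chi_R(\nu\times u)=0$. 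We may therefore assume that $u$ vanishes for $z\ge Z$.

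\emph{Localisation near the bottom face.} For such $u$ the support lies in the bounded set $\Omega_Z=\Omega\cap\{z<Z\}$. The only boundary parts that matter are the two lateral surfaces $r=\delta(z)$ and $r=\rho(z)$ and the bottom face $\{z=1\}$. I plan to take a finite partition of unity $\{\phi_j\}$ subordinate to a covering of $\overline{\Omega_Z}$ by interior balls and boundary charts. Multiplying by $\phi_j$ preserves $X_N$ and the tangential boundary condition, by the same product rule as above. It is cleanest to observe that $u|_{\Omega_Z}$, extended by zero across $z=Z$, lies in $X_N(G)$ for a bounded Lipschitz domain $G$: cut at some $Z'>Z$ and round the top edge inside the region where $u=0$, exactly as in Step 1 of Theorem \ref{thm:MaxGaff}. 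The edges at $z=1$ cannot be rounded, so $G$ is only Lipschitz there; near the top face the boundary is $C^{1,1}$ since $\rho,\delta\in C^2$.

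\emph{Regularisation.} The key input is the classical density of $C^\infty(\overline{G})^3\cap X_N(G)$ in $X_N(G)$ for bounded Lipschitz domains $G$. In each chart this follows by flattening, or star-shaping, the boundary. One then uses the Piola-type transforms that preserve $\curl$ and $\Div$ structure, namely covariant pullback for the tangential condition and contravariant pullback for the divergence. Finally one applies a dilation towards the interior followed by mollification; this is the argument of Girault--Raviart and Amrouche--Bernardi--Dauge--Girault, recalled via \cite{MR1944792}. Applying this in $G$ gives smooth approximants. Cutting them off near the top by a further $z$-cut-off, which again costs nothing since $u=0$ there, yields elements of $\cD_N(\Omega)$.

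\emph{Main obstacle.} I expect the hardest step to be preserving both boundary conditions simultaneously in the regularisation. Mollification must keep $\nu\times u=0$ while $\Div u$ stays only in $L^2$, and this is exactly where the Remark warns that naive $C_c^\infty$ density fails. I would first try the dilation-plus-mollification argument for star-shaped Lipschitz patches. If the tangential trace condition is not kept exactly, I would correct it by subtracting $\nabla\varphi$, where $\varphi$ solves a Dirichlet problem in the patch; this keeps the approximant in $X_N$ while changing $\Div$ only by a small harmonic-type correction.
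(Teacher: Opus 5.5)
Your truncation step is fine and coincides with the paper's. The regularisation step, however, rests on a false statement: density of $C^\infty(\overline{G})^3\cap X_N(G)$ in $X_N(G)$ does \emph{not} hold for general bounded Lipschitz domains $G$.

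Since $C^\infty(\overline G)^3\subset H^1(G)^3$, the closure of such fields in $X_N(G)$ lies inside the closure of $H^1(G)^3\cap X_N(G)$. For a polyhedron, Costabel showed that $\norma{\curl u}^2+\norma{\Div u}^2=\norma{\nabla u}^2$ on $H^1(G)^3\cap X_N(G)$, so this space is closed in $X_N(G)$. If the polyhedron is non-convex, it is strictly smaller than $X_N(G)$. To see this, take $\varphi\in H^1_0(G)$ to be the Dirichlet singular function $r^{\pi/\omega}\sin(\pi\theta/\omega)$, cut off near a re-entrant edge of opening $\omega>\pi$. Then $\Delta\varphi\in L^2$, so $\nabla\varphi\in X_N(G)$, but $\nabla\varphi\notin H^1(G)^3$.

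This is also why dilation plus mollification cannot respect both constraints at once. Extending $u$ by zero keeps $\curl u\in L^2$ but adds the surface term $(\nu\cdot u)\,\mathrm{d}\sigma$ to the divergence. Dilating outward keeps $\Div u\in L^2$ but loses $\nu\times u=0$.

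Your proposed repair does not close the gap. For $\varphi\in H^1_0$ the field $\nabla\varphi$ has zero tangential trace, so subtracting it cannot cancel a nonzero $\nu\times v$; it only readjusts the divergence. Moreover, any correction that restores the boundary condition is in general no longer smooth up to $\p G$, which membership in $\cD_N(\Omega)$ requires.

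The missing ingredient is $H^1$-regularity. You need $X_N\hookrightarrow H^1$ on the bounded piece, i.e.\ a Gaffney inequality. After that it suffices to approximate within $\{u\in H^1:\nu\times u=0\}$, where the $H^1$-norm controls the $X_N$-norm. The paper gets this by working in a bounded smooth subdomain, where \eqref{eq:Gaff} is available.

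You correctly observe that the edges at $z=1$ cannot be rounded off, a point the paper's sketch passes over. But then you must use the specific geometry. The lateral surfaces are $C^2$. Since $\rho'(1)\le 0$ and $\delta'(1)\le 0$, both circular edges have dihedral angle strictly less than $\pi$, so no singular fields arise and $X_N$ is still locally contained in $H^1$ there. Your argument never invokes anything of this kind, and for a general Lipschitz $G$ the step you rely on fails.
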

\begin{proof}
We give here just a sketch of the proof. If $u \in X_N(\Omega)$ and $\chi_M \in C^\infty_c([1,M+1])$, $0 \leq \chi_M \leq 1$, $\chi_M = 1$ in $[1,M)$ then $\chi_M u \in X_N(\Omega)$ and $\chi_M u \to u$ in $X_N(\Omega)$ as $M \to + \infty$. This proves that 
$$X_{N,c}(\Omega) := \{ u \in X_N(\Omega) : \supp u \:\: \text{is compact in the $z$-direction}\, \}$$ is dense in $X_N(\Omega)$. \\
Choose now a bounded smooth open subset $\Omega_M$ of $\Omega$ containing the support of $\chi_M u$. It is classical that there exists a sequence of functions $(u_{M,j})_j \subset \cD_N(\Omega)$ such that $u_{M,j} \to \chi_M u$ in $X_N(\Omega)$ as $j \to + \infty$. \\
Fix a number $\eps > 0$ and choose $M$ sufficiently big so that $\norma{\chi_M u - u}_{X_N(\Omega)} \leq \eps$. Then
\[
\norma{u_{M,j} - u}_{X_N(\Omega)} \leq \norma{u_{M,j} - \chi_M u}_{X_N(\Omega)} + \eps;
\]
letting $j \to +\infty$ shows that $\norma{u_{M,j} - u}_{X_N(\Omega)} \leq \eps$, concluding the proof.
\end{proof}

In the sequel we use the notation $r = |\bar{x}|$. Since $\Omega$ has an infinite hole, each section $S_z = \{r \in (0,+\infty) : \delta(z) < r < \rho(z) \}$ admits a harmonic function $h^z: S_z \to \R$ that is constant at the boundary. We represent the continuous family of harmonic functions $h^z$ indexed in $z = (1,\infty)$ as $q = q(r,z)$, $q(\cdot, z ) = h^z(\cdot)$ It is convenient to introduce here the quantity
\begin{equation}\label{def:M}
M(z) := \log \frac{\rho(z)}{\delta(z)}.
\end{equation}
An explicit computation yields that the function $q$ is given by
\begin{equation}\label{def:q}
q(r,z) = \frac{\log r - \log \delta(z)}{M(z)}, \quad z > 1, r \in S_z.
\end{equation}
Note that $q(\cdot, z)$ monotonically increasing in $r$ for each $z > 1$, and $q(\rho(z), z) = 1$, $q(\delta(z),z) = 0$, $z > 1$, so that $\nabla q$ is normal to $\p \Omega$ on the lateral boundary $\Gamma$ of $\Omega$.

\begin{prop}\label{prop:Ua}
Let $a \in C^\infty_c(1, \infty)$ and define
\[
U_a(r, \theta, z) = a(z) \sqrt{\frac{M(z)}{2 \pi}} \nabla q(r,z), \]
for $(r,\theta,z) \in (0,\infty) \times [0, 2\pi) \times (1, \infty) : \delta(z) < r < \rho(z)$.
Then $U_a$ has the following properties:
\begin{enumerate}[label=(\roman*)]
\item $U_a \in H_0(\curl, \Omega)$;
\item With $U_a = R_a + Z_a$, $R_a$ being the radial component and $Z_a$ the longitudinal component of $U_a$, we have
\begin{equation}
\norma{R_a}_{L^2(\Omega)^3}^2 = \norma{a}^2_{L^2(1,\infty)}, \qquad \Div R_a = 0.
\end{equation}
\item With
\begin{equation}\label{p}
p(z) = \frac{M'(z)}{2 M(z)}, \quad z > 1,
\end{equation}
we have 
\[
\curl U_a(r,z)= \frac{a'(z) + p(z) a(z) }{\sqrt{2\pi M(z)}} \frac{1}{r} \be_\theta, \qquad \delta(z) < r < \rho(z), z > 1,
\]
and 
\[
\curl\curl U_a(r,z) = \frac{- a''(z) - p'(z) a(z) + p^2(z) a(z)}{\sqrt{2\pi M(z)}}\frac{1}{r} \be_r
\]
for $\delta(z) < r < \rho(z)$, $z > 1$.
\end{enumerate}
\end{prop}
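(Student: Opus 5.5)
The plan is a direct computation in cylindrical coordinates $(r,\theta,z)$, using the explicit harmonic function \eqref{def:q}. Set $f(z) = a(z)\sqrt{M(z)/(2\pi)}$, so that $U_a = f(z)\nabla q$. Since $\rho,\delta\in C^2$ and $\delta<\rho$, the function $M$ is $C^2$ and positive, so $f\in C^2_c(1,\infty)$. Differentiating \eqref{def:q} gives
\[
\p_r q = \frac{1}{rM}, \qquad \p_z q = -\frac{\hd M + (\log r-\log\delta)M'}{M^2}.
\]
Hence
\[
R_a = \frac{a(z)}{\sqrt{2\pi M(z)}}\,\frac{1}{r}\,\be_r, \qquad Z_a = f\,\p_z q\,\be_z .
\]

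\textbf{Step (ii).} In polar coordinates,
\[
\norma{R_a}^2 = \int_1^\infty \frac{a^2}{2\pi M}\int_0^{2\pi}\!\!\int_{\delta}^{\rho}\frac{1}{r^2}\,r\,\rd r\,\rd\theta\,\rd z = \int_1^\infty \frac{a^2}{M}\log\frac{\rho}{\delta}\,\rd z = \norma{a}^2_{L^2(1,\infty)},
\]
by \eqref{def:M}. Moreover $\Div R_a = \frac1r\p_r(r R_r) = 0$, since $rR_r$ does not depend on $r$.

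\textbf{Step (iii).} Because $\curl\nabla q = 0$, we have $\curl U_a = \nabla f\times\nabla q = f'(z)\,\be_z\times(\p_r q\,\be_r + \p_z q\,\be_z) = \frac{f'}{rM}\be_\theta$. A short computation gives
\[
\frac{f'}{M} = \frac{a'\sqrt{M} + aM'/(2\sqrt{M})}{\sqrt{2\pi}\,M} = \frac{a'+pa}{\sqrt{2\pi M}},
\]
which is the first formula. For a field $F_\theta(r,z)\be_\theta$ one has $\curl(F_\theta\be_\theta) = -\p_z F_\theta\,\be_r + \frac1r\p_r(rF_\theta)\,\be_z$. Here $rF_\theta$ is independent of $r$, so only the radial term survives. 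Differentiating $g(z) = (a'+pa)(2\pi M)^{-1/2}$ and using $M'/(2M) = p$ gives
\[
g' = \frac{a''+p'a+pa' - p(a'+pa)}{\sqrt{2\pi M}} = \frac{a''+p'a-p^2a}{\sqrt{2\pi M}},
\]
and hence $\curl\curl U_a = -\frac{g'}{r}\be_r$, as claimed. The $C^2$ regularity of $\rho,\delta$ is exactly what is needed for $p'$ to exist.

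\textbf{Step (i).} Since $a$ has compact support in $(1,\infty)$, $U_a$ is supported in a bounded portion $\{Z_1\le z\le Z_2\}$ of $\Omega$, away from the bottom face $z=1$. There $U_a$ is $C^1$ up to the lateral boundary, so $U_a$ and $\curl U_a$ are in $L^2$ (the factor $1/r$ is harmless because $r\ge\delta(Z_2)>0$). On both lateral components $\{r=\delta(z)\}$ and $\{r=\rho(z)\}$ the function $q$ is constant (equal to $0$ and $1$ respectively), so $\nabla q$ is parallel to $\nu$ and $\nu\times U_a = 0$ there. By the trace characterisation of $H_0(\curl,\Omega)$ for locally Lipschitz domains recalled in Section~\ref{sec:prelim}, this gives $U_a\in H_0(\curl,\Omega)$.

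Alternatively, if one prefers not to invoke that characterisation on an unbounded domain, one can write $U_a = \nabla(fq) - qf'\be_z$. Then $fq$ vanishes on the inner boundary and equals $f$ on the outer boundary, and one checks the tangential trace of each piece, or approximates on a bounded smooth truncation as in the proof of Theorem~\ref{thm:coreprop}.

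The only point needing care is this last step, namely justifying membership in $H_0(\curl,\Omega)$ rigorously rather than just formally; the computations in (ii) and (iii) are routine.
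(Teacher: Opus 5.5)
Your proposal is correct and follows the paper's proof essentially step by step. Both compute $R_a$ from $\p_r q = (Mr)^{-1}$, obtain $\curl U_a = \nabla\bigl(a\sqrt{M/2\pi}\bigr)\times\nabla q$, and get $\curl\curl U_a = -\p_z(\curl U_a\cdot\be_\theta)\,\be_r$; both deduce $U_a\in H_0(\curl,\Omega)$ from the vanishing tangential trace, since $q$ is constant on each lateral boundary component. Your remarks on the $C^2$ regularity needed for $p'$ and on $U_a$ vanishing near the bottom face $z=1$ are points the paper leaves implicit.
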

\begin{proof}
(i) Clearly $U_a$ is a smooth vector field with compact support in the $z$-direction, so it is enough to check that $\nu \times U_a = 0$ on $\Gamma$. But this is clear as $\nabla_{\Gamma} q = 0$, as $q$ is constant on $\Gamma$. Thus the tangential trace of $U_a$ is 0 on $\p\Omega$.\\[0.1cm]
(ii) Since $\partial_r q (r,z) = (M(z) r)^{-1}$, we immediately see that
\begin{equation}\label{eq:defRa}
R_a(r,z) = \frac{a(z)}{\sqrt{2 \pi M(z)}}\frac{1}{r} \be_r,
\end{equation}
and therefore 
\[
\norma{R_a}^2_{L^2(\Omega)^3} = \int_1^\infty \int_0^{2\pi} \int_{\delta(z)}^{\rho(z)} \frac{|a(z)|^2}{2 \pi M(z) r^2} r \; dr\,d\theta\,dz = \norma{a}^2_{L^2(1,\infty)},
\]
and
\[
\Div R_a(r,z) = \frac{1}{r} \p_r (r \cdot 1/r) \frac{a(z)}{\sqrt{2\pi M(z)}} = 0,
\]
identically for all $r,z$. \\[0.1cm]
(iii) We note that
\[
\begin{split}
\curl U_a &= \frac{1}{\sqrt{2\pi}} \nabla (a \sqrt{M}) \times \nabla q = \frac{1}{\sqrt{2\pi}} \bigg[\bigg(a' \sqrt{M} + \frac{M' a}{2\sqrt{M}}\bigg) \be_z\bigg] \times \nabla q \\
&= \frac{\p_r q}{\sqrt{2\pi}} \bigg[\bigg(a' \sqrt{M} + \frac{M' a}{2\sqrt{M}}\bigg)\bigg] \be_\theta = \frac{1}{\sqrt{2\pi}Mr} \bigg[\bigg(a' \sqrt{M} + \frac{M' a}{2\sqrt{M}}\bigg)\bigg] \be_\theta
\end{split}
\]
from which we deduce the formula in the statement. Similarly, for $\curl\curl U_a$ we have
\[
\begin{split}
\curl\curl U_a &=  - \p_z (\curl U_a \cdot \be_\theta) \be_r \\
&= - \bigg[ \frac{(a'' + p' a + p a') \sqrt{M} - (a' + p a ) \frac{M'}{2 \sqrt{M}}}{\sqrt{2\pi} M r} \bigg] \be_r \\
&= - \bigg[ \frac{ a'' + p' a + p a' - a'p + p^2 a}{\sqrt{2\pi M} r}\bigg] \be_r
\end{split}
\]
concluding the proof.
\end{proof}

\begin{definition} Define, initially on functions $a \in C^\infty_c(1,\infty)$, the operator 
\[
H_0 a = \bigg(\!-\!\frac{d}{dz} + p \bigg) \bigg( \frac{d}{dz} + p \bigg) a = - a'' - p' a + p^2 a.
\]
With $V = p^2 - p'$, the corresponding quadratic form is
\[
\mathfrak{b}_0[a] = \int_1^\infty (a' + p a)^2\, dz = \int_1^\infty ( (a')^2 + V a^2 )\, dz,  \quad \dom(\mathfrak{b}_0) = C^\infty_c(1,\infty).
\]
Define $\mathfrak{b}$ as the closure of $\mathfrak{b}_0$ in $L^2(1,\infty)$. Finally, define $H_V$ as the unique nonnegative selfadjoint operator associated with the quadratic form $\mathfrak{b}$, via the second representation theorem for forms. 
\end{definition}

\begin{rem}
Since $p \in L^2(1, Z)$ for each $Z > 1$, one can check that $\mathfrak{b}_0$ is closable in $L^2(1,\infty)$. Moreover, it is possible to prove an explicit description of the domain of $\mathfrak{b}$: $\dom(\mathfrak{b}) = \{ a \in L^2(1,\infty) : \: a \in H^1_{\rm loc}(1,\infty), \: a(1) = 0, \: a' + p a \in L^2(1,\infty) \:\}$. Clearly, if $p \notin L^2(1,\infty)$, in general $a' \notin L^2(1,\infty)$ and $p a \notin L^2(1,\infty)$. 
\end{rem}

Before proceeding we need some further information on $Z_a$, the axial component of $U_a$. We will use the following notation:
\[
\hat{\delta}(z) = \frac{\delta'(z)}{\delta(z)}, \quad \hat{\rho}(z) = \frac{\rho'(z)}{\rho(z)}, 
\]
and 
\[
\eta(z) = \rho(z) \bigg( \int_0^1 (s \hat{\delta}(z) + (1-s) \hat{\rho}(z) )^2 e^{-2sM(z)} \, ds  \bigg)^{\frac12},
\]
for $z \in (1,\infty)$.

\begin{prop}
With the notation introduced above, we have
\[
\p_z q = - \frac{(1-q) \hat{\delta} + q \hat{\rho}}{M},
\]
and for each fixed $a \in C^\infty_c(1,\infty)$, 
\[
\norma{Z_a}_{L^2(\Omega)^3}^2 = \int_1^\infty |a(z)|^2 \eta(z)^2 \, dz
\]
\end{prop}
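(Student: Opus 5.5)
The plan is a direct computation in cylindrical coordinates, done in two steps: first the formula for $\partial_z q$, then the norm of $Z_a$ via a change of variables.

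For the first claim, I differentiate \eqref{def:q} in $z$ with $r$ fixed. Writing $q = (\log r - \log\delta)/M$ and using $M' = \hat\rho - \hat\delta$, we get
\[
\p_z q = \frac{-\hat\delta}{M} - \frac{(\log r - \log \delta) M'}{M^2} = -\frac{\hat\delta + q(\hat\rho - \hat\delta)}{M} = -\frac{(1-q)\hat\delta + q\hat\rho}{M},
\]
which is the claimed identity.

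For the norm, by definition $Z_a = a\sqrt{M/(2\pi)}\,\p_z q\,\be_z$, so
\[
\norma{Z_a}^2 = \int_1^\infty |a|^2 \frac{M}{2\pi}\int_0^{2\pi}\!\!\int_{\delta}^{\rho} \frac{((1-q)\hat\delta + q\hat\rho)^2}{M^2}\, r\,dr\,d\theta\,dz .
\]
The $\theta$-integral cancels the $2\pi$, leaving $\int_1^\infty |a|^2 M^{-1}\int_\delta^\rho ((1-q)\hat\delta+q\hat\rho)^2 r\,dr\,dz$. In the inner integral I change variables from $r$ to $q$ (equivalently to $s$). Since $r = \delta e^{qM}$, we have $r\,dr = \delta^2 M e^{2qM}\,dq$. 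The factor $M$ cancels the $M^{-1}$, and the inner integral becomes $\delta^2\int_0^1 ((1-q)\hat\delta+q\hat\rho)^2 e^{2qM}\,dq$.

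To match the stated form of $\eta$, I set $s = 1-q$. Then $\delta^2 e^{2qM} = \delta^2 e^{2M} e^{-2sM} = \rho^2 e^{-2sM}$, because $e^{M} = \rho/\delta$. The integrand becomes $\rho^2 (s\hat\delta + (1-s)\hat\rho)^2 e^{-2sM}$, so the inner integral equals $\eta(z)^2$ exactly, and the formula follows. Fubini is justified since $a$ has compact support and every quantity is continuous on the support.

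There is no real obstacle here. The only point needing care is bookkeeping the substitution $s=1-q$, so that the weights $\hat\delta$ and $\hat\rho$ land on the coefficients $s$ and $1-s$ respectively, and the prefactor comes out as $\rho^2$ rather than $\delta^2$.
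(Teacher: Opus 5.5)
Your proposal is correct and follows the paper's proof. Both arguments differentiate $q$ using $M'=\hat\rho-\hat\delta$, integrate out $\theta$, and substitute $s=1-q$ with $r=\delta e^{(1-s)M}$ and $\delta^2e^{2M}=\rho^2$. The only difference is that you pass through $q$ first, which keeps the orientation bookkeeping cleaner than the paper's direct substitution in $s$.
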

\begin{proof}
Recall that by definition
\[
q(r,z) = \frac{\log r - \log \delta(z)}{M(z)}, \qquad \delta(z) < r < \rho(z), \, z > 1.
\]
Thus,
\[
\p_z q (r,z) = \frac{- \hat{\delta}(z) M(z) - (\log r - \log \delta(z)) M'(z)}{M(z)^2}.
\]
Since $M'(z) = \hat{\rho}(z) - \hat{\delta}(z)$, we conclude that
\[
\p_z q (r,z) =  \frac{- \hat{\delta}(z) (M(z) - q(r,z) M(z)) - q(r,z) M(z) \hat{\rho}(z) }{M(z)^2}
\]
from which we deduce the first formula in the statement. For the other one, we need to compute
\[
\norma{Z_a}^2_{L^2(\Omega)^3} = \int_1^\infty \int_0^{2\pi} \int_{\delta(z)}^{\rho(z)} a(z)^2 \frac{M(z)}{2 \pi} (\p_z q (r,z))^2 \,r\,dr\,d\theta\,dz.
\]
For brevity of notation we will not write the dependence of the several terms on the given variables, unless needed. Recalling the formula for $\p_z q$, and integrating in $\theta$, we obtain
\[
\norma{Z_a}^2_{L^2(\Omega)^3} = \int_1^\infty \int_{\delta(z)}^{\rho(z)} a^2 M\frac{ |(1-q) \hat{\delta} + q \hat{\rho}|^2}{M^2} \,r\,dr\,dz.
\]
Fix $z \in (1, \infty)$. We will now perform the change of variables $r \mapsto s = (1 - q(r, z))$ in the inner integral. Note that $q$ is monotonically increasing in $r$, and therefore the change of variables defines an admissible diffeomorphism. Recalling the notation $q(\cdot, z) = h^z(\cdot)$, we note that, for fixed $z \in (1,\infty)$, if $s = 1 - q(r,z)$, $r = (h^z)^{-1}(1-s) = \delta \e^{(1-s)M}$, and $\p_r s = - \p_r q = - (M r)^{-1} = - (M \delta \e^{(1-s)M})^{-1}$, thus implying
\[
\begin{split}
&\int_{\delta(z)}^{\rho(z)} a^2 M\frac{ |(1-q) \hat{\delta}(z) + q \hat{\rho}(z)|^2}{M^2} \,r\,dr \\
&= -\int_0^1 \frac{a^2}{M}(s \hat{\delta}(z) + (1-s) \hat{\rho}(z))^2 \delta \e^{(1-s)M} (- M \delta \e^{(1-s)M}) \, ds \\
&= \int_0^1 a^2 (s \hat{\delta}(z) + (1-s) \hat{\rho}(z))^2 \delta^2 \e^{2(1-s)M} \, ds
\end{split}
\]
Recalling that $\e^{2M} = \e^{2 \log(\rho/\delta)} = (\rho/\delta)^2$, we conclude the proof. 
\end{proof}

\begin{rem}\label{rem:decayofeta}
As $|s \hat{\delta} + (1-s) \hat{\rho}| \leq \max \{ |\hat{\delta}|, |\hat{\rho}| \}$ for every $s \in [0,1]$, we conclude that
\begin{equation}\label{eq:basicest}
\eta(z)^2 \leq \rho(z)^2 \max\{ |\hat{\delta}(z)|, |\hat{\rho}(z)| \}^2 \frac{1 - \e^{-2 M(z)}}{2 M(z)}, \qquad z>1.
\end{equation}
In practical cases, \eqref{eq:basicest} is enough to estimate the asymptotic behaviour of $\eta$. \\
Let us consider an example. Let $\alpha, \gamma > 0$ be two real parameters. If $\rho(z) = \e^{-z^\alpha}$, $\delta(z) = \e^{-z^\alpha - \e^{\gamma z}}$, then $M(z) = \e^{\gamma z}$, $|\hat{\delta}(z)| = \alpha z^{\alpha -1} + \gamma \e^{\gamma z}$ and $|\hat{\rho}(z)| = \alpha z^{\alpha -1}$, so
\[
\eta(z)^2 \leq C \e^{-2 z^\alpha} \gamma^2 \e^{2 \gamma z} \frac{1}{2 \e^{\gamma z}} \leq C \e^{ 2 (\gamma z - z^\alpha) }
\]
and the right hand side tends to zero if $\alpha > 1$; or, if $\alpha = 1$ and $\gamma < 1$. See also Example \ref{ex:superexpM} for a more detailed estimation of $\eta$ in this case. 
\end{rem}

As usual let $P_\nabla = \nabla (\Delta_\Omega^{\rm dir})^{-1} \Div$ be the projection onto $\nabla H^1_0(\Omega)$ and define 
\[
W_a = (I - P_\nabla) U_a = R_a + Z_a - P_\nabla Z_a = R_a + (I - P_\nabla) Z_a.
\]
Then
\begin{equation}\label{eq:WaRa}
\norma{W_a - R_a}_{L^2(\Omega)^3} \leq \norma{(I - P_\nabla) Z_a}_{L^2(\Omega)^3} \leq \norma{Z_a}_{L^2(\Omega)^3} \leq \sup_{z \in \supp a} |\eta(z)| \norma{a}_{L^2(1,\infty)}.
\end{equation}
Moreover, recalling Prop \ref{prop:Ua}, the notation \eqref{eq:defRa}, and the identity $\curl\curl W_a = \curl\curl U_a$, we conclude that
\begin{equation}\label{eq:1Dreduction}
(\curl \curl - \omega)W_a = R_{(H_V-\omega)a} - \omega (W_a - R_a)
\end{equation}
for all $a \in C^\infty_c(1,\infty)$. We then deduce the following theorem

\begin{theorem}\label{thm:Weylmodestransplant}
Assume that $\sup_{z \geq Z} |\eta(z)| \to 0$ as $Z \to + \infty$. Then 
\[
\sigma_{e,\Zhe}(H_V) = \sigma_e(H_V) \subseteq \sigma_e(\cM).
\]
\end{theorem}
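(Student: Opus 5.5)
The proof has two parts. First we show $\sigma_{e,\Zhe}(H_V)=\sigma_e(H_V)$ for the one-dimensional operator. Then we transplant Zhislin sequences of $H_V$ into Weyl sequences of $\cM$ via $a\mapsto W_a$, using \eqref{eq:1Dreduction} and \eqref{eq:WaRa}.

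\textbf{Step 1: $\sigma_{e,\Zhe}(H_V)=\sigma_e(H_V)$.} The inclusion $\subseteq$ is trivial. For the converse, take $\omega\in\sigma_e(H_V)$ and a Weyl sequence $(b_n)$ with $\|b_n\|=1$, $b_n\rightharpoonup0$ and $(H_V-\omega)b_n\to0$.

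On each bounded interval $(1,Z)$ the form $\mathfrak b$ controls $\|b'\|_{L^2(1,Z)}$ up to $\|b\|$, since $p\in L^2(1,Z)$. Hence $(b_n)$ is bounded in $H^1(1,Z)$, and by Rellich $b_n\to0$ in $L^2(1,Z)$ for every fixed $Z$.

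Let $\chi_R(z)=\chi(z/R)$ be a smooth cut-off vanishing near $[1,R]$ and equal to $1$ on $[2R,\infty)$. Then
\[
(H_V-\omega)(\chi_R b_n)=\chi_R(H_V-\omega)b_n-2\chi_R' b_n'-\chi_R'' b_n .
\]
Because $p$ is locally bounded, the term $\chi_R'b_n'$ is controlled by the local $H^1$ bound, uniformly for $R\ge 1$. Since $b_n\to0$ in $H^1_{\rm loc}$ after a diagonal argument, we may choose $R_n\to\infty$ slowly enough that all error terms tend to $0$ and $\|\chi_{R_n}b_n\|\to1$. Normalising gives a Zhislin sequence.

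By a density argument on the graph norm, we can also replace each term by some $a_n\in C^\infty_c(1,\infty)$, keeping $\supp a_n\subset(R_n,\infty)$.

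\textbf{Step 2: transplanting to $\cM$.} Given such $a_n$, set $W_n=W_{a_n}=(I-P_\nabla)U_{a_n}$.

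\emph{Membership in the domain.} By Prop.~\ref{prop:Ua}(i), $U_{a_n}\in H_0(\curl,\Omega)$. Since $P_\nabla U_{a_n}\in\nabla H^1_0(\Omega)\subset H_0(\curl,\Omega)$ is curl-free, we get $W_n\in X_{N0}(\Omega)$ with $\curl W_n=\curl U_{a_n}$. Moreover $\curl\curl W_n\in L^2$ by Prop.~\ref{prop:Ua}(iii), so $W_n\in\dom(\cM)$ with $\cM W_n=\curl\curl U_{a_n}$. Justifying this cleanly from the form definition is the step I would be most careful with: I would check $q_\cM(W_n,v)=(\curl\curl U_{a_n},v)$ for $v\in X_{N0}$ by integrating by parts. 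The boundary term vanishes because $\nu\times v=0$, and the compact support in $z$ together with the $C^2$ boundary makes the integration legitimate.

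\emph{Norm estimates.} From \eqref{eq:WaRa} and $\|R_{a_n}\|=\|a_n\|=1$,
\[
\|W_n\|=1+O\bigl(\sup_{z\ge R_n}|\eta(z)|\bigr)\to1 .
\]
From \eqref{eq:1Dreduction}, using that $R_b$ is linear and isometric in $b$,
\[
\|(\cM-\omega)W_n\|\le \|(H_V-\omega)a_n\|+|\omega|\sup_{z\ge R_n}|\eta(z)|\to0 .
\]

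\emph{Weak convergence.} Since $\|W_n-R_{a_n}\|\to0$ and $R_{a_n}$ is supported in $\{z>R_n\}$, which escapes to infinity, we have $W_n\rightharpoonup0$. Normalising $W_n$ then yields a Weyl sequence for $\cM$ at $\omega$, so $\omega\in\sigma_e(\cM)$.

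The main obstacle is the uniform control in Step 1 near the finite end. I would handle it by restricting to cut-offs supported far away, where $p$ is bounded on the relevant interval $[R,2R]$, so the local argument applies.
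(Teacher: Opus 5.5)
Your proposal is correct and follows the paper's argument: a Weyl--Zhislin sequence $(a_n)_n$ for $H_V$ is transplanted to the Weyl sequence $W_{a_n}/\norma{W_{a_n}}$ for $\cM$ via \eqref{eq:1Dreduction} and \eqref{eq:WaRa}. Your Step 1, the check $\cM W_{a_n}=\curl\curl U_{a_n}$ and the proof of $W_{a_n}\rightharpoonup 0$ supply details that the paper's one-line proof leaves implicit.

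The step to state more carefully is the passage to $a_n\in C^\infty_c(1,\infty)$. This set need not be an operator core for $H_V$ because of the Dirichlet endpoint $z=1$. Graph-norm approximation of $\chi_{R_n}b_n$, which vanishes near $z=1$, does hold, because $H_V\geq 0$ is limit-point at $+\infty$. Also, the bound on $\chi_R'b_n'$ is not uniform in $R$ when $p$ is unbounded, so it is your diagonal choice of $R_n$ that makes the error terms vanish.
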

\begin{proof}
If $\omega \in \sigma_{e,\Zhe}(H_V)$ and $(a_n)_n$ is a Weyl-Zhislin singular sequence for $H_V$, then $(\frac{W_{a_n}}{\norma{W_{a_n}}})_n$ is a Weyl sequence for $\cM$ at the same frequency $\omega$, in view of \eqref{eq:1Dreduction} and \eqref{eq:WaRa}.
\end{proof}

\begin{corollary}
Assume that $\sup_{z \geq Z} |\eta(z)| \to 0$ as $Z \to + \infty$ and that the potential 
\[
V(z) = p(z)^2 - p'(z) = \frac{1}{4} \bigg( \frac{M'(z)}{M(z)}\bigg)^2 - \frac{1}{2}\bigg( \frac{M'(z)}{M(z)} \bigg)',
\]
converges pointwise to zero as $z \to + \infty$. Then $\sigma_e(\cM) = [0, +\infty)$. 
\end{corollary}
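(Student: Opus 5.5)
The corollary follows from Theorem \ref{thm:Weylmodestransplant} once we show $\sigma_e(H_V) = [0,+\infty)$. Since $\cM \geq 0$, we have $\sigma_e(\cM) \subseteq [0,+\infty)$, and Theorem \ref{thm:Weylmodestransplant} gives $\sigma_e(\cM) \supseteq \sigma_{e,\Zhe}(H_V)$. So it suffices to produce, for every $\omega = k^2 > 0$, a Zhislin--Weyl singular sequence for $H_V$ at frequency $\omega$. Closedness of the essential spectrum then adds the endpoint $0$.

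\textbf{Construction of the sequence.} For each $n$ pick $Z_n \to +\infty$ with $\sup_{z \geq Z_n}|V(z)| \leq 1/n$; this is possible if $V(z)\to 0$, and we address the pointwise issue below. Set $L_n = n$ and
\[
a_n(z) = L_n^{-1/2}\,\chi\big((z - Z_n)/L_n\big)\,\e^{\I k z},
\]
where $\chi \in C^\infty_c(0,1)$ is fixed with $\norma{\chi}_{L^2}=1$. (Real parts may be taken if real test functions are preferred.) Then $a_n \in C^\infty_c(1,\infty) \subset \dom(H_V)$, and the supports of the $a_n$ march off to infinity, so $a_n \rightharpoonup 0$ and the sequence is Zhislin. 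Moreover
\[
(H_V - k^2)a_n = -L_n^{-1/2}\big(2\I k L_n^{-1}\chi' + L_n^{-2}\chi''\big)(\cdot)\,\e^{\I k z} + V a_n .
\]
The first term has $L^2$ norm $O(1/L_n)$. The second is bounded by $\sup_{\supp a_n}|V| \leq 1/n$. Hence $\norma{(H_V-k^2)a_n}\to 0$, and so $k^2 \in \sigma_{e,\Zhe}(H_V)$.

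\textbf{Main obstacle.} The argument above needs $V \to 0$ uniformly on the supports of the $a_n$, which is exactly what "$V(z)\to 0$ as $z\to+\infty$" means. I read the hypothesis "converges pointwise to zero as $z\to+\infty$" in this sense, i.e.\ $\lim_{z\to\infty}V(z)=0$, not as pointwise convergence of a family. It also needs the $\eta$-condition, which is used only through Theorem \ref{thm:Weylmodestransplant} and requires nothing further here.

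The one remaining check is that smooth compactly supported $a_n$ lie in $\dom(H_V)$ with $H_V a_n = -a_n'' + V a_n$. This holds because $C^\infty_c(1,\infty)$ is the form core defining $\mathfrak b$ and $V$ is locally integrable, $p$ being $C^1$ since $\rho,\delta \in C^2$. This completes the plan: $(0,\infty)\subseteq \sigma_e(\cM) \subseteq [0,\infty)$, so $\sigma_e(\cM)=[0,+\infty)$.
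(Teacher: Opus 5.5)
Your proposal is correct and follows the paper's route. The paper's proof simply notes that $V(z)\to 0$ gives $\sigma_e(H_V)=[0,+\infty)$ and invokes Theorem~\ref{thm:Weylmodestransplant}; you make that standard step explicit with compactly supported Weyl--Zhislin sequences $a_n$, which is exactly the form the proof of Theorem~\ref{thm:Weylmodestransplant} requires, and you add $\sigma_e(\cM)\subseteq[0,+\infty)$ from $\cM\geq 0$ (your justification that $H_V a_n=-a_n''+Va_n$ really uses continuity of $V$, coming from $p\in C^1$, rather than mere local integrability, but that holds here).
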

\begin{proof}
The assumption $V(z) \to 0$ as $z \to + \infty$ implies that $\sigma_e(H_V) = [0,+\infty)$, hence the conclusion from Theorem \ref{thm:Weylmodestransplant}.
\end{proof}

\begin{example}
Let $c \in (0,1)$, $\alpha > 0$ and let 
\[
\Omega = \{ (\bar{x},z) : z > 1, \: c \, \Phi_\alpha(z) < |\bar{x}| < \Phi_\alpha(z) \},
\]
with $\Phi_\alpha(z) = \e^{-z^\alpha}$. Then $M(z) = \log(1/c)$, $|\hat{\delta}(z)| = |\hat{\rho}(z)| = \alpha |z|^{\alpha -1}$. According to \eqref{eq:basicest}, since $\rho$ is exponentially decaying and $\max\{|\hat{\delta}(z)|, |\hat{\rho}(z)|\}$ is at most polynomially growing, we conclude that $\sup_{z \geq Z} |\eta(z)| \to 0$ as $Z \to + \infty$, and $V = 0$ as $M$ is constant. Thus, $\sigma_e(\cM) = [0, + \infty)$, independently of the choice of $\alpha > 0$. 
\end{example}

\begin{example}
Let $\beta > 0$ and consider the perforated exponential horn
\[
\Omega = \{ (\bar{x},z) : z > 1, \: \e^{-z - z^\beta} < |\bar{x}| < \e^{-z} \},
\]
Then $M(z) = z^\beta$, $|\hat{\delta}(z)| = 1 + \beta z^{\beta - 1}$, $|\hat{\rho}(z)| = 1$, so $\sup_{z \geq Z} |\eta(z)| \to 0$ as $Z \to + \infty$. Moreover, 
\[
V(z) = \frac{\beta (\beta + 2)}{4 z^2} \to 0,
\]
as $z \to + \infty$. We conclude that $\sigma_e(\cM) = [0,+\infty)$.
\end{example}

\begin{example}\label{ex:superexpM}
Let $\gamma > 0$ be given and define
\[
\Omega = \{ (\bar{x},z) : z > 1, \: \e^{-z - \e^{2 \gamma z}} < |\bar{x}| < \e^{-z} \}.
\]
Then $M(z) = \e^{2 \gamma z}$, $|\hat{\delta}(z)| = 1 + 2 \gamma \e^{2 \gamma z}$, $|\hat{\rho}(z)| = 1$, and $p(z) = \gamma$, so that $V(z) = \gamma^2$. By direct computation, one checks that
\[
\eta(z)^2 = \e^{-2z} \int_0^1 |s (1 + 2 \gamma \e^{2 \gamma z}) + (1-s) |^2 \e^{-2s \e^{2\gamma z}}\, ds = \frac{\e^{-2z}}{\e^{2 \gamma z}} \int_0^{\e^{2 \gamma z}} (1 + 2 \gamma u)^2 \e^{-2u}\, du
\]
so that $\eta(z)^2 \leq C \e^{-2z} \to 0$ as $z \to + \infty$, independently of $\gamma$. However, since $V$ is constant in this case, Theorem \ref{thm:Weylmodestransplant} only implies that $[\gamma^2, + \infty) \subseteq \sigma_e(\cM)$. 
\end{example}

\section{The perforated horn with exponentially large $M$}\label{sec:finalsec}
Example \ref{ex:superexpM} suggests that in order to move the essential spectrum of $\curl\curl$ up in the positive half-line, it is not enough to make $\rho$ shrink faster; it is necessary that the potential $ V = p^2 - p'$ be strictly positive. However, to show that this is also sufficient one needs additional assumptions. We will restrict therefore ourselves to a toy model.

We will consider the perforated horn given by
\[
\Omega = \{ (\bar{x}, z) \in \R^3 : \e^{- z - \e^{2 \gamma z^\alpha}} < |\bar{x}| < \e^{-z}, z > 1 \},
\]
where $\alpha > 0$ is a fixed parameter. The aim of this section is to prove Theorem \ref{thm:sigmaessperforatedhornalpha}
%

The proof of this theorem requires several steps; each of those is the content of a subsection. For later use, we introduce the notations $\mathbb{S}^1 = \R/(2 \pi \Z)$ and $\Gamma = \p \Omega \setminus (\mathbb{S}^1 \times \{1\})$. 

\subsection{Step 1: cylindrical coordinates}
Let $C$ be the perforated cylinder $(1/2,1) \times \mathbb{S}^1 \times (1, \infty)$ with annular cross-section,  and define
\[
r(s,z) = \rho(z) \e^{-2(1-s) M(z)}, \qquad \frac12 < s < 1, z > 1.
\]
The function
\[
\Phi(s,\theta,z) = (r(s,z) \cos \theta, r(s,z) \sin \theta, z), \qquad (s, \theta, z) \in C
\]
defines a smooth diffeomorphism of $C$ onto $\Omega$. We carry forward the notations
\[
\hat{\delta}(z) = \frac{\delta'(z)}{\delta(z)}, \quad \hat{\rho}(z)= \frac{\rho'(z)}{\rho(z)}, \quad M(z) = \log \frac{\rho(z)}{\delta(z)},
\quad p(z) = \frac{M'(z)}{2M(z)} \]
from Section \ref{section:7} and define
\[ \beta(s,z) = 2(1-s) \hd(z) + (2s -1) \hr(z), \;\;\; B(s,z) = \frac{\beta(s,z)}{2M(z)}. \] 
A direct computation shows that
\[
\p_s r = 2 M r, \quad \p_z r = r ( \hr - (2-2s) \hr + (2-2s) \hd) = \beta r,
\]
from which we deduce the following.

\begin{lemma}
The pullback of the Euclidean metric $g = \Phi^*g_E$ is given by
\[
g = (2 M)^2 r^2 ds^2 + 4 M \beta r^2 ds dz + r^2 d\theta^2 + (1 + \beta^2 r^2) dz^2
\]
that is
\[
(g_{ij})_{ij} = 
\begin{pmatrix}
(2 M)^2 r^2 & 0   & 2 M \beta r^2 \\
0           & r^2 & 0             \\
2M \beta r^2 & 0 & 1 + \beta^2 r^2 
\end{pmatrix}
\]
with determinant $\det g = (2 M)^2 r^4$. The inverse metric is given by
\[
(g^{ij})_{ij} = \begin{pmatrix}
\frac{1}{(2M)^2 r^2} + (\frac{\beta}{2 M})^2& 0   & - \frac{\beta}{2M} \\
0                  & \frac{1}{r^2} & 0             \\
- \frac{\beta}{2M} & 0 & 1 
\end{pmatrix}
\]
\end{lemma}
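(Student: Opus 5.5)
The lemma is a direct computation of $g=\Phi^*g_E=D\Phi^{T}D\Phi$, followed by a determinant and an inverse of a $3\times 3$ matrix with one decoupled direction. I will carry it out in three short steps.

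\textbf{Step 1: the Jacobian in the moving frame.} Write $\Phi(s,\theta,z)=r(s,z)\be_r(\theta)+z\be_z$, with $r(s,z)=\rho(z)\e^{-2(1-s)M(z)}$. Differentiating $\log r=\log\rho-2(1-s)M$ gives $\p_s r=2Mr$. For $\p_z r$, use $M'=\hr-\hd$ to get
\[
\p_z r = r\bigl(\hr-2(1-s)(\hr-\hd)\bigr)=r\bigl((2s-1)\hr+2(1-s)\hd\bigr)=\beta r.
\]
Since $\p_\theta\be_r=\be_\theta$, the three coordinate vectors are
\[
\p_s\Phi=2Mr\,\be_r,\qquad \p_\theta\Phi=r\,\be_\theta,\qquad \p_z\Phi=\beta r\,\be_r+\be_z.
\]

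\textbf{Step 2: the metric coefficients.} Because $\{\be_r,\be_\theta,\be_z\}$ is orthonormal, the Euclidean inner products $g_{ij}=\p_i\Phi\cdot\p_j\Phi$ are read off immediately:
\[
g_{ss}=4M^2r^2,\quad g_{\theta\theta}=r^2,\quad g_{zz}=1+\beta^2r^2,\quad g_{sz}=2M\beta r^2,\quad g_{s\theta}=g_{\theta z}=0.
\]
This gives both the matrix and the line element; the cross term $2g_{sz}\,ds\,dz$ accounts for the coefficient $4M\beta r^2$.

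\textbf{Step 3: determinant and inverse.} The $\theta$ direction decouples, so $\det g = r^2\det A$, where $A$ is the $(s,z)$ block. Its determinant is
\[
\det A = 4M^2r^2(1+\beta^2r^2)-4M^2\beta^2r^4 = 4M^2r^2,
\]
so $\det g=(2M)^2r^4$, as claimed.

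The inverse is block diagonal, with $g^{\theta\theta}=1/r^2$. For the $2\times2$ block, use $A^{-1}=\frac{1}{\det A}\begin{pmatrix}g_{zz}&-g_{sz}\\-g_{sz}&g_{ss}\end{pmatrix}$, which gives
\[
g^{ss}=\frac{1+\beta^2r^2}{4M^2r^2}=\frac{1}{(2M)^2r^2}+\Bigl(\frac{\beta}{2M}\Bigr)^2,\qquad g^{sz}=-\frac{\beta}{2M},\qquad g^{zz}=1.
\]

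None of these steps is a genuine obstacle. The only place needing care is the sign bookkeeping in $\p_z r$ when converting $M'$ into the expression $\beta$.
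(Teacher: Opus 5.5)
Your proposal is correct and follows the paper's route. The paper's proof writes down the same three coordinate vectors, $\p_s\Phi = 2Mr\,\be_r$, $\p_\theta\Phi = r\,\be_\theta$ and $\p_z\Phi = \beta r\,\be_r + \be_z$, and leaves the Gram matrix, determinant and inverse as ``standard computations''; you carry these out explicitly and correctly, including the check $\p_z r = \beta r$ via $M' = \hr - \hd$.
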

\begin{proof}
With $\be_r = (\cos \theta, \sin \theta, 0)$, $\be_\theta = (-\sin \theta, \cos \theta, 0)$, $\be_z = (0,0,1)$, we have $\p_s \Phi = 2M r \be_r$, $\p_\theta \Phi = r \be_\theta$, $\p_z \Phi = \beta r \be_r + \be_z$, from which the claim follows after standard computations.
\end{proof}

\begin{lemma}\label{lemma:curldivcyl}
Let $E \in C^1(\Omega)^3$ be a smooth vector field such that $\nu \times E = 0$ on $\Gamma$, with representation in cylindrical coordinates $E = E_r \be_r + E_\theta \be_\theta + E_z \be_z$. Let $u = u_s ds + u_\theta d\theta + u_z dz$ denote the representation of $E$ in the coordinates $({s},\theta,z)$ induced by $\Phi$, that is, $u = \Phi^*(E)$ or equivalently, $(u_s, u_\theta, u_z)^T = (D\Phi^T(s,\theta,z))(E \circ \Phi)$. Then
\[
u_s = 2 M r (E_r \circ \Phi), \quad u_\theta = r (E_\theta \circ \Phi), \quad u_z = \beta r (E_r \circ \Phi) + (E_z \circ \Phi),
\]
and $u$ inherits the boundary conditions $u_\theta = 0$, $u_z = 0$ at $s \in \{1/2, 1\}$.
Additionally,
\begin{equation}\label{EL2}
\norma{E}^2_{L^2(\Omega)^3} = \int_C \bigg[ \frac{|u_s|^2}{2 M} + 2 M |u_\theta|^2 + 2 M r^2 \bigg| u_z - \frac{\beta}{2M} u_s\bigg|^2 \bigg] \, ds d\theta dz.
\end{equation}
Writing 
$c_{s\theta} u = \p_s u_\theta - \p_\theta u_s$, $c_{\theta z} u = \p_\theta u_z - \p_z u_\theta$, $c_{zs} u = \p_z u_s - \p_s u_z$, we further have
\begin{equation}\label{CEL2}
\norma{\curl E}^2_{L^2(\Omega)^3} = \int_C \bigg[ 2 M \bigg| c_{\theta z}u + \frac{\beta}{2M} c_{s \theta}u \bigg|^2 + \frac{|c_{zs}u|^2}{2 M} + \frac{|c_{s\theta}u|^2}{2 M r^2}\bigg] \, ds d\theta dz.
\end{equation}
Finally
\begin{equation}
\norma{\Div E}^2_{L^2(\Omega)} = \int_C \frac{1}{2M r^2} \big| \Theta(u) \big|^2 dsd\theta dz
\label{DEL2}
\end{equation}
in which 
\[
\Theta(u) := \bigg[ \p_s \bigg( \frac{u_s}{2M} - \beta r^2 \bigg( u_z - \frac{\beta}{2M}u_s \bigg) \bigg) + 2M\p_\theta u_\theta + \p_z \bigg( 2 M r^2 \bigg(u_z - \frac{\beta}{2M} u_s\bigg) \bigg) \bigg] 
\]
\end{lemma}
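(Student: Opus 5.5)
The plan is to treat everything as the pullback of the Euclidean $1$-form $E^\flat$ under $\Phi$. Then each identity reduces to a standard coordinate formula for the pointwise norm, the exterior derivative, or the codifferential in the metric $g$ of the previous lemma, with volume density $\sqrt{\det g}=2Mr^2$. The algebra after that is routine.

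\emph{Components and boundary conditions.} Since $u=D\Phi^T(E\circ\Phi)$, we have $u_s=E\cdot\p_s\Phi$, $u_\theta=E\cdot\p_\theta\Phi$ and $u_z=E\cdot\p_z\Phi$. Inserting $\p_s\Phi=2Mr\be_r$, $\p_\theta\Phi=r\be_\theta$ and $\p_z\Phi=\beta r\be_r+\be_z$ from the proof of the previous lemma gives the three formulas for $u_s,u_\theta,u_z$. Next note that $r(1,z)=\rho(z)$ and $r(1/2,z)=\rho(z)\e^{-M(z)}=\delta(z)$, so $\{s=1\}$ and $\{s=1/2\}$ are mapped onto the outer and inner lateral boundaries. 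There the vectors $\p_\theta\Phi$ and $\p_z\Phi$ span the tangent plane. Hence $\nu\times E=0$ says exactly that $E\cdot\p_\theta\Phi=E\cdot\p_z\Phi=0$, i.e.\ $u_\theta=u_z=0$ at $s\in\{1/2,1\}$.

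\emph{$L^2$ norm.} We have $|E|^2\circ\Phi=g^{ij}u_iu_j$. Using the inverse metric of the previous lemma, I will rewrite the $(s,z)$ block by completing the square:
\[
g^{ij}u_iu_j=\frac{u_s^2}{(2M)^2r^2}+\Big(u_z-\frac{\beta}{2M}u_s\Big)^2+\frac{u_\theta^2}{r^2}.
\]
Multiplying by $\sqrt{\det g}=2Mr^2$ and integrating over $C$ gives \eqref{EL2}.

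\emph{Curl.} Because $\curl$ commutes with pullback (as $d$ on $1$-forms), $\curl E$ has contravariant components $v^s=c_{\theta z}u/\sqrt{\det g}$, $v^\theta=c_{zs}u/\sqrt{\det g}$ and $v^z=c_{s\theta}u/\sqrt{\det g}$. This is the formula $v^i=\varepsilon^{ijk}\p_ju_k/\sqrt{\det g}$, valid since $\Phi$ preserves orientation. Then $|\curl E|^2\circ\Phi=g_{ij}v^iv^j$, and multiplying by $\sqrt{\det g}$ yields
\[
\frac{1}{2Mr^2}\Big[(2M)^2r^2|c_{\theta z}|^2+4M\beta r^2\,c_{\theta z}c_{s\theta}+(1+\beta^2r^2)|c_{s\theta}|^2+r^2|c_{zs}|^2\Big].
\]
Completing the square in $c_{\theta z}$ produces $2M|c_{\theta z}+\tfrac{\beta}{2M}c_{s\theta}|^2+\tfrac{|c_{s\theta}|^2}{2Mr^2}+\tfrac{|c_{zs}|^2}{2M}$, which is \eqref{CEL2}. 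For complex fields I will use the real part in the cross term; the completed square absorbs it.

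\emph{Divergence.} I will use $\Div E\circ\Phi=(\det g)^{-1/2}\p_i\big(\sqrt{\det g}\,g^{ij}u_j\big)$. The three contravariant fluxes are
\begin{align*}
\sqrt{\det g}\,g^{sj}u_j&=\frac{u_s}{2M}-\beta r^2\Big(u_z-\frac{\beta}{2M}u_s\Big),\\
\sqrt{\det g}\,g^{\theta j}u_j&=2Mu_\theta,\\
\sqrt{\det g}\,g^{zj}u_j&=2Mr^2\Big(u_z-\frac{\beta}{2M}u_s\Big).
\end{align*}
Their divergence is $\Theta(u)$, using that $M$ does not depend on $\theta$. Squaring, multiplying by $\sqrt{\det g}/\det g=1/(2Mr^2)$ and integrating gives \eqref{DEL2}.

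The main obstacle is only bookkeeping. One must get the orientation and sign convention of the $c$'s consistent with the Levi-Civita identification, and carry out the completion of squares correctly. No analytic difficulty arises, since $E\in C^1$ and the change of variables is smooth.
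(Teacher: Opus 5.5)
Your proposal is correct and follows essentially the paper's route: pull back by $\Phi$, work with the metric of the previous lemma and $\sqrt{\det g}=2Mr^2$, complete squares, and compute the divergence through the fluxes $\sqrt{\det g}\,g^{ij}u_j$, exactly as in the Appendix. The only variation is in the curl step, where you use the dual vector $v^i=\varepsilon^{ijk}\p_j u_k/\sqrt{\det g}$ with the covariant metric $g_{ij}$ rather than the paper's inner products of $2$-forms built from $g^{ij}$; the two are equivalent, and yours is slightly more economical and gives the correct weight $1/(2Mr^2)$ on $|c_{s\theta}u|^2$ (the Appendix writes $2Mr$ there, a typo).
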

\begin{proof}
The proof of this lemma is a standard computation. We postpone it to the Appendix.
\end{proof}
\subsection{Step 2: renormalized vector fields}
Given $u \in L^2(C, g)$ define the renormalized vector field $U = (X, Y, W) \in L^2(C)^3$ by
\begin{equation}\label{def:rescaling}
X = \frac{u_s}{\sqrt{2M}}, \quad Y = \sqrt{2M} u_\theta, \quad Z = \sqrt{2M} r u_z, \quad W = \sqrt{2M} r \bigg( u_z - \frac{\beta}{2M} u_s \bigg).
\end{equation}
Then \eqref{EL2} becomes
\[
\norma{E}_{L^2(\Omega)^3}^2 = \norma{U}^2_{L^2(C)^3}.
\]
Thus, we have defined an isometric isomorphism $\cJ : L^2(\Omega)^3 \to L^2(C)^3$ mapping a given $E \in L^2(\Omega)^3$ to the rescaled vector field $U$ defined in \eqref{def:rescaling}. 
As a side remark, with
\[
\kappa(s,z) = \beta(s,z) r(s,z),
\]
the boundary condition $\nu \times E = 0$ on $\Gamma$ corresponds to $Y = 0$, $Z = 0$ and $W = - \kappa X$ at $s = 1/2, 1$.\\[0.1cm]

We will now compute $\curl U$ and $\Div U$ in relation with $\curl u$ and $\Div u$. {To this end} it is convenient to define the rescaled first-order operators
\begin{align} 
&K(X,Y) = \frac{ \p_s Y }{2 M r} - \frac{\p_\theta X}{r}, \label{def:K}\\
&L(X,Y) = \frac{ \p_s X }{2 M r} + \frac{\p_\theta Y}{r}, \label{def:L} \\
&T f = \frac{1}{2 M r} (\p_s f - 2 M f), \\ 
& G f = \frac{\p_\theta f}{r}, \\
& H f = (D_B - p) f, \qquad D_B := \p_z - B \p_s.
\end{align}
We note that $D_B r = \p_z r - B \p_s r = \beta r - B (2 M) r = 0$ and $\p_s \beta = 2 (\hr - \hd) = 2 M'$. 

\begin{prop}
With the notation introduced above, let $U = (X, Y, W) = \cJ(E)$, with $E \in H_0(\curl, \Omega) \cap H(\Div, \Omega)$. Then we have
\[
\norma{\curl E}_{L^2(\Omega)^3}^2 = \int_C \big( |K(X,Y)|^2 + |H X - T W|^2 + |G W - H Y|^2  \big) \, ds d\theta dz.
\]
and 
\[
\norma{\Div E}_{L^2(\Omega)^3}^2 = \int_C |L(X,Y) + HW|^2\, ds d\theta dz
\]
\end{prop}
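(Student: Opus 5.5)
The plan is to derive both identities by direct substitution of the renormalization \eqref{def:rescaling} into the formulas \eqref{CEL2} and \eqref{DEL2} of Lemma \ref{lemma:curldivcyl}, first for smooth fields and then for general ones by density. I first take $E \in \cD_N(\Omega)$, so that Lemma \ref{lemma:curldivcyl} applies pointwise. At the end I extend to all $E \in X_N(\Omega)$ using Theorem \ref{thm:coreprop}. This works because both sides of each identity are continuous with respect to $\norma{\cdot}_{X_N(\Omega)}$: the left sides trivially, and the right sides because they equal the left sides on a dense set and $\cJ$ is isometric in $L^2$.

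\textbf{Inversion formulas.} The renormalization gives
\[
u_s = \sqrt{2M}\,X, \qquad u_\theta = \frac{Y}{\sqrt{2M}}, \qquad u_z = \frac{W}{\sqrt{2M}\,r} + B\sqrt{2M}\,X .
\]
The algebra uses five elementary facts:
\begin{itemize}
\item $M$ depends only on $z$, so $\p_s$ and $\p_\theta$ commute with $\sqrt{2M}$;
\item $\p_z\sqrt{2M} = p\sqrt{2M}$;
\item $\p_s r = 2Mr$, hence $\p_s(W/r) = (\p_s W - 2MW)/r$;
\item $\p_s B = \p_s\beta/(2M) = M'/M = 2p$;
\item $D_B r = 0$.
\end{itemize}

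\textbf{Curl terms.} I match the three terms of \eqref{CEL2} one at a time.
\begin{itemize}
\item The term $|c_{s\theta}u|^2/(2Mr^2)$: one has $c_{s\theta}u/(\sqrt{2M}\,r) = \p_s Y/(2Mr) - \p_\theta X/r = K(X,Y)$.
\item The term $|c_{zs}u|^2/(2M)$: one has $\p_z u_s = \sqrt{2M}(\p_z X + pX)$. Also
\[
\p_s u_z = \frac{\p_s(W/r)}{\sqrt{2M}} + \sqrt{2M}\,(2pX + B\p_s X).
\]
Dividing $c_{zs}u = \p_z u_s - \p_s u_z$ by $\sqrt{2M}$ therefore gives $\p_z X - B\p_s X - pX - TW = HX - TW$.
\item The first term of \eqref{CEL2} is $2M|c_{\theta z}u + B\,c_{s\theta}u|^2$. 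Regroup the expression inside as
\[
(\p_\theta u_z - B\p_\theta u_s) - (\p_z - B\p_s)u_\theta .
\]
The first bracket equals $\p_\theta W/(\sqrt{2M}\,r)$. The second equals $D_B(Y/\sqrt{2M}) = (D_B Y - pY)/\sqrt{2M}$. Multiplying by $\sqrt{2M}$ gives $GW - HY$.
\end{itemize}

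\textbf{Divergence.} In $\Theta(u)$ the inner quantities are
\[
\frac{u_s}{2M} - \beta r^2\Big(u_z - \frac{\beta}{2M}u_s\Big) = \frac{X - \beta r W}{\sqrt{2M}}, \qquad 2Mr^2\Big(u_z - \frac{\beta}{2M}u_s\Big) = \sqrt{2M}\,rW .
\]
After dividing $\Theta(u)$ by $\sqrt{2M}\,r$, the $X$ and $Y$ contributions give $\p_s X/(2Mr) + \p_\theta Y/r = L(X,Y)$. The $W$ contributions are
\[
\frac{1}{r}\Big[-2p\,rW - B\p_s(rW) + p\,rW + \p_z(rW)\Big] = \frac{1}{r}\big(D_B - p\big)(rW).
\]
Since $D_B r = 0$, this equals $D_B W - pW = HW$. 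Squaring and integrating gives the claimed divergence identity.

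\textbf{Main obstacle.} I expect the real difficulty to be bookkeeping rather than analysis. The cross term $2M\beta r^2\,ds\,dz$ in the metric produces the combination $u_z - Bu_s$, and one has to check that all the $\p_s B = 2p$ and $\p_z\sqrt{M}$ contributions collapse exactly into the single operator $H = D_B - p$ in every place it appears. The $X$-terms in $c_{zs}$ are where the sign of $p$ flips, and I would verify them most carefully. The density step also needs a short check that $\cJ$ and its derivatives are well defined on $\cD_N(\Omega)$, which holds because $M$, $\beta$ and $r$ are $C^1$ and the fields are supported in a bounded range of $z$.
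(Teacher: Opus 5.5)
Your proposal is correct and follows the paper's route: substitute the renormalization \eqref{def:rescaling} into \eqref{CEL2} and \eqref{DEL2} and identify each term. The only cosmetic difference is that the paper handles $c_{zs}u$ through $Z=\sqrt{2M}\,r\,u_z$ and the identity $T(\kappa X)=2pX+B\p_s X$, whereas you write $u_z$ directly in terms of $W$ and $X$. Your density step goes slightly beyond the paper, which only ever uses the identity for $E\in\cD_N(\Omega)$. To make it airtight, apply the identity to $E_n-E_m$ to see that the right-hand integrands are Cauchy in $L^2(C)$, and note that their limits agree with the distributional expressions because $\cJ(E_n)\to\cJ(E)$ in $L^2(C)^3$.
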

\begin{proof}
To compute {$\curl E$}, one needs to note that
\begin{align*}
&c_{s\theta} u = \p_s (Y/\sqrt{2M}) - \p_\theta(\sqrt{2 M} X) = \sqrt{2M} r K(X,Y),\\
&c_{zs} u = \p_z(\sqrt{2 M} X) - \p_s (Z/(\sqrt{2M}r) = \sqrt{2M} ( \p_z X + p X - T Z), \\
& T(\kappa X) = 2 p X + B \p_s X, \\
& \p_z X + p X - T Z = \p_z X - B \p_s X - p X - T W = H X - T W, \\
&c_{\theta z} u = (\beta)^{-1} \kappa \sqrt{2M} K(X,Y), \quad
\sqrt{2M} \bigg( c_{\theta z} + \frac{\beta}{2M} \bigg) = - H Y + G W,
\end{align*}
from which the formula for the $L^2$ norm of $\curl E$ follows using \eqref{CEL2}. Regarding 
{$\Div E$}, one needs to note that
\[
\Theta(u)/(\sqrt{2M} r) = \frac{\p_s X}{2M r} + \frac{\p_\theta Y}{r} + \p_z W - B \p_s W - p W = L(X,Y) + H W
\]
and use \eqref{DEL2}.
\end{proof}

\begin{rem}
The operator $H$ is skew-symmetric on $H^1_{0, \Gamma}(C)$ as
\begin{equation}\label{eq:Hadj}
\begin{split}
(Hf, g) &= ((\p_z f - B \p_s f - p) f, g) = (f, - \p_z g + \p_s(B g) - p g) \\
&= ( f, - \p_z g + 2p \p_s g + B \p_s g - p g ) = (f, - H g),
\end{split}
\end{equation}
for all $f,g \in H^1_{0, \Gamma}(C)$.
\end{rem}
\, \\

\subsection{Step 3: sectional Hodge decomposition}
Let $z > 1$ be fixed in this subsection. Recall that we use the notation $S_z$ to denote the section of $\Omega$ given by $\{(r, \theta, z) : \delta(z) < r< \rho(z), \theta \in [0, 2 \pi) \}$. Let $\xi = u_s ds + u_\theta d\theta$ be a 1-form on $S_z$. Recalling \eqref{def:rescaling}, we deduce that
\[
\int_0^{2\pi} \int_{1/2}^1 \bigg( \bigg|\frac{u_s}{\sqrt{2M}}\bigg|^2 + \frac{|u_\theta|^2}{r^2} \bigg) r dr d\theta = \int_{1/2}^1 \int_0^{2\pi} \big( |X|^2 + |Y|^2) \, d\theta ds,
\]
and
\[
d_S \xi = (\p_r u_\theta - \p_\theta u_r) dr \wedge d\theta = \frac{1}{\sqrt{2M}} \bigg( \frac{\p_s Y}{2 M r} - \frac{\p_\theta X}{r}\bigg) dr \wedge d\theta = \frac{K(X,Y)}{\sqrt{2M}}dr \wedge d\theta,
\]
hence
\begin{equation}\label{eq:dSK}
\norma{d_S \xi}_{L^2(S_z)^2}^2 = \int_{1/2}^1\int_0^{2\pi} |K(X,Y)|^2\, d\theta ds;
\end{equation}
Similarly, with $\delta_S \xi = (\p_s u_s + \p_\theta u_\theta) $ one can compute
\begin{equation}\label{eq:dSL}
\norma{\delta_S \xi}_{L^2(S_z)^2}^2 = \int_{1/2}^1\int_0^{2\pi} |L(X,Y)|^2\, d\theta ds
\end{equation}

The usual Hodge decomposition:
\[
L^2 \Lambda^1(S_z) = d_S H^1_0(S_z) \oplus \cH^1_{\rm rel}(S_z) \oplus \star_S d_S H_{\perp}^1(S_z)
\]
implies that the only one-forms in the section $S_z$ compatible with the boundary condition $\nu \times E = 0$ on $\Gamma$ are : \\
1) TE modes given by $d_S \phi$, where $\phi = 0$ on $\p S_z$. These modes are represented by $(X, Y) = ( (2M)^{-1/2} \p_s \phi, \sqrt{2M} \p_\theta \phi)$. \\
2) TM modes given by $\star_S d_S \psi$, where $\p_s \psi = 0$ on $\p S_z$. These modes are represented by $(X, Y) = ( -\sqrt{2M} \p_\theta \psi, (2M)^{-1/2} \p_s \psi)$. \\
3) The TEM mode given by $d_S \log r = 2 M ds$. Its renormalized representative with unitary $L^2$ norm is $h = (\pi^{-1/2}, 0)$.\\[0.2cm]

Given a 1-form $\xi$ defined in $\Omega$, assume that its restriction to $S_z$ has renormalized representative $(X, Y)$. Define the averaging operator
\[
(\cA X)(z) = \frac{1}{\sqrt{\pi}} \int_{1/2}^1 \int_0^{2\pi} X(s,\theta,z) \, d\theta ds.
\]
Set $\square = (1/2,1) \times (0, 2 \pi)$. Define the projectors 
\begin{equation} \label{def:projectors}
P_h(X,Y) = \bigg( \frac{\cA X}{\sqrt{\pi}}, 0\bigg) \qquad  P_\perp(X,Y) = (X,Y) - P_h(X,Y),
\end{equation}
for all vector fields $(X,Y) \in L^2(\square)^2$. Let
\[
Q_{\square}[X, Y, Z] = \norma{K(X,Y)}^2_{\square} + \norma{L(X,Y)}^2_{\square} + \norma{TZ}^2_{\square} + \norma{G Z}_{\square}^2
\]
where
\[
\norma{\cdot}^2_{\square} = \int_{1/2}^1 \int_0^{2 \pi} |\cdot|^2\, ds \, d\theta
\]
is the usual $L^2$-norm. 

\begin{prop}
There exists $c_0 > 0$ independent of $z$ for $z \geq z_0$ such that 
\begin{equation} \label{eq:transverseTETMest}
Q_{\square}[X,Y,Z] \geq \frac{c_0}{\rho(z)^2} \big( \norma{P_\perp(X,Y)}_{\square}^2 + \norma{Z}_{\square}^2 \big)
\end{equation}
for all smooth vector fields $U = (X,Y,Z) \in C^1(C)^3 \cap L^2(C)^3$ with $Y = 0$ and $Z = 0$ at $s \in \{1/2,1\}$.
\end{prop}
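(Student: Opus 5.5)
The plan is to undo the renormalisation on each fixed section $S_z$, so that \eqref{eq:transverseTETMest} turns into two two-dimensional Poincaré-type inequalities on the annulus $S_z=\{\delta(z)<r<\rho(z)\}$. Their constants should scale like $\rho(z)^2$ and must not depend on the ratio $\delta(z)/\rho(z)=\e^{-M(z)}$, which tends to zero extremely fast. The $(X,Y)$ part and the $Z$ part of $Q_\square$ decouple, and I treat them separately.

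\textbf{The $(X,Y)$ part.} Let $\xi=u_s\,ds+u_\theta\,d\theta$ be the sectional $1$-form with renormalised representative $(X,Y)$.
\begin{itemize}
\item By the identity preceding \eqref{eq:dSK}, $\norma{(X,Y)}_\square=\norma{\xi}_{L^2(S_z)}$.
\item By \eqref{eq:dSK} and \eqref{eq:dSL}, $\norma{K(X,Y)}^2_\square+\norma{L(X,Y)}^2_\square=\norma{d_S\xi}^2+\norma{\delta_S\xi}^2$.
\item The boundary condition $Y=0$ at $s\in\{1/2,1\}$ says that $\xi$ has vanishing tangential trace on $\p S_z$.
\item $P_h$ is the $L^2(S_z)$-orthogonal projection onto the one-dimensional space $\cH^1_{\rm rel}(S_z)$ spanned by $d_S\log r$, whose normalised representative is $h$. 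Hence $P_\perp(X,Y)$ represents the component $\xi_\perp$ of $\xi$ orthogonal to the harmonic field.
\end{itemize}
By the sectional Hodge decomposition I write $\xi_\perp=d_S\phi+\star_S d_S\psi$, with $\phi\in H^1_0(S_z)$ and $\p_\nu\psi=0$, $\int_{S_z}\psi=0$. Then
\[
\norma{d_S\xi}^2+\norma{\delta_S\xi}^2=\norma{\Delta\phi}^2+\norma{\Delta\psi}^2 .
\]
Expanding in eigenfunctions gives
\[
\norma{d_S\phi}^2\le\norma{\Delta\phi}^2/\lambda_1^{D}(S_z),\qquad \norma{d_S\psi}^2\le\norma{\Delta\psi}^2/\mu_2^{N}(S_z).
\]
So it suffices to show that both $\lambda_1^D(S_z)$ and $\mu_2^N(S_z)$ are at least $c\,\rho(z)^{-2}$. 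After rescaling to outer radius $1$, this means lower bounds on the first Dirichlet and first nonzero Neumann eigenvalues of $A_\varepsilon=\{\varepsilon<|x|<1\}$ that are uniform for small $\varepsilon>0$.
\begin{itemize}
\item The Dirichlet bound follows from domain monotonicity: $\lambda_1^D(A_\varepsilon)\ge\lambda_1^D(D)=j_{0,1}^2$.
\end{itemize}

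\textbf{The $Z$ part.} Using $\p_s=2Mr\,\p_r$, I set $w=Z/r$. Then
\[
TZ=r\,\p_r w,\qquad GZ=\p_\theta Z/r=\p_\theta w .
\]
Since $ds\,d\theta=dx/(2Mr^2)$, this gives
\[
\norma{Z}^2_\square=\frac{1}{2M}\int_{S_z}|w|^2\,dx,\qquad \norma{TZ}^2_\square+\norma{GZ}^2_\square=\frac{1}{2M}\int_{S_z}|\nabla w|^2\,dx .
\]
The condition $Z=0$ at $s\in\{1/2,1\}$ means $w\in H^1_0(S_z)$. The factor $1/(2M)$ cancels, and the Dirichlet Poincaré inequality on $S_z\subset B(0,\rho(z))$ yields $\norma{Z}^2_\square\le\rho(z)^2 j_{0,1}^{-2}\big(\norma{TZ}^2_\square+\norma{GZ}^2_\square\big)$, uniformly in $z$. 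Adding this to the $(X,Y)$ estimate gives \eqref{eq:transverseTETMest}, with $c_0=\min\{j_{0,1}^2,\ \inf_{\varepsilon}\mu_2^N(A_\varepsilon)\}$. Restricting to $z\ge z_0$ only guarantees $\varepsilon=\e^{-M(z)}$ is small, for example below $1/2$, which is where the uniform bound below applies. The constant $c_0$ does not depend on $M$.

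\textbf{Main obstacle: uniform Neumann bound.} The delicate point is the uniform lower bound on $\mu_2^N(A_\varepsilon)$, because Neumann eigenvalues are not monotone in the domain. I would first try separation of variables. For the $n$-th Fourier mode in $\theta$ with $n\ne0$, the Hardy-type term $n^2/r^2$ controls the function near the hole, which reduces the problem to a one-dimensional comparison with the disk. For $n=0$, radial functions with zero mean on $A_\varepsilon$ satisfy a weighted $1$D Poincaré inequality with weight $r$ on $(\varepsilon,1)$. Its constant can be bounded by Muckenhoupt's criterion, because $\int_\varepsilon^1 r\,dr$ stays bounded while the hole only adds a set of small weight.

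If that becomes messy, the fallback is the known convergence of Neumann spectra of $D\setminus \overline{B_\varepsilon}$ to those of $D$ as $\varepsilon\to0$ (Rauch–Taylor), which gives $\mu_2^N(A_\varepsilon)\to j_{1,1}'^{\,2}>0$. Combined with continuity and positivity of $\mu_2^N(A_\varepsilon)$ for $\varepsilon\in(0,1/2]$, this gives a uniform positive lower bound.
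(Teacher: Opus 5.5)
Your proof is correct and follows the same route as the paper. For the $(X,Y)$ part, both use the sectional Hodge decomposition with Dirichlet and Neumann spectral bounds; for the $Z$ part, both substitute $Z\mapsto Z/r$ and apply the Dirichlet Poincar\'e inequality on $S_z$.

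Where you go beyond the paper is the uniformity in $\delta(z)/\rho(z)=\e^{-M(z)}$, which the paper skips by asserting that the eigenvalues of $S_z$ simply scale like $\rho(z)^{-2}$. Your separation-of-variables route closes this directly. The modes $n\neq 0$ have Rayleigh quotient at least $n^2\ge 1$, since $r\le 1$. The radial zero-mean modes have quotient at least $4$, by Cauchy--Schwarz and $\int_0^1 r\log(1/r)\,dr=\tfrac14$. Hence $\mu_2^N(A_\varepsilon)\ge 1$ for every $\varepsilon\in(0,1)$, and the Rauch--Taylor fallback is unnecessary.
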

\begin{proof}
Fix $z \geq z_0$. We will represent $\xi = (X,Y)$ at fixed $z$ by means of a suitable orthonormal basis of $L^2(S_z)^2$ provided by the Hodge decomposition in the section $S_z$. Let $(\phi_j)_{j \geq 1}$ be the orthonormal basis of $L^2(S_z)$ made of Dirichlet Laplacian eigenfunctions, that is $- \Delta_D \phi_j = \la_j^D \phi_j$, $j \geq 1$. Let $(\psi_k)_{k \geq 1}$ be the Neumann Laplacian eigenfunctions with $\psi_1$ the first non-constant eigenfunction. Then $(d_S \phi_j)_j \oplus (\star_S d_S \psi_k)_k \oplus \{h\}$ is an orthogonal basis of $L^2(S_z)^2$. Thus we can write
\[
\xi = \sum_j (\xi, d_S \phi_j) d_S \phi_j + \sum_k (\xi, \star_S d_S \psi_k) \star_S d_S \psi_k + (\xi, h) h,
\]
Thus
\[
\norma{\xi}^2 = \sum_j \la_j^D |(\xi, d_S \phi_j)|^2 + \sum_k \la_k^N |(\xi, \star_S d_S \psi_k)|^2 + |(\xi, h)|^2 \norma{h}^2.
\]
Furthermore
\[
\norma{d_S \xi}^2 = \sum_k (\la_k^N)^2 |(\xi, \star_S d_S \psi_k)|^2, \quad \norma{\delta_S \xi}^2 = \sum_j (\la_j^D)^2 |(\xi, d_S \phi_j)|^2
\]
Now we note that $\la_j^{\sharp}(S_z)= c^{\sharp} \rho(z)^{-2}$, for all $j \geq 1$, where $\sharp \in \{D, N\}$, as the Laplacian eigenvalues with either Neumann or Dirichlet boundary conditions rescale as $|S_z|^{-1} = \pi^{-1}\rho(z)^{-2} (1 - \frac{\delta(z)^2}{\rho(z)^2})^{-1}$. Recalling \eqref{eq:dSK}, \eqref{eq:dSL}, we conclude that
\[
\norma{K(X,Y)}_{\square}^2 + \norma{L(X,Y)}^2_{\square} \geq c \rho(z)^{-2} \norma{P_\perp(X,Y)}_{\square}^2.
\]

We need to consider now the longitudinal component $Z$. Define $v(r, \theta) = u_z = \frac{Z}{\sqrt{2M} r}$. Since $Z = 0$ at $s = 1/2, 1$, $v \in H^1_0(S_z)$ and 
\[
\p_r v = \frac{1}{\sqrt{2M}} \bigg( \frac{\p_r Z}{r} - \frac{Z}{r^2} \bigg) = \frac{1}{\sqrt{2M} r} \bigg( \frac{\p_s Z}{2 M r} - \frac{Z}{r}   \bigg) = \frac{TZ}{\sqrt{2M} r},
\]
\[
\frac{\p_\theta v}{r} = \frac{GZ}{\sqrt{2M} r}.
\]
Hence, recalling that $r dr = 2 M r^2 ds$, we have
\[
\int_{S_z} |v|^2 \, r dr d\theta = \int_{\square} |Z|^2\, dsd\theta, \;\;\;\;\;
\int_{S_z} |\nabla v|^2 \, r dr d\theta = \int_{\square} \big( |TZ|^2 + |GZ|^2 \big) \, ds d\theta.
\]
The Poincar\'{e} inequality in $S_z$ implies now that
\begin{align*}
 \int_{\square} \big( |TZ|^2 + |GZ|^2 \big) \, ds d\theta & = \int_{S_z} |\nabla v|^2 \, r dr d\theta \\
  &\geq \frac{C}{\rho(z)^2}\int_{S_z} |v|^2 \, r dr d\theta = \frac{C}{\rho(z)^2}\int_{\square} |Z|^2\, dsd\theta,
\end{align*}
which completes the proof.
\end{proof}

Up to this point, we did not use the specific choice of $\delta$ and $\rho$ in the definition of $\Omega$. In the following step instead we will use properties related to the particular choice of functions $\delta$ and $\rho$. For further use, it is then better to write down explicitly the value of the several geometric parameters in play. Recall that $\alpha, \gamma$ are positive parameters and that
\[
\rho(z) = \e^{-z}, \quad \delta(z) = \e^{-z - \e^{2 \gamma z^\alpha}}, \quad M(z) = \log(\rho(z)) - \log(\delta(z)) = \e^{2 \gamma z^\alpha}.
\]
Define now
\[
\zeta(s,z) := 2 (1-s) M(z)
\]
for $s \in (1/2,1)$, $z > 1$. Then we have
\[
p(z) = \frac{M'(z)}{2M(z)} = \gamma \alpha z^{\alpha -1}, \quad \hat{\rho} = -1, \quad \hat{\delta}(z) = - 1 - 2 \gamma \alpha z^{\alpha -1} M(z),
\]
\[
b(s,z) = -1 -2 \gamma \alpha z^{\alpha -1} \zeta(s,z), \quad \kappa(s,z) = - \rho(z)( 1 + 2 \gamma \alpha z^{\alpha -1} ) \e^{-\zeta(s,z)},
\]
and finally define 
\[
\omega(s,z) = (D_B \kappa)(s,z) = r(s,z) (D_B b)(s,z) = 2 \gamma \alpha r(s,z) (z^{\alpha -1}  - (\alpha -1) z^{\alpha -2} \zeta(s,z)).
\]
Note that, for any choice of $\gamma$ and $\alpha$ we have
\begin{align}
&\sup_{\substack{z > R \\ 1/2 < s < 1}} |\kappa(s,z)| \to 0, \label{eq:decayparameters} \\
&\sup_{\substack{z > R \\ 1/2 < s < 1}} |\omega(s,z)| \to 0, \label{eq:decayparameters2}\\
&\:\:\,\sup_{z > R } \rho^2(z) p(z)^2 \to 0, \label{eq:decayparameters3}
\end{align}
as $R \to + \infty$. 

Finally define for future usage the following forms: 
\begin{equation}\label{def:Q}
\begin{split}
&\cQ[X,Y,Z] = \hspace{-1mm} \int_C \hspace{-1mm}\big( |K(X,Y)|^2 + |H X - TZ + 2 p X + B \p_s X |^2 \\
&\hspace{1.8cm}+ |G Z - H Y - \beta \p_\theta X|^2  + |L(X,Y) + H Z - \kappa H X - \omega X |^2 \big) \, dsd\theta dz,\\
&\dom(Q) = \{(X,Y,Z) \in L^2(C)^3 : \: Y = Z = 0 \, {\rm at} \, s = 1/2, 1, \, Q(X,Y,Z) < \infty \: \}, \\[0.15cm]
&\cQ_0[X,Y,Z] = \int_C \big( |K(X,Y)|^2 + |H X - TZ |^2 + |GZ - HY|^2 \\
&\hspace{5.5cm} + |L(X,Y) + HZ|^2 \big) \, ds d\theta dz, \\
&\dom(Q_0) = \{ (X,Y,Z) \in L^2(C)^3 : Y = Z = 0 \, {\rm at} \, s = 1/2, 1, \, Q_0(X,Y,Z) < \infty \: \}, \\[0.15cm]
&q_p(\varphi) = \int_1^\infty |\varphi'(z) + p(z) \varphi(z)|^2 \, dz, \\
&\dom(q_p) = \{ \varphi \in L^2(1,\infty) \, : \, \varphi \in H^1_{\rm loc}(1, \infty), \varphi' + p \varphi \in L^2(1,\infty), \varphi(1) = 0\}.
\end{split}
\end{equation}

Since $H X + 2 p X + B \p_s X = \p_z X + p X$, $W = Z - \kappa X$, and $T(\kappa X) = 2pX + B\partial_s X$ we see immediately that   
\[
\begin{split}
\cQ[X,Y,Z] &= \int_C \big ( |K(X,Y)|^2 + |HX - TW|^2 + |G W - H Y|^2 \\
&\hspace{5.5cm}+ |L(X,Y) + HW|^2 \big) \, dsd\theta dz \\
&= \int_\Omega \big( |\curl E|^2 + |\Div E|^2 \big)\, dx
\end{split}
\]
where $E = \cJ^{-1}(U)$, $U =(X,Y, W)$.

\subsection{Step 4: Auxiliary lemmata}
We list here three technical lemmata that we will use in the sequel. Their proofs are postponed to the appendix.

\begin{lemma}\label{lemma:aux1}
Let $p\in C^1([R,\infty);\R)$, let $p\ge0$, and suppose that, for some
constant $0 \leq c <1$,
\begin{equation}
 p'(z)\le c p(z)^2,
 \qquad z\ge R.
 \label{eq:pprime-kappa}
\end{equation}
Let $a\in\dom q_p = \{f \in L^2(1, \infty) : f \in H^1_{\rm loc}(1, \infty), f' + p f \in L^2(1,\infty), f(1) = 0\}$, and assume that $a=0$ almost everywhere on $(1,R)$.  Then $pa\in L^2(R,\infty)$ and
\begin{equation}
 (1-c)\int_R^\infty p(z)^2|a(z)|^2\, dz
 \le q_p[a].
 \label{eq:tail-square-cutoff}
\end{equation}
\end{lemma}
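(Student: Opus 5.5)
Write $g := a' + p a \in L^2(1,\infty)$, so that $q_p[a] = \|g\|^2_{L^2(1,\infty)}$. The plan is to expand $|g|^2$ on a truncated interval $(R,N)$, integrate the cross term by parts, and control it with the hypothesis \eqref{eq:pprime-kappa}.

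\textbf{Step 1 (a Hardy-type identity on a finite interval).} Fix $N > R$. Since $a\in H^1_{\rm loc}$ and $p\in C^1$, the product $p|a|^2$ is absolutely continuous on $[R,N]$ with
\[
\frac{d}{dz}\big(p|a|^2\big) = p'|a|^2 + 2p\,\re(\bar a a').
\]
Expanding $|g|^2 = |a'|^2 + 2p\,\re(\bar a a') + p^2|a|^2$ and substituting $2p\,\re(\bar a a') = (p|a|^2)' - p'|a|^2$ gives
\[
\int_R^N |g|^2\,dz = \int_R^N |a'|^2\,dz + p(N)|a(N)|^2 - p(R)|a(R)|^2 + \int_R^N (p^2 - p')|a|^2\,dz .
\]
We have $a(R)=0$, because $a$ is continuous and vanishes a.e. on $(1,R)$. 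We also have $p(N)|a(N)|^2 \ge 0$ since $p \ge 0$. Dropping these boundary terms and $\int|a'|^2$, and using $p^2 - p' \ge (1-c)p^2$, we get
\[
(1-c)\int_R^N p^2|a|^2\,dz \le \int_R^N |g|^2\,dz \le q_p[a].
\]

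\textbf{Step 2 (pass to the limit).} Let $N\to\infty$ by monotone convergence. This yields \eqref{eq:tail-square-cutoff}. When $c<1$ it also shows $pa\in L^2(R,\infty)$. When $c=1$ the inequality is trivial, and the integrability claim is then only needed for $c<1$, which is the assumed range.

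\textbf{Main obstacle.} The only delicate point is the sign of the boundary term at $N$. We must not try to show $p(N)|a(N)|^2\to 0$, which could fail since we do not know a priori that $pa\in L^2$. This is why we work on finite intervals and simply discard the term using $p\ge0$.

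If $a$ is complex valued, the same computation goes through with $\re(\bar a a')$ as written above. Real-valuedness of $p$ ensures the cross term is exactly $p\,(|a|^2)'$.
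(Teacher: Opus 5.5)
Your proof is correct. Its core computation is the same as the paper's: the integration-by-parts identity $\|a'+pa\|^2 = \|a'\|^2 + \int (p^2-p')|a|^2$ plus boundary terms, combined with $p^2-p'\ge (1-c)p^2$. The two arguments differ in how they handle the boundary term at infinity.

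The paper multiplies $a$ by smooth cutoffs $\chi_M$, equal to $1$ on $[R,M]$, vanishing beyond $M+1$, with $\|\chi_M'\|_\infty$ bounded uniformly in $M$. For $\chi_M a$ the integration by parts then produces no boundary terms at all. It next shows that $q_p[\chi_M a]\to q_p[a]$, since $(\chi_M a)'+p\chi_M a-(a'+pa)=(\chi_M-1)(a'+pa)+\chi_M' a\to 0$ in $L^2$; this only needs $a\in L^2$. It concludes with Fatou's lemma.

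You instead integrate on $[R,N]$. You use $a(R)=0$, from continuity of the $H^1_{\rm loc}$ representative, discard $p(N)|a(N)|^2$ by its sign, and pass to the limit by monotone convergence. Your route is shorter and avoids the approximation step, and you correctly identify that one must not try to show $p(N)|a(N)|^2\to0$.

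Your argument genuinely uses the hypothesis $p\ge 0$, whereas the paper's cutoff argument never uses the sign of $p$. The paper therefore actually proves the inequality under $p'\le cp^2$ alone. Since $p\ge0$ is assumed here, and $p=\gamma\alpha z^{\alpha-1}>0$ in the application, both proofs are valid for the lemma as stated.
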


\begin{lemma}\label{lemma:aux2}
With $\cQ_0$ as in \eqref{def:Q}, the following exact identity holds:
\[
\cQ_0[X,Y,Z] = \int_C \big( |K(X,Y)|^2 + |H X|^2 + |TZ |^2 + |GZ|^2 + |HY|^2 \hspace{3cm}\]
\[ \hspace{6cm}+ |L(X,Y)|^2 + |HZ|^2 \big) \, ds d\theta dz,
\]
for all smooth vector fields $U = (X,Y,Z)$ with $Y = 0$ and $Z = 0$ at $s = 1/2$ and $s = 1$.
\end{lemma}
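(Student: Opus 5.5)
Expanding each of the four squares in $\cQ_0$ gives
\[
|HX - TZ|^2 = |HX|^2 + |TZ|^2 - 2\re (HX\, \overline{TZ}),
\qquad
|GZ - HY|^2 = |GZ|^2 + |HY|^2 - 2\re (GZ\, \overline{HY}),
\]
\[
|L(X,Y) + HZ|^2 = |L(X,Y)|^2 + |HZ|^2 + 2\re (L(X,Y)\, \overline{HZ}).
\]
The lemma is therefore equivalent to the vanishing of the total cross term
\[
\cI := \re \int_C \Big( - HX\,\overline{TZ} - GZ\,\overline{HY} + L(X,Y)\,\overline{HZ} \Big)\, ds\,d\theta\,dz .
\]
Writing $L(X,Y) = \frac{\p_s X}{2Mr} + GY$, I would split $\cI$ into an $X$-part, $\re\int ( \frac{\p_s X}{2Mr}\overline{HZ} - HX\,\overline{TZ})$, and a $Y$-part, $\re\int (GY\,\overline{HZ} - GZ\,\overline{HY})$. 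I will show that each vanishes separately, working with smooth fields compactly supported in $z$, as in the setting of the lemma.

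\textbf{$Y$-part.} The operator $G = r^{-1}\p_\theta$ is skew-symmetric in $L^2(C)$, since $r$ does not depend on $\theta$ and $\theta$ is periodic. The operator $H$ is skew-symmetric on fields vanishing at $s=1/2,1$ by \eqref{eq:Hadj}, and both $Y$ and $Z$ satisfy this. The key point is that $G$ and $H$ commute. Indeed, $D_B r = 0$ gives $D_B(r^{-1}\p_\theta f) = r^{-1}\p_\theta D_B f$, and neither $p$ nor $B$ depends on $\theta$. Hence
\[
(GY, HZ) = -(HGY, Z) = -(GHY, Z) = (HY, GZ),
\]
so $\re[(GY,HZ) - (GZ,HY)] = \re[(HY,GZ) - \overline{(HY,GZ)}] = 0$.

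\textbf{$X$-part.} Here $X$ carries no boundary condition, but $Z$ does, so every integration by parts must move derivatives onto $Z$. Set $A := \frac{1}{2Mr}\p_s$, so that $T = A - \frac{1}{r}$ and the $X$-part equals $\re[(AX, HZ) - (HX, AZ) + (HX, r^{-1}Z)]$.

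1. First move $H$ off $X$. The adjoint identity \eqref{eq:Hadj} is still valid here because $Z$ and $AZ$ are the second factors. However, $AZ$ need not vanish at the boundary, so I must track the $s$-boundary term coming from $B\p_s$, namely $-[B X\overline{AZ}]_{s=1/2}^{1}$.

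2. Then move $A$ off $X$ in $(AX, HZ)$. This is legitimate since $HZ = D_B Z - pZ$ and, because $Z=0$ on the boundary, $D_B Z = \p_z Z - B\p_s Z = -B\p_s Z$ there. This produces the term $[ \frac{1}{2Mr} X\,\overline{(-B\p_s Z)}]$, which cancels exactly the boundary term from step 1.

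3. After both moves, what remains is $\re(X, -A^*HZ + H A Z - H(r^{-1}Z))$, modulo the cancelled boundary terms. So the $X$-part reduces to the commutator-type identity
\[
-A^*H + HA - H r^{-1} = 0 \quad\text{acting on } Z,
\]
to be checked modulo terms whose real part vanishes. To verify it, I would compute $A^* f = -\p_s(f/(2Mr)) = -A f + \frac{1}{r} f$, using $\p_s r = 2Mr$. This gives
\[
-A^*H + HA - Hr^{-1} = AH - r^{-1}H + HA - Hr^{-1} = [A,H]+2HA - 2r^{-1}H\ \text{(rearranged)},
\]
and I expect the exact value of $[A,H]$ to follow from $D_B r = 0$, the relation $\p_s B = M'/M = 2p$, and $\p_z(1/(2M)) = -p/M$.

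\textbf{Main obstacle.} This last commutator calculation, together with the careful bookkeeping of the $p$-terms and the boundary terms, is where I expect the real work. If the algebra leaves a residual term, I would instead redo the $X$-part symmetrically: write both sides in terms of the original one-form coordinates $u_s, u_z$ and use the fact that $\cQ_0$ is the $\beta$-free part of the form $\cQ$ in Lemma \ref{lemma:curldivcyl}. There the cross terms correspond to the classical integration-by-parts identity $\int |\curl|^2 + |\Div|^2 = \int |\nabla|^2$ for fields in product-like coordinates, whose boundary terms vanish because of the conditions $Y=Z=0$.
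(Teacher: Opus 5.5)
Your overall architecture is the paper's: expand the squares, dispose of the $Y$-part using skew-symmetry of $G$ and $H$ together with $[G,H]=0$, and in the $X$-part cancel the face terms using $HZ=-B\p_s Z$ and $AZ=TZ=\p_s Z/(2Mr)$ on $s=1/2,1$. Your $Y$-part and your boundary bookkeeping are correct.

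The gap is in step 3 of the $X$-part, which is where the whole content of the lemma lies. First, the sign is wrong. With $A^*f=-\p_s(f/(2Mr))$ one has $(AX,HZ)=(X,A^*HZ)+\text{(face term)}$. So the remainder is $(X,(A^*H+HA-Hr^{-1})Z)$, not $(X,(-A^*H+HA-Hr^{-1})Z)$. Your own computation $A^*=-A+r^{-1}$ says exactly $A^*=-T$. This is the paper's observation $\p_s(\ov{\phi}/(2Mr))=\ov{T\phi}$, and with it the correct remainder is $(X,(HT-TH)Z)$. With your sign the operator is instead $TH+HT$, which equals $2HT$ once $[H,T]=0$ is known. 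Since $X$ carries no boundary condition, $\re(X,2HTZ)$ does not vanish in general (take $X=HTZ$). So the identity you set out to verify is false, and carrying out your plan would leave precisely the residual you were worried about.

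Second, the commutation $[H,T]=0$ is the decisive step, and you only say you expect it. Equivalently $[A,H]=0$, since $H(r^{-1}f)=r^{-1}Hf$ by $D_Br=0$. It has to be proved. Use $D_B(1/r)=0$, $D_B(1/(2M))=-p/M$ (from $M'=2pM$), and $[D_B,\p_s]=(\p_sB)\p_s=2p\p_s$. Then
\[
D_B(Tf)=-\frac{p}{Mr}(\p_s f-2Mf)+\frac{1}{2Mr}\bigl(\p_s D_Bf+2p\p_s f-2M'f-2MD_Bf\bigr)=T(D_Bf),
\]
because the $p$-terms cancel exactly. Also $T(pf)=pTf$, since $p$ does not depend on $s$. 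With this and the corrected sign, the $X$-part vanishes identically, even before taking real parts; this is the paper's Steps 1--3.

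Your fallback, identifying $\cQ_0$ with a Euclidean identity $\int|\curl|^2+|\Div|^2=\int|\nabla|^2$ in ``product-like coordinates,'' is not justified as stated. It does not replace this computation.
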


\begin{lemma}\label{lemma:aux3}
Let $z > 1$ be fixed. There exists a constant $C = C(\alpha, \gamma)$ such that
\begin{equation}\label{eq:transversecurlest}
\norma{B \p_s X}^2_{\square} + \norma{\beta \p_\theta X}_{\square}^2 \leq C \rho(z)^2 ( 1 + (2 \gamma \alpha z^{\alpha -1})^2)(\norma{K(X,Y)}^2_{\square} + \norma{L(X,Y)}_{\square}^2)
\end{equation}
for all smooth vector fields $(X,Y)$ in $H^1(\square)^2$ with $Y = 0$ on $\p \square$. 
\end{lemma}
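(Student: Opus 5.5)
The plan is to factor out the geometric weight, so that the lemma reduces to a two-dimensional Gaffney-type estimate on the annular section $S_z$ that is uniform in the hole size.

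\textbf{Step 1: factor out the weight.} Since $B = \beta/(2M)$,
\[
B\p_s X = (\beta r)\,\frac{\p_s X}{2Mr}, \qquad \beta\p_\theta X = (\beta r)\,\frac{\p_\theta X}{r}.
\]
For our choice of $\rho,\delta$ we have $\beta = -1-2\gamma\alpha z^{\alpha-1}\zeta$ and $r=\rho\,\e^{-\zeta}$ with $\zeta\ge 0$. Because $\sup_{\zeta\ge0}(1+\zeta)\e^{-\zeta}<\infty$,
\[
|\beta r| \le C\rho(z)\big(1+2\gamma\alpha z^{\alpha-1}\big)
\]
uniformly in $s$, and this accounts for the prefactor in \eqref{eq:transversecurlest}. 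It therefore suffices to prove
\begin{equation*}
\Big\|\frac{\p_s X}{2Mr}\Big\|_\square^2+\Big\|\frac{\p_\theta X}{r}\Big\|_\square^2 \le C\big(\|K(X,Y)\|_\square^2+\|L(X,Y)\|_\square^2\big)
\end{equation*}
with $C$ independent of $z$, and in particular of $M(z)$.

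\textbf{Step 2: translate back to $S_z$.} Write $X=\sqrt{2M}\,rE_r$, $Y=\sqrt{2M}\,rE_\theta$, use $\p_s=2Mr\p_r$ and $r\,dr = 2Mr^2\,ds$. Then the left-hand side becomes
\[
\int_{S_z}\Big(|\p_r E_r + E_r/r|^2+|\p_\theta E_r|^2/r^2\Big)\,dA .
\]
By \eqref{eq:dSK}--\eqref{eq:dSL}, the right-hand side is $\|\curl_S E'\|^2+\|\Div_S E'\|^2$ for the planar field $E'=(E_r,E_\theta)$, which satisfies $E_\theta=0$ at $r=\delta,\rho$. The reflex would be Reilly's identity, but the inner circle has negative curvature $-1/\delta$, so the boundary term blows up. Instead I would expand in Fourier modes in $\theta$.

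\textbf{Step 3: Fourier expansion.} Set $A=rE_r^{(n)}$ and $\mathcal B = rE_\theta^{(n)}$ for the $n$-th mode, so that $\mathcal B=0$ at both endpoints. Then
\[
\|\Div\|^2+\|\curl\|^2=\int\Big(|A'|^2+|\mathcal B'|^2+n^2|A|^2/r^2+n^2|\mathcal B|^2/r^2\Big)\frac{dr}{r}+\text{cross terms}.
\]
After one integration by parts, using $\mathcal B=0$ at the boundary, the cross terms reduce to $4\,\re\big(\I n\int\mathcal B\bar A\,r^{-3}dr\big)$; the purely real part pairs with $\I n$ and drops out. The target quantity is $\int(|A'|^2+n^2|A|^2/r^2)\,dr/r$.

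For $n=0$ there is no cross term and the estimate is immediate. For $|n|\ge2$, apply Young's inequality to the cross term: it is absorbed by $\tfrac12 n^2|A|^2/r^3$ together with a multiple of $|\mathcal B|^2/r^3$. That multiple is dominated by $n^2|\mathcal B|^2/r^3$ once $n^2\ge 4$, up to a fixed fraction.

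\textbf{Step 4: the main obstacle, $|n|=1$.} Here the cross term is critical. I would use the weighted Hardy inequality $\int|\mathcal B|^2r^{-3}dr\le\int|\mathcal B'|^2r^{-1}dr$ for $\mathcal B$ vanishing at both ends. In the variable $t=\log r$ this reads $\int g_t^2\e^{-2t}\ge\int g^2\e^{-2t}$, and the constant is independent of the interval length, hence of $M$. This gives the extra $\int|\mathcal B'|^2/r$ needed to absorb the cross term with a Young's split. If the constants close only with equality, I would refine the estimate by keeping the $\mathcal B'$ and $\mathcal B/r$ terms together and completing a square, since $\mathcal B$ is pinned at both ends.

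Summing over $n$ by Parseval and combining with Step 1 gives \eqref{eq:transversecurlest}.
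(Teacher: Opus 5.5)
\textbf{The gap is in Step 1.} Bounding $|\beta r|$ by its supremum throws away the decay in
\[
|\beta r| = |1+2\gamma\alpha z^{\alpha-1}\zeta|\,\rho\,\e^{-\zeta}.
\]
The unweighted estimate you reduce to is false with any constant independent of $M$. Its left-hand side contains the two-dimensional Hardy term $\int_{S_z}|\p_\theta E_r|^2r^{-2}\,dA$, whose constant blows up like $M^2=\log^2(\rho/\delta)$ as the hole shrinks.

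\textbf{Counterexample.} On $S_z$ take
\[
E' = \phi(r)\,(1,\I) + \psi(r)\,\e^{2\I\theta}(1,-\I),\qquad \phi = \log(\rho/r)+\tau,\quad \psi=\tau\rho^2r^{-2},\quad \tau=\frac{M}{\e^{2M}-1}.
\]
\begin{enumerate}[(a)]
\item The second summand equals $\tau\rho^2(x_1-\I x_2)^{-2}(1,-\I)$, an antiholomorphic function times $(1,-\I)$. Hence it is curl- and divergence-free.
\item One has $E_\theta=\I(\phi-\psi)\e^{\I\theta}$, which vanishes at $r=\rho$ and at $r=\delta$.
\item One has $|\Div E'|^2+|\curl E'|^2=2|\phi'|^2=2r^{-2}$, so your right-hand side equals $4\pi M$.
\item One has $E_r=(\phi+\psi)\e^{\I\theta}$, so
\[
\int_{S_z}|\p_\theta E_r|^2r^{-2}\,dA\ \ge\ 2\pi\int_0^M\zeta^2\,d\zeta=2\pi M^3/3 .
\]
\end{enumerate}
The ratio is therefore at least $M^2/6$, with $M(z)=\e^{2\gamma z^\alpha}$. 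A constant of that size is also useless downstream: $\rho(z)^2M(z)^2=\e^{-2z+4\gamma z^\alpha}$ does not tend to zero for $\alpha>1$, so the $o_R(1)$ needed in Lemma \ref{lemma:estQQ0} is lost.

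This example is exactly your $|n|=1$ mode with $\mathcal B\approx \I A$. So Step 4 cannot be repaired by sharper constants or by completing a square. Even with Hardy constant $1$, your Young split would need $\epsilon<1$ and $4/\epsilon-1\le 1$ simultaneously, which is impossible. A minor point: $|n|=2$ is also borderline for plain Young, since there $n^2(a^2+b^2)-4|n|ab=4(a-b)^2$.

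\textbf{The missing idea is to keep the decay of the weight.} One has
\[
|\beta r|^2\le C\rho^2\big(1+(2\gamma\alpha z^{\alpha-1})^2\big)\e^{-3\zeta/2},
\]
so you only need your reduced estimate with the extra weight $(r/\rho)^{3/2}$ on the left.

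The paper proves this mode by mode. Set $F=X_m+\I Y_m$ and $G=X_m-\I Y_m$. Then
\[
r(L_m+\I K_m)=-F'+mF,\qquad r(L_m-\I K_m)=-G'-mG .
\]
Multiply $|{-F'}+mF|^2+|{-G'}-mG|^2$ by $\e^{\zeta/2}$ and integrate by parts. This produces the coefficients $m^2\pm m/2\ge m^2/2$, and the boundary terms cancel because $F=G$ at $\zeta\in\{0,M\}$. Only afterwards is $\e^{\zeta/2}\le \e^{2\zeta}=\rho^2/r^2$ used to reach $\norma{K}^2+\norma{L}^2$.

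With the natural weight $\e^{2\zeta}$, which is what your unweighted reduction amounts to, the same computation gives the coefficient $m^2-2m=-1$ at $m=1$. That is precisely the obstruction you ran into.
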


\subsection{Step 5: the approximate Maxwell inequality}
The aim of this final subsection is to prove a lower bound on the energy $\norma{\curl E}^2 + \norma{\Div E}^2$ of any $E \in X_N(\Omega)$, $\norma{E} = 1$, with support in the $z \geq R$ tail of the horn, possibly up to small errors in $R$. We begin with two useful estimates.

\begin{lemma} \label{lemma:estQQ0}
{Let $E \in \cD_N(\Omega)$ be any smooth vector field with support in $\{ x \in \Omega : z > R \}$ and let with $ U = \cJ(E) = (X,Y,W) = (X,Y, Z - \kappa X)$. The following statements hold.}
\begin{enumerate}[label=(\roman*)]
\item if $0 < \alpha < 1$ there exists a constant $C_1 > 0$ {independent of $E$} {and $R$} such that
$$\cQ_0[X,Y,Z] \leq C_1 (\cQ[X,Y,Z] + \norma{X}^2) = C_1 (\norma{\curl E}^2 + \norma{\Div E}^2 + \norma{E}^2)$$. \vspace{-0.3cm}
\item if $\alpha \geq 1$ there exists a constant $C_2 > 0$  {independent of $E$} {and $R$} such that 
$$\cQ_0[X,Y,Z] \leq C_2 (\cQ[X,Y,Z]) = C_2 (\norma{\curl E}^2 + \norma{\Div E}^2 ) $$. \vspace{-0.4cm}
\end{enumerate}

\end{lemma}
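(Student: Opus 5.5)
The plan is to compare $\cQ$ and $\cQ_0$ term by term. By \eqref{def:Q}, the integrands of $\cQ$ are those of $\cQ_0$ perturbed by four lower-order terms: $2pX + B\p_s X$ in the second, $-\beta\p_\theta X$ in the third, and $-\kappa HX - \omega X$ in the fourth. The triangle inequality then gives
\[
\cQ_0^{1/2} \le \cQ^{1/2} + \norma{2pX + B\p_s X} + \norma{\beta \p_\theta X} + \norma{\kappa HX} + \norma{\omega X}.
\]
The goal is to bound each error norm by $\eps(R)\,\cQ_0^{1/2}$ with $\eps(R)\to 0$, plus terms controlled by $\cQ$ (or by $\norma{X}$ when $0<\alpha<1$). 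We then take $R \ge R_0$ so large that the $\eps$-terms can be absorbed into the left-hand side. For $1<R<R_0$ the constants only worsen by a fixed factor, since the supports of interest go to infinity, so we assume $R\ge R_0$ throughout.

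\emph{The transverse derivative terms.} Lemma \ref{lemma:aux3}, integrated in $z>R$, gives
\[
\norma{B\p_s X}^2 + \norma{\beta\p_\theta X}^2 \le C \sup_{z>R}\rho(z)^2\big(1+4p(z)^2\big)\,\big(\norma{K}^2+\norma{L}^2\big).
\]
The supremum tends to zero by \eqref{eq:decayparameters3} and $\rho\to 0$, and $\norma{K}^2+\norma{L}^2 \le \cQ_0$ by Lemma \ref{lemma:aux2}. Lemma \ref{lemma:aux2} also gives $\norma{HX}^2 \le \cQ_0$, so $\norma{\kappa HX} \le \sup_{z>R,s}|\kappa|\,\cQ_0^{1/2}$, which is small by \eqref{eq:decayparameters}. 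Finally, $\norma{\omega X} \le \sup|\omega|\,\norma{X}$ with $\sup|\omega|\to 0$ by \eqref{eq:decayparameters2}. In the case $0<\alpha<1$ this is simply kept as the $\norma{X}^2$ term on the right-hand side, and $\norma{X}\le\norma{U}=\norma{E}$.

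\emph{The term $2pX$.} This is the main obstacle, because $p=\gamma\alpha z^{\alpha-1}$ is not small when $\alpha\ge1$ (it is unbounded for $\alpha>1$). I would not treat it perturbatively. Instead, split $X$ into its mean part $a(z)/\sqrt\pi$, where $a=\cA X$, and its transverse part $P_\perp$ as in \eqref{def:projectors}.

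For the transverse part, \eqref{eq:transverseTETMest} gives
\[
\norma{pP_\perp X}^2 \le C\sup_{z>R}\rho^2p^2\,\norma{K}^2+\norma{L}^2,
\]
which again is $o(1)\,\cQ_0$.

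For the mean part, I would compute the averaged integrand directly. One has $2pX + B\p_s X + HX = \p_z X + pX$, while on $h$ the operator $H$ acts like $\p_z - p$ up to terms involving $\p_s X$ (these come from $\p_s B = 2p$ and are already controlled). So the mean contributions to the two forms are $\int|a'+pa|^2$ in $\cQ$ versus $\int|a'-pa|^2$ in $\cQ_0$. Expanding and integrating by parts with $a=0$ near $z=R$ (and $a$ compactly supported) gives
\[
\int|a'-pa|^2 = \int|a'+pa|^2 + 2\int p'|a|^2 .
\]
Now $p\ge0$ and $p'\le c\,p^2$ with $c<1$ for every $\alpha$ and all large $z$: one has $p'\le 0$ if $\alpha\le 1$, and $p'/p^2 = O(z^{-\alpha})$ if $\alpha>1$. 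Lemma \ref{lemma:aux1} then bounds $2\int p'|a|^2$ by $C\,q_p[a]$, and $q_p[a]$ is at most a constant times the mean part of $\cQ$ plus the controlled errors. Carrying this out cleanly, and checking that the cross terms between the mean and transverse parts inside $H$ and $T$ produce only $\rho$-small errors, is where I expect most of the work.

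\emph{Controlling $\norma{X}$ when $\alpha\ge 1$.} In this case $p\ge \gamma\alpha>0$. Lemma \ref{lemma:aux1} gives
\[
\norma{a}^2 \le (\gamma\alpha)^{-2}\int p^2|a|^2 \le C\,q_p[a]\le C\,\cQ,
\]
while $\norma{P_\perp X}^2 \le C\rho(R)^2\cQ_0$. Hence $\norma{\omega X}^2 \le o(1)\,(\cQ+\cQ_0)$, so the $\norma{X}^2$ term is absorbed and $\cQ_0\le C_2\,\cQ$. When $\alpha<1$, $p\to0$ and no such lower bound on $p$ exists, which is why $\norma{X}^2$ must remain in (i).
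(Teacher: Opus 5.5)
Your skeleton is the paper's: the same three error terms $2pX+B\p_sX$, $\beta\p_\theta X$ and $\kappa HX+\omega X$. You handle everything except $2pX$ with Lemmas \ref{lemma:aux2} and \ref{lemma:aux3} together with \eqref{eq:decayparameters}--\eqref{eq:decayparameters3}, use smallness of $p$ for $0<\alpha<1$, and for $\alpha\ge 1$ split $pX=pP_hX+pP_\perp X$, absorbing the transverse piece via \eqref{eq:transverseTETMest}.

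The gap is the step that carries case (ii), the inequality $q_p[\cA X]\le\cQ$. You use it both in the mean-part comparison and in your last paragraph, but never prove it; you defer it to a check that ``cross terms inside $H$ and $T$ produce only $\rho$-small errors''. That is not how it goes. Since $HX+2pX+B\p_sX=\p_zX+pX$, the second integrand of $\cQ$ is exactly $\p_zX+pX-TZ$. Its $\square$-mean is $(\cA X)'+p\,\cA X-(h,TZ)_{L^2(\square)}$. The last term cannot be made small by any size estimate, because $\|TZ\|$ is controlled only by $\cQ_0^{1/2}$, with no small factor. What saves you is that it vanishes identically. Using $Z=0$ at $s=1/2,1$ and $\p_s(1/(2Mr))=-1/r$,
\[
\int_{1/2}^1 TZ\,ds=\int_{1/2}^1\Big(\frac{\p_sZ}{2Mr}-\frac{Z}{r}\Big)ds=\int_{1/2}^1\Big(\frac{Z}{r}-\frac{Z}{r}\Big)ds=0 .
\]
Cauchy--Schwarz against $h$ (with $\|h\|_{L^2(\square)}=1$) then gives $\|\p_zX+pX-TZ\|_{L^2(\square)}^2\ge|(\cA X)'+p\,\cA X|^2$ for a.e.\ $z$. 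Integrating in $z$ yields $\cQ\ge q_p[\cA X]$, which is \eqref{eq:estQqp} in the paper.

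With this in hand, your detour through $\int|a'-pa|^2=\int|a'+pa|^2+2\int p'|a|^2$ is unnecessary. An error term does not have to be $o(1)\,\cQ_0$ to be harmless; it may be $O(\cQ)$. Lemma \ref{lemma:aux1} gives $\|pP_hX\|^2=\int p^2|\cA X|^2\le(1-c)^{-1}q_p[\cA X]\le(1-c)^{-1}\cQ$, which is exactly the chain in your last paragraph. So $2pX$ can be treated perturbatively after all, and that is what the paper does.

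Separately, your assertion that for $1<R<R_0$ the constants ``only worsen by a fixed factor'' is unsupported, since the absorption needs smallness that is available only for large $R$. Like the paper's, your argument proves the lemma for $R\ge R_0$, which is all that is used afterwards.
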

\begin{proof}
Set $e_1 = 2pX + B \p_s X$, $e_2 = - \beta \p_\theta X$ and $e_3 = - \omega X - \kappa H X$, and $\tilde{U} = (X,Y,Z)$. We have $\norma{HX - TZ}^2 \leq 2 \norma{HX - TZ + e_1}^2 + 2 \norma{e_1}^2$, and arguing in a similar way with the squares involving $e_2$ and $e_3$ we deduce that
\[
\cQ_0[\tilde{U}] \leq 2 \cQ[\tilde{U}]+ 2 (\norma{e_1}^2 + \norma{e_2}^2 + \norma{e_3}^3).
\]
We now note that, {in view of Lemma \ref{lemma:aux3} and the definitions of $e_1$ and $e_2$,
there exists a constant $C$ independent of $R$ such that} 
\[
(\norma{e_1}^2 + \norma{e_2}^2 ) \leq C \norma{p X}^2 + o_R(1) (\norma{K(X,Y)}^2 + \norma{L(X,Y)}^2).
\]
 Moreover,
\[
\norma{e_3}^2 \leq o_R(1) ( \norma{X}^2 + \norma{H X}^2)
\]
due to \eqref{eq:decayparameters}. With the help of Lemma \ref{lemma:aux2}, the $o_R(1)$-terms can be moved to the left-hand side and absorbed in the expression for $\cQ_0$, for sufficiently large $R$. Thus, we obtain, for some $\eps \in (0,1)$,
\begin{equation}\label{proof:estQ0}
(1 - \eps) \cQ_0[\tilde{U}] \leq 2 \cQ[\tilde{U}] + C \norma{p X}^2.
\end{equation}
If $\alpha < 1$, $p(z) = \gamma \alpha z^{\alpha - 1} \leq \gamma \alpha R^{\alpha -1}$ for $z > R$ from which we deduce $(i)$. \\
If instead $\alpha \geq 1 $, we go back to the estimate \eqref{proof:estQ0}. We estimate the term on the right involving $pX$. Write $pX = pP_h X + pP_\perp X$, where we recall that $P_h X = \pi^{-1/2} \cA X $. We first note that, due to Lemma \ref{lemma:aux1} (here we use the assumption $\alpha \geq 1$), we have
\[
q_p[\cA X] = \int_1^\infty |(\cA X)'(z) + p(z)(\cA X)(z)|^2 \, dz \geq (1-c) \int_1^\infty |p(z) (\cA X)(z)|^2 \, dz,
\]
(where we can choose $c = 0$ for $\alpha = 1$ and $c = 1/2$ for $\alpha > 1$). We now note explicitly that 
\[
(h, \p_z X + p X )_{L^2(\square)} = (\cA X)' + p \cA X,
\]
and, recalling that $Z = 0$ at $s= 1/2$ and $s = 1$, 
\[
\int_{1/2}^1 TZ \, ds = \int_{1/2}^1 \bigg(\frac{1}{2M r} \p_s Z - \frac{Z}{r} \bigg)\, ds = - \int_{1/2}^1 \p_s\bigg(\frac{1}{2M r}\bigg) Z\, ds - \int_{1/2}^1\frac{Z}{r} \, ds,
\]
and since $\p_s\big(\frac{1}{2M r}\big) = -\frac{1}{r}$ we conclude that $\int_{1/2}^1 TZ \, ds = 0$.  \\
Recalling that one of the terms in the definition of $\cQ[\tilde{U}]$ is $\norma{\p_z X + p X - TZ}^2$, we conclude that 
\begin{equation} \label{eq:estQqp}
\cQ[\tilde{U}] \geq \int_C |(\p_z X + p X - TZ)|^2 \geq q_p[\cA X]\geq \frac12 \int_1^\infty |p(z) (\cA X)(z)|^2 \, dz,
\end{equation}
where in the second-to-last inequality on the right we used the Cauchy-Schwarz inequality (as $\norma{h}_{L^2(\square)} = 1$)
\[
\begin{split}
\int_\square |(\p_z X + p X - TZ)|^2 ds d\theta &\geq |( \p_z X + p X - TZ, h)_{L^2(\square)}|^2 \\
&= \left\lvert \frac{1}{\sqrt{\pi}} \int_{\square} (\p_z X + p X) ds d\theta \right\rvert^2,
\end{split}
\]
which holds for almost all $z > 1$. It remains to bound the term in $P_\perp X$. We note that
\[
\int_C p^2 |P_\perp X|^2 \leq (\sup_{z > R} p(z)^2 \rho(z)^2) \int_C |P_\perp X|^2 \rho^{-2} \leq o_R(1) \cQ_0[\tilde{U}],
\]
where in the last inequality we used \eqref{eq:transverseTETMest}. Thus,
\[
(1 - \eps) \cQ_0[\tilde{U}] \leq 2 \cQ[\tilde{U}] + C \norma{p \cA X}^2 + C \norma{p P_\perp X}^2 \leq (2 + 2C) \cQ[\tilde{U}] + C o_R(1) \cQ_0[\tilde{U}],
\]
from which we deduce the claim $(ii)$. 
\end{proof}

\begin{corollary}\label{cor:weightedTETMMaxineq}
With $N(X,Y,Z) = \norma{P_\perp(X,Y)}_{L^2(C)^2}^2 + \norma{Z}^2_{L^2(C)}$, there exists a constant $C> 0$ such that
\[ 
N(X,Y,Z) \leq
\begin{cases}
C \rho(R)^2 \big( \norma{\curl E}^2 + \norma{\Div E}^2 + \norma{E}^2 \big), \quad &\textup{if $0 < \alpha < 1$,} \\
C \rho(R)^2 \big( \norma{\curl E}^2 + \norma{\Div E}^2 \big), \quad &\textup{if $\alpha \geq 1$,} \\
\end{cases}
\]
for all $E \in \cD_N(\Omega)$ with support in $\{z > R\}$, with $\cJ(E) = U = (X,Y,Z - \kappa X) \in C^\infty(C) \cap L^2(C)^3$.
\end{corollary}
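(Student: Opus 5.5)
The plan is to combine the pointwise-in-$z$ transverse estimate \eqref{eq:transverseTETMest} with the global comparison between $\cQ_0$ and $\cQ$ in Lemma \ref{lemma:estQQ0}. For fixed $z>R$ I apply \eqref{eq:transverseTETMest} to the section of $\tilde U=(X,Y,Z)$. This is legitimate because $E\in\cD_N(\Omega)$ gives smooth $X,Y,Z$ with $Y=Z=0$ at $s\in\{1/2,1\}$, by Lemma \ref{lemma:curldivcyl} and the boundary-condition remark after \eqref{def:rescaling}. It yields
\[
\norma{P_\perp(X,Y)(\cdot,z)}_\square^2+\norma{Z(\cdot,z)}_\square^2\le \frac{\rho(z)^2}{c_0}\,Q_\square[X,Y,Z](z).
\]
Since $\rho$ is decreasing and $\supp E\subset\{z>R\}$, I bound $\rho(z)^2\le\rho(R)^2$. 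Integrating in $z$ then gives
\[
N(X,Y,Z)\le c_0^{-1}\rho(R)^2\int_R^\infty Q_\square[X,Y,Z](z)\,dz
\]
(taking $R\ge z_0$; for smaller $R$ the claim is absorbed by enlarging $C$ on the compact range, or simply stated for $R$ large).

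Next I identify $\int Q_\square\,dz$ as a sub-sum of $\cQ_0$. By definition,
\[
Q_\square=\norma{K}_\square^2+\norma{L}_\square^2+\norma{TZ}_\square^2+\norma{GZ}_\square^2 .
\]
Each of these terms appears as a separate nonnegative summand in the exact identity of Lemma \ref{lemma:aux2}. Hence
\[
\int_R^\infty Q_\square\,dz\le \cQ_0[X,Y,Z].
\]
Lemma \ref{lemma:estQQ0} then closes the argument. For $0<\alpha<1$ I use $\cQ_0\le C_1(\norma{\curl E}^2+\norma{\Div E}^2+\norma{E}^2)$, and for $\alpha\ge1$ I use $\cQ_0\le C_2(\norma{\curl E}^2+\norma{\Div E}^2)$. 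Setting $C=c_0^{-1}\max(C_1,C_2)$ gives both cases.

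The only delicate point is the uniformity of the constants. The constant $c_0$ in \eqref{eq:transverseTETMest} is independent of $z\ge z_0$, and $C_1,C_2$ are independent of $E$ and $R$ once $R$ exceeds the threshold used in the absorption step of Lemma \ref{lemma:estQQ0}. So I would state the corollary for $R\ge R_0:=\max(z_0,\text{that threshold})$. I would also check that Lemma \ref{lemma:aux2} applies, since it needs the boundary conditions $Y=Z=0$, which hold here. No further obstacle is expected: the corollary is essentially a chaining of the three inequalities.
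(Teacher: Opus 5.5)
Your proposal is correct and follows the paper's proof. You integrate \eqref{eq:transverseTETMest} in $z$, use that $\rho$ is decreasing on the support to extract $\rho(R)^2$, bound $\int Q_\square\,dz$ by $\cQ_0$, and conclude with Lemma \ref{lemma:estQQ0}; you also rightly make explicit that the bound by $\cQ_0$ uses the exact identity of Lemma \ref{lemma:aux2}, which the paper leaves implicit. For small $R$, take your second option and state the result for $R\ge R_0$, as the paper implicitly does, because ``enlarging $C$ on a compact range'' does not work when $\alpha\ge1$: there is no $\norma{E}^2$ term to absorb into, and the $z$-support of $E$ is not confined to a fixed compact set.
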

\begin{proof}
Integrating inequality \eqref{eq:transverseTETMest} in $z > 1$ we obtain
\begin{multline*}
\int_1^\infty \big( \norma{K}^2_{\square} + \norma{L}^2_{\square} + \norma{TZ}^2_{\square} + \norma{GZ}^2_{\square} \big) dz \\
\geq c_0 \int_1^\infty \frac{1}{\rho(z)^2} \big(\norma{P_\perp(X,Y)}_{L^2(\square)^2}^2 + \norma{Z}^2_{L^2(\square)} \big)\, dz
\end{multline*}
that rearranges to give $N(X,Y,Z) \leq c_0^{-1} \rho(R)^2 \cQ_0(X,Y,Z) \leq C \rho(R)^2 \cQ(X,Y,Z)$, where in the last inequality we used Lemma \ref{lemma:estQQ0}.
\end{proof}

\begin{theorem}[Critical case]\label{thm:critical}
Let $\alpha = 1$. There exists a constant $C_\gamma > 0$ such that
\[
\norma{E}^2_{L^2(\Omega)^3} (1 - o_R(1))(\gamma^2 - C_\gamma \e^{-2R}) \leq \norma{\curl E}^2_{L^2(\Omega)^3} + \norma{\Div E}^2_{L^2(\Omega)},
\]
for all $E \in H_0(\curl, \Omega) \cap H(\Div, \Omega)$ with support in $\{ (\bar{x}, z ) \in \Omega : z \geq R \}$.
\end{theorem}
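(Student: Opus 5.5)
By Theorem \ref{thm:coreprop} (and a cutoff argument as in Step 2 of the proof of Theorem \ref{thm:MaxGaff}), it suffices to prove the inequality for $E \in \cD_N(\Omega)$ with support in $\{z > R\}$. Pass to $U = \cJ(E) = (X,Y,W)$ with $W = Z - \kappa X$. Then $\norma{E}^2 = \norma{X}^2 + \norma{Y}^2 + \norma{W}^2$ and $\norma{\curl E}^2 + \norma{\Div E}^2 = \cQ[X,Y,Z]$. For $\alpha = 1$ we have $p \equiv \gamma$, so $V = p^2 - p' = \gamma^2$. The idea is that the TEM component $P_h X$ carries the mass $\gamma^2$ through $q_p$, while everything else is either small in $L^2$ or controlled with a weight $\rho(R)^2 = \e^{-2R}$.

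\textbf{Step 1: split the mass.} Write
\[
\norma{E}^2 = \norma{\cA X}^2_{L^2(R,\infty)} + \norma{P_\perp(X,Y)}^2 + \norma{W}^2 .
\]
Since $\norma{W}^2 \le 2\norma{Z}^2 + 2\sup|\kappa|^2\norma{X}^2$, and $\sup_{z>R,s}|\kappa| = o_R(1)$ by \eqref{eq:decayparameters}, we get
\[
\norma{E}^2 \le (1+o_R(1))\norma{\cA X}^2 + C\,N(X,Y,Z).
\]
Here the $o_R(1)\norma{X}^2 \le o_R(1)\norma{E}^2$ contribution is absorbed on the left, producing the factor $(1-o_R(1))$. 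By Corollary \ref{cor:weightedTETMMaxineq} in the case $\alpha \geq 1$,
\[
N(X,Y,Z) \le C\e^{-2R}\,\cQ[X,Y,Z].
\]

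\textbf{Step 2: the TEM part.} Reuse the chain \eqref{eq:estQqp} from the proof of Lemma \ref{lemma:estQQ0}. The term $\norma{\p_z X + pX - TZ}^2$ in $\cQ$, projected onto $h$ with $\int_{1/2}^1 TZ\,ds = 0$, gives $\cQ \ge q_p[\cA X]$. With $p \equiv \gamma$ constant, Lemma \ref{lemma:aux1} applies with $c = 0$. More directly, since $\cA X$ vanishes near $z = R$ and at infinity, integration by parts kills the cross term:
\[
q_p[\cA X] = \norma{(\cA X)'}^2 + \gamma^2\norma{\cA X}^2 \ge \gamma^2\norma{\cA X}^2 .
\]
Hence $\gamma^2\norma{\cA X}^2 \le \cQ$.

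\textbf{Step 3: combine.} From Steps 1 and 2,
\[
\gamma^2(1-o_R(1))\norma{E}^2 \le \gamma^2\norma{\cA X}^2 + C\gamma^2\e^{-2R}\cQ \le (1 + C\gamma^2\e^{-2R})\,\cQ .
\]
Dividing by $1 + C\gamma^2\e^{-2R}$ and expanding gives
\[
(1-o_R(1))(\gamma^2 - C_\gamma\e^{-2R})\norma{E}^2 \le \cQ ,
\]
which is the claimed inequality.

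\textbf{Main obstacle.} The delicate point is that the $q_p$ lower bound uses only one of the four squares in $\cQ$, while Corollary \ref{cor:weightedTETMMaxineq} uses all of $\cQ$. These must not be double counted at the sharp constant. The plan above avoids this by bounding $\cQ$ twice and paying only a multiplicative $(1 + C\e^{-2R})$. One also has to check that the $o_R(1)$ absorption in Lemma \ref{lemma:estQQ0}(ii) is uniform, so that the constant $C$ in the Corollary does not depend on $R$. If it degenerates, I would instead redo the absorption using $\sup_{z>R} \rho(z)^2 p(z)^2 = \gamma^2\e^{-2R}$ explicitly.
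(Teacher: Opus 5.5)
Your proposal is correct and follows essentially the same route as the paper's proof. You split $\norma{E}^2$ into the TEM mass $\norma{\cA X}^2 \le \gamma^{-2} q_p[\cA X] \le \gamma^{-2}\cQ$, the transverse part $N(X,Y,Z) \le C\e^{-2R}\cQ$ from Corollary~\ref{cor:weightedTETMMaxineq}, and an $o_R(1)\norma{E}^2$ error from $\kappa$; you then invert the factor $\gamma^{-2}+C\e^{-2R}$ and extend to $X_N(\Omega)$ by density via Theorem~\ref{thm:coreprop}, exactly as the paper does (your explicit bound $\norma{W}^2 \le 2\norma{Z}^2 + 2\sup|\kappa|^2\norma{X}^2$ is if anything slightly more careful than the paper's, and only affects $C_\gamma$).
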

\begin{proof}
Assume that $E \in \cD_N(\Omega)$ with support in $\{ (\bar{x}, z ) \in \Omega : z \geq R \}$. First note that since $\alpha = 1$, $p(z) = \gamma$. Let $\cJ(E) = U = (X,Y, W) = (X, Y, Z - \kappa X)$ be the representation of $E$ in renormalized rescaled cylindrical coordinates. Lemma \ref{lemma:aux1} implies that $q_p[\cA X] \geq \gamma^2 \norma{\cA X}_{L^2(1,\infty)}$, and Corollary \ref{cor:weightedTETMMaxineq} gives
$$N(X,Y,Z) \leq C \rho(R)^2 \cQ(X,Y,Z) = C \e^{-2R} \big( \norma{\curl E}^2_{L^2(\Omega)^3} + \norma{\Div E}^2_{L^2(\Omega)} \big).$$
Recall that by the computations in \eqref{eq:decayparameters}, $\sup_{z > R} \sup_{s \in (1/2,1)} |\kappa(s,z)| \to 0$ as $R \to + \infty$; we then deduce that
\[
\begin{split}
\norma{E}^2_{L^2(\Omega)^3} &= \norma{U}^2_{L^2(C)^3} \leq \norma{\cA X}_{L^2(C)}^2 + \norma{P_\perp(X,Y)}_{L^2(C)^2} ^2 + \norma{W}_{L^2(C)}^2 \\
&\leq \frac{q_p(\cA X)}{\gamma^2} + N(X,Y,Z) + (\sup_{z > R} \sup_{s \in (1/2,1)} |\kappa(s,z)|^2) \norma{X}^2_{L^2(C)} \\
&\leq \bigg(\frac{1}{\gamma^2} + C \e^{-2R} \bigg) \cQ(X,Y,Z) + o_R(1) \norma{E}^2_{L^2(\Omega)^3}.
\end{split}
\]
Thus,
\[
 \cQ(X,Y,Z) \geq (1 -o_R(1)) \bigg(\frac{1}{\gamma^2} + C \e^{-2R} \bigg)^{-1} \hspace{-2mm}\norma{E}^2_{L^2(\Omega)^3} 
   = \frac{\gamma^2(1 -o_R(1))}{1 + C\gamma^2 \e^{-2R}} \norma{E}^2_{L^2(\Omega)^3},
\]
and this concludes the proof in the case $E \in \cD_N(\Omega)$. \\
If $E \in X_N(\Omega)$ with support in $\{ (\bar{x}, z ) \in \Omega : z \geq R \}$, by Theorem \ref{thm:coreprop} there exists a sequence $(E_n)_n \subset \cD_N(\Omega)$, $E_n \to E$ in $X_N(\Omega)$. We may assume without loss of generality that $\supp E_n \subset \{ (\bar{x}, z ) \in \Omega : z \geq R - \eps_n \}$ for a sequence $(\eps_n)_n$, $\eps_n \to 0$ as $n \to + \infty$. The previous part of the proof implies that 
\[
\norma{E_n}^2_{L^2(\Omega)^3} (1 - o_{R}(1))(\gamma^2 - C_\gamma \e^{-2R}) \leq \norma{\curl E_n}^2_{L^2(\Omega)^3} + \norma{\Div E_n}^2_{L^2(\Omega)},
\]
with a constant $C_\gamma$ independent of $n$. Passing to the limit as $n \to +\infty$ concludes the proof. 
\end{proof}

\begin{theorem}[Supercritical case]\label{thm:supercritical}
If $\alpha > 1$, there exists a constant $\Lambda(R) \to + \infty$ as $R \to + \infty$ such that
\[
\norma{E}^2_{L^2(\Omega)^3} \Lambda(R) \leq \norma{\curl E}^2_{L^2(\Omega)^3} + \norma{\Div E}^2_{L^2(\Omega)},
\]
for all $E \in H_0(\curl, \Omega) \cap H(\Div, \Omega)$ with support in $\{ (\bar{x}, z ) \in \Omega : z \geq R \}$. Moreover, $\Lambda(R) = O(R^{2(\alpha -1)})$ as $R \to + \infty$.
\end{theorem}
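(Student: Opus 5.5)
We follow the proof of Theorem \ref{thm:critical}, replacing the constant $\gamma^2$ by the growing lower bound on $p(z)^2$ that is available when $\alpha>1$. First take $E \in \cD_N(\Omega)$ supported in $\{z \geq R\}$ and write $\cJ(E) = U = (X,Y,Z-\kappa X)$. Split the norm as in the critical case:
\[
\norma{E}^2 = \norma{U}^2 \leq \norma{\cA X}^2_{L^2(1,\infty)} + \norma{P_\perp(X,Y)}^2 + \norma{W}^2 .
\]
Then treat the three pieces separately.

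\textbf{The TEM piece.} Here $p(z) = \gamma\alpha z^{\alpha-1}$, so $p'(z) = \gamma\alpha(\alpha-1)z^{\alpha-2}$. The ratio $p'/p^2 = (\alpha-1)/(\gamma\alpha z^{\alpha})$ tends to $0$. Hence the hypothesis \eqref{eq:pprime-kappa} of Lemma \ref{lemma:aux1} holds on $[R,\infty)$ with $c=1/2$ once $R$ is large. The function $\cA X$ vanishes on $(1,R)$, so Lemma \ref{lemma:aux1} gives
\[
\tfrac12\, p(R)^2 \norma{\cA X}^2 \leq \tfrac12 \int_R^\infty p^2 |\cA X|^2 \leq q_p[\cA X],
\]
where we used that $p$ is increasing. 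By \eqref{eq:estQqp} we also have $q_p[\cA X] \leq \cQ(X,Y,Z) = \norma{\curl E}^2 + \norma{\Div E}^2$. Therefore
\[
\norma{\cA X}^2 \leq 2(\gamma\alpha)^{-2} R^{-2(\alpha-1)} \big(\norma{\curl E}^2 + \norma{\Div E}^2\big).
\]

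\textbf{The transverse TE/TM pieces.} Corollary \ref{cor:weightedTETMMaxineq} in the case $\alpha \geq 1$ gives
\[
N(X,Y,Z) \leq C\e^{-2R}\big(\norma{\curl E}^2 + \norma{\Div E}^2\big).
\]

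\textbf{The remaining piece.} Write $W = Z - \kappa X$, so that
\[
\norma{W}^2 \leq 2\norma{Z}^2 + 2\sup_{z>R,\,s}|\kappa|^2 \norma{X}^2 .
\]
We have $\norma{Z}^2 \leq N$, and $\norma{X}^2 \leq \norma{U}^2 = \norma{E}^2$. The factor $\sup|\kappa|^2$ is $o_R(1)$ by \eqref{eq:decayparameters}. Summing the three estimates,
\[
(1-o_R(1))\norma{E}^2 \leq \big(C R^{-2(\alpha-1)} + C\e^{-2R}\big)\big(\norma{\curl E}^2 + \norma{\Div E}^2\big).
\]
We can therefore take $\Lambda(R) = (1-o_R(1))\big(CR^{-2(\alpha-1)} + C\e^{-2R}\big)^{-1}$, which is of order $R^{2(\alpha-1)}$ and tends to $+\infty$. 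This is the claimed rate: the TE/TM sectors contribute the much larger scale $\e^{2R}$, so the TEM sector is the bottleneck.

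\textbf{Passage to general $E$.} For $E \in X_N(\Omega)$ supported in $\{z \geq R\}$, use the density Theorem \ref{thm:coreprop} exactly as at the end of the proof of Theorem \ref{thm:critical}. Choose approximants $E_n \in \cD_N(\Omega)$ supported in $\{z \geq R-\eps_n\}$. The estimate holds for each $E_n$ with $\Lambda(R-\eps_n)$, and $\Lambda$ depends continuously on $R$ up to the $o_R(1)$ term, so letting $n \to \infty$ gives the claim for $E$.

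\textbf{Main obstacle.} The step I expect to need most care is checking that Lemma \ref{lemma:estQQ0}(ii), and hence Corollary \ref{cor:weightedTETMMaxineq}, applies with constants uniform in $R$ for $\alpha > 1$. Its proof uses the bound $\sup_{z>R}\rho(z)^2 p(z)^2 \to 0$ from \eqref{eq:decayparameters3}, which holds since $\e^{-2z}z^{2(\alpha-1)}\to 0$. It also uses the growth factor $1 + (2\gamma\alpha z^{\alpha-1})^2$ in Lemma \ref{lemma:aux3}. That factor is polynomial in $z$ while it is multiplied by $\rho(z)^2 = \e^{-2z}$, so it should still be $o_R(1)$. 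With these two checks the error terms are absorbed as in the proof of Lemma \ref{lemma:estQQ0}.
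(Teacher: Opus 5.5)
Your proposal is correct and follows the paper's proof essentially step for step: Lemma \ref{lemma:aux1} with $c=1/2$ combined with \eqref{eq:estQqp} for the TEM component, Corollary \ref{cor:weightedTETMMaxineq} for $N(X,Y,Z)$, the smallness of $\kappa$ for the $W$-component, and the density argument via Theorem \ref{thm:coreprop}. Your bound $\norma{W}^2 \le 2\norma{Z}^2 + 2\sup|\kappa|^2\norma{X}^2$ is slightly more careful than the paper's version, and your checks of \eqref{eq:decayparameters3} and of the factor $\rho^2(1+(2\gamma\alpha z^{\alpha-1})^2)$ are exactly what Lemma \ref{lemma:estQQ0}(ii) needs when $\alpha>1$.
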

\begin{proof}
Note that $p(z) = \gamma \alpha z^{\alpha -1}$, $z > 1$. Let $E \in \cD_N(\Omega)$ with support in  $\{ (\bar{x}, z ) \in \Omega : z \geq R \}$, and let $U = (X,Y, W) = (X, Y, Z - \kappa X)$ be the representation of $E$ in renormalized rescaled cylindrical coordinates. Lemma \ref{lemma:aux1} implies that
\[
q_p[\cA X] \geq \frac12 \int_R^\infty p(z)^2 (\cA X(z))^2 \, dz \geq c R^{2 \alpha -2} \norma{\cA X}^2_{L^2(R, \infty)},
\]
hence $\norma{\cA X}^2_{L^2(R, \infty)} \leq \frac{1}{c R^{2 \alpha -2}} \cQ(X,Y,Z)$, in view of \eqref{eq:estQqp}. As in the critical case $\alpha =1 $ we have the estimate
\[
(1 -o_R(1)) \norma{E}^2_{L^2(\Omega)^3} \leq \norma{\cA X}_{L^2(C)}^2 + N(X,Y,Z),
\]
and the previous inequalities and Corollary \ref{cor:weightedTETMMaxineq} now imply
\[
(1 -o_R(1)) \norma{E}^2_{L^2(\Omega)^3} \leq \bigg(\frac{1}{c R^{2 \alpha -2}} + C \e^{-2R}\bigg) \cQ(X,Y,Z).
\]
This is exactly the claimed inequality for $E \in \cD_N(\Omega)$. The extension of the inequality to a generic $E \in H_0(\curl, \Omega) \cap H(\Div, \Omega)$ as in the statement follows as in the proof of the previous theorem.
\end{proof}

\subsection{Step 6: conclusion of the proof}
\begin{proof}[Proof of Theorem \ref{thm:sigmaessperforatedhornalpha}]
We have already showed that $\sigma_e(\cM) = [0, + \infty)$ if $ 0 < \alpha < 1$ and that $\sigma_e(\cM) \supset [\gamma^2, + \infty)$ if $\alpha = 1$, see Theorem \ref{thm:Weylmodestransplant} and Example \ref{ex:superexpM}.\\
We first prove that $\sigma_e(\cM) \subset [\gamma^2, + \infty)$. Assume, for a contradiction, that there exists $\la < \gamma^2$, $\la \in \sigma_e(\cM)$. By definition, there exists a Weyl singular sequence $(u_n)_n \subset \dom(\cM)$, $\norma{u_n}=1$, $u_n \rightharpoonup 0$ and $(\cM - \la) u_n\to 0$ in $L^2(\Omega)^3$. In particular,
\[
((\cM - \la) u_n, u_n) = \norma{\curl u_n}^2 - \la \to 0,
\]
as $n \to + \infty$, implying that $(u_n)_n$ is a uniformly bounded sequence in $X_{N0}(\Omega)$. We first note that the sequence $(u_n)_n$ has the following property:
\begin{equation}\label{proof:CP}
\textup{$u_n \to 0$ as $n \to  +\infty$ in $L^2(K)^3$, for all precompact $K\subseteq \Omega$ with $K \subset \ov{K} \subseteq \ov{\Omega}$.}
\end{equation}
This is a direct consequence of the compact embedding of $X_N(K)$ in $L^2(K)^3$ and of $u_n \rightharpoonup 0$ as $n \to + \infty$.\\
Fix now $R > R_0$, for $R_0 > 1$ sufficiently large. Choose $\chi_R \in C^\infty(\R)$ with $0 \leq \chi_R \leq 1$, $\chi_R = 0$ in $(-\infty, R)$, $\chi_R = 1$ in $(R+1, +\infty)$ and $\norma{\chi'_R}_\infty \leq C$, for some constant $C > 0$ independent of $R$. Define
\[
u_{n,R} = \chi_R u_n, \quad n \in \N.
\]
Then $u_{n,R} \in X_{N0}(\Omega)$, and it has support contained in $\Omega \cap \{z > R\}$. We further have:\\
(i) $\norma{u_{n,R}} = \norma{u_n}_{\Omega \cap \{z > R\}} \to 1$, as $n \to + \infty$, by \eqref{proof:CP}. \\
(ii) $\norma{\Div u_{n,R}}^2 = \norma{\nabla \chi_R \cdot u_{n,R}}^2 \to 0$ as $n \to + \infty$, since $\nabla \chi_R$ has compact support and therefore \eqref{proof:CP} applies. \\
(iii) $\norma{\curl u_{n,R}}^2 = \norma{\chi_R \curl u_n }^2 + \norma{\nabla \chi_R \times u_n}^2 + 2 \re (\chi_R \curl u_n, \nabla \chi_R \times u_n)$, and the last two summands tend to zero as $n \to + \infty$ again by the compactness of the support of $\nabla \chi_R$ and property \eqref{proof:CP}. \\
Property (iii) immediately implies that
\[
\liminf_{n \to + \infty} \norma{\curl u_{n,R}}^2 = \liminf_{n \to + \infty} \norma{\chi_R \curl u_n }^2 \leq \la.
\]
Theorem \ref{thm:critical} applied to $u_{n,R}$ implies that
\[
\norma{u_{n,R}}^2_{L^2(\Omega)^3} (1 - o_R(1))(\gamma^2 - C_\gamma \e^{-2R}) \leq \norma{\curl u_{n,R}}^2_{L^2(\Omega)^3} + \norma{\Div u_{n,R}}^2_{L^2(\Omega)},
\]
for all $n \in \N$, $R > R_0$. Taking the $\liminf$ as $n \to + \infty$ on both hand-sides, with the help of $(i)-(iii)$, we conclude that
\[
(1 - o_R(1))(\gamma^2 - C_\gamma \e^{-2R}) \leq \la.
\]
Finally, take the $\lim$ as $R \to + \infty$ to conclude that $\gamma^2 \leq \la$, a contradiction. This proves that $\sigma_e(\cM) =[\gamma^2, +\infty)$ in the critical case $\alpha =1$. \\[0.1cm]
We now consider the supercritical case $\alpha \geq 1$. Assume for a contradiction that there exists $\la \in [0, +\infty)$, $\la \in \sigma_e(\cM)$. If $(u_n)_n$ is the Weyl sequence at the frequency $\la$, arguing as in the previous part of the proof, the sequence $(u_{n,R})_n \subset X_{N0}(\Omega)$ has support in $\Omega \cap \{z > R\}$ and Theorem \ref{thm:supercritical} implies that
\[
\frac{\norma{\curl u_{n,R}}^2 + \norma{\Div u_{n,R}}^2}{\norma{u_{n,R}}^2} \geq \Lambda(R) (1- o_{R}(1)),
\]
taking the $\liminf_{n \to + \infty}$ we immediately find that $\la \geq \Lambda(R)(1 -o(R))$ and the right-hand side can be made arbitrarily large by increasing $R$. This contradiction concludes the proof. 

%
\end{proof}

\begin{rem} \label{rem:generality}
Even though Theorem \ref{thm:sigmaessperforatedhornalpha} has been stated and proved only for the specific choices $\delta(z) = \e^{-z - \e^{2 \gamma z^\alpha}}$ and $\rho(z) = \e^{-z}$, the same proof applies to the case of the perforated horn $\Omega_{\delta \rho} = \{(\bar{x}, z) \in \R^3 : \delta(z) < |\bar{x}| < \rho(z), z > 1 \}$, provided that \eqref{eq:decayparameters}--\eqref{eq:decayparameters3} hold. Obviously, in this more general case, the claim of the theorem becomes $\sigma_e(\cM) = \sigma_e(H_V)$, as in the statement of Theorem \ref{thm:intro:generalperfhorn}, and one has to define 
\[
\hat{\delta}(z) = \frac{\delta'(z)}{\delta(z)}, \quad \hat{\rho}(z)= \frac{\rho'(z)}{\rho(z)}, \quad M(z) = \log \frac{\rho(z)}{\delta(z)},
\quad p(z) = \frac{M'(z)}{2M(z)}, \]
\[ \beta(s,z) = 2(1-s) \hd(z) + (2s -1) \hr(z), \;\;\; B(s,z) = \frac{\beta(s,z)}{2M(z)}, \] 
and 
\[
\kappa(s,z) = \beta(s,z) r(s,z),  \quad \omega(s,z) = (D_B \kappa)(s,z) = \p_z \kappa(s,z) - B(s,z) \p_s \kappa(s,z).
\]

\end{rem}

\subsection*{Acknowledgements}
The authors would like to thank D. Buoso, L. Provenzano, A. Savo, and P. Siegl for useful discussions. They would like to further thank D. Krej{\v c}i{\v r}{\'i}k for providing references and insights regarding the essential spectrum of the Dirichlet Laplacian in unbounded domains, and Bruno Colbois for pointing out relevant references regarding the Hodge Laplacian on differential forms in Riemannian manifolds. This work was partially supported by a grant from the Simons Foundation to the second author. Both authors would also like to thank the Isaac Newton Institute  for Mathematical Sciences, Cambridge, for support and hospitality during the programmes `Geometric Spectral Theory and Applications' and (second author) `Operator Methods for Dynamical Systems', where some of the work on this paper was undertaken. This work was supported by EPSRC grant EP/Z000580/1. The first author is a member of INdAM (Istituto Nazionale di Alta Matematica) and GNAMPA (Gruppo Nazionale di Analisi Matematica, Probabilità e loro Applicazioni); he acknowledges the support by the project GNAMPA PROGETTI DI RICERCA 2026, CUP E53C25002010001.

\subsection*{AI usage disclosure}
The authors used ChatGPT 5.6 Sol to check the computations in Sections 5-6-7 and in the Appendix. All AI-assisted verifications were independently checked by the authors, who take full responsibility for the contents of the paper.

\bibliographystyle{abbrv}
\bibliography{EssentialMaxbib}

\section{Appendix}

\begin{proof}[Proof of Lemma \ref{lemma:curldivcyl}]
Recall that the exterior derivative $d$ commutes with diffeomorphic pullbacks; hence $d u = c_{s\theta} u\, ds \wedge d\theta + c_{\theta z} u\, d\theta \wedge dz + c_{zs} u\, dz \wedge ds$. Thus, in order to compute $\norma{\curl E}^2$ in the new coordinates it is enough to compute 
\[
|d u|^2_g = |c_{s\theta} u|^2 |ds \wedge d\theta|^2 + |c_{zs} u|^2 |dz \wedge d\theta|^2 + |c_{\theta z} u|^2 |d\theta \wedge dz|^2 \]
\[ \hspace{4cm} + 2 \re (c_{s \theta} u \overline{c_{\theta z}u}) \langle ds \wedge d\theta, d \theta \wedge dz \rangle
\]
To compute the inner product between two 2-forms $\omega_1 = \sum_{ij} \omega_1^{ij}\, d x_i \wedge dx_j$, $\omega_2 = \sum_{ij}\omega_2^{ij}\, d x_i \wedge dx_j$, recall the formula
\[
\langle \omega_1, \omega_2 \rangle_g = \sum \omega_1^{i_1 k_1} \overline{\omega_2^{i_2 k_2}} g^{i_1 k_1} g^{i_2 k_2}.
\]
Thus, 
\begin{align*}
&\langle ds \wedge d \theta, ds \wedge d \theta \rangle_g = g^{ss} g^{\theta \theta} = \frac{1}{r^2} \bigg( \frac{1}{(2 M)^2 r^2} + \frac{\beta^2}{(2M)^2} \bigg), \\
&\langle d \theta \wedge d z, d \theta \wedge d z \rangle_g = g^{\theta \theta} g^{z z} = \frac{1}{r^2}, \\
&\langle dz \wedge ds, dz \wedge ds \rangle_g = g^{zz} g^{ss} - (g^{zs})^2 = \frac{1}{(2M)^2 r^2}, \\
&\langle ds \wedge d\theta, d\theta \wedge dz \rangle_g = - g^{zs}g^{\theta\theta} = \frac{\beta}{2M r^2}, \\
&\langle ds \wedge d \theta, ds \wedge dz \rangle_g = \langle d\theta \wedge dz , dz \wedge ds \rangle_g = 0.
\end{align*}
We conclude that
\[
\begin{split}
|du|_g^2 \sqrt{g} &= \bigg( \frac{1}{(2 M)^2 r^2} + \frac{\beta^2}{(2M)^2} \bigg) |c_{s\theta}|^2 + (2M) |c_{\theta z}|^2 + \frac{|c_{zs}|^2}{2M} + 2 \re(\beta c_{s\theta} \ov{c_{\theta z}}) \\
&= 2M \bigg| c_{\theta z} + \frac{\beta}{2M} c_{s\theta} \bigg|^2 + \frac{|c_{zs}|^2}{2M} + \frac{|c_{s\theta}|^2}{2Mr},
\end{split}
\]
hence,
\[
\norma{\curl E}^2_\Omega = \int_C \bigg(2M \bigg| c_{\theta z} + \frac{\beta}{2M} c_{s\theta} \bigg|^2 + \frac{|c_{zs}|^2}{2M} + \frac{|c_{s\theta}|^2}{2Mr} \bigg)\, ds d\theta dz.
\]

For the divergence, we have $\Div_g u = \frac{1}{\sqrt{g}} \sum_i \p_i ( \sqrt{g} g^{ij} u_j)$. Since
\begin{align*}
&\sqrt{g} ( g^{ss} u_s + g^{sz} u_z) = \frac{u_s}{2M} - \beta r^2 \bigg( u_z - \frac{\beta}{2M} u_s \bigg), \\
&\sqrt{g} g^{\theta\theta}u_\theta = 2M u_\theta, \\
&\sqrt{g} \big( g^{zs} u_s + g^{zz} u_z \big) = 2M r^2 \bigg(u_z - \frac{\beta}{2M} u_s \bigg), 
\end{align*}
we deduce that
\[
\begin{split}
\Theta(u) &= \sum_i \p_i ( \sqrt{g} g^{ij} u_j) = \frac{1}{2M r^2} \bigg[ \p_s \bigg(\frac{u_s}{2M} - \beta r^2 \bigg( u_z - \frac{\beta}{2M} u_s \bigg) \bigg)\hspace{2cm}\\
&\hspace{4.5cm}+ \p_\theta (2M u_\theta) + \p_z \bigg( 2M r^2 \bigg(u_z - \frac{\beta}{2M} u_s \bigg)   \bigg) \bigg],
\end{split}
\]
from which we conclude that
\[
\norma{\Div E}^2_{L^2(\Omega)} = \int_C \frac{|\Theta(u)|^2}{g} \sqrt{g} \, ds d\theta dz = \int_C \frac{|\Theta(u)|^2}{2M r^2} \, ds d\theta dz.
\]

\end{proof}

\begin{proof}[Proof of Lemma \ref{lemma:aux1}]
Choose real cutoffs $\chi_M\in C_c^\infty((z_0,\infty))$, $M>R+1$, such that
\[
 0\le\chi_M\le1,\qquad
 \chi_M=1\ \text{on }[R,M],\qquad
 \chi_M=0\ \text{on }[M+1,\infty),\qquad
 \norma{\chi_M'}_\infty\le C,
\]
where the constant $C$ is independent of $M$.  Set $a_M=\chi_Ma$.  Note that
\[
 D_pa_M=\chi_MD_pa+\chi_M'a.
\]
Moreover,
\begin{align*}
 \norma{a_M-a}_{L^2}
 &\to0,\\
 \norma{D_p(a_M-a)}_{L^2}
 &\le
 \norma{(1-\chi_M)D_pa}_{L^2}
 +\norma{\chi_M'a}_{L^2}
 \to0.
\end{align*}
Here $D_pa=0$ almost everywhere on $(1,R)$, since the support of $a$ is in $\{z > R\}$. Consequently
\begin{equation}
 q_p[a_M]\to q_p[a],
 \label{eq:graph-convergence-aN}
\end{equation}
as $M \to + \infty$.

For fixed $M$, the function $a_M$ has bounded support.  It belongs to $H^1$ on that bounded interval because $p$ is locally bounded and $a_M'=D_pa_M-pa_M\in L^2$.  Hence $p|a_M|^2\in W^{1,1}$ there and has compact support.  By integration by parts, it follows that
\begin{align}
 q_p[a_M]
 &=\int_{1}^\infty
 \left(|a_M'|^2+p^2|a_M|^2
       +p\,(|a_M|^2)'\right)\rd z \notag\\
 &=\int_{1}^\infty
 \left(|a_M'|^2+(p^2-p')|a_M|^2\right)\rd z \notag\\
 &\ge(1-c)
 \int_R^\infty p^2\chi_M^2|a|^2\rd z.
 \label{eq:compact-square-aN}
\end{align}
Fatou's lemma, \eqref{eq:graph-convergence-aN}, and the pointwise convergence $\chi_M\to1$ yield
\[
 (1-c)\int_R^\infty p^2|a|^2\rd z
 \le
 \liminf_{M\to\infty}q_p[a_M]
 =q_p[a].
\]
This both proves \eqref{eq:tail-square-cutoff} and shows that $pa\in L^2(R,\infty)$.
\end{proof}

\vspace{1cm}

\begin{proof}[Proof of Lemma \ref{lemma:aux2}]
Expand the last three squares in the definition of $\cQ_0$ \eqref{def:Q}:
\begin{align}
 \norma{HX-TZ}^2
 &=\norma{HX}^2+\norma{TZ}^2
   -2\operatorname{Re}\ip{HX}{TZ},                                \label{eq:expand1}\\
 \norma{-HY+GZ}^2
 &=\norma{HY}^2+\norma{GZ}^2
   -2\operatorname{Re}\ip{HY}{GZ},                                \label{eq:expand2}\\
 \norma{L(X,Y)+HZ}^2
 &=\norma{L(X,Y)}^2+\norma{HZ}^2
   +2\operatorname{Re}\ip{L(X,Y)}{HZ}.                                  \label{eq:expand3}
\end{align}
Thus, in order to prove Lemma \ref{lemma:aux2}, it is sufficient to establish the identity
\begin{equation}
 \ip {L(X,Y)}{HZ}=\ip{HX}{TZ}+\ip{HY}{GZ}.
 \label{eq:crossidentity}
\end{equation}
We prove \eqref{eq:crossidentity} in three steps.\\
\textbf{Step 1.} We claim that the following integration-by-parts formula holds: for each fixed $z > 1$, 
\begin{equation}
 \ip{L(X,Y)}\phi_{\square}
 =-\ip X{T\phi}_{\square}-\ip Y{G\phi}_{\square}
 +\int_{\mathbb S^1}
   \left[\frac{X\overline \phi}{2M r}\right]_{1/2}^{1}\dd\theta.
 \label{eq:Lparts}
\end{equation}
for arbitrary smooth $X,Y,\phi$, periodic in $\theta$. \\
Indeed, integrating $L$ against $\phi$ gives
\begin{align}
 \ip{L(X,Y)}\phi_{\square}
 &=\int\left(\frac1{2M r}\p_s X+\frac1r \p_\theta Y\right)
          \overline \phi\,\dd s\dd\theta                             \notag\\
 &=\int_{\mathbb S^1}
   \left[\frac{X\overline \phi}{2M r}\right]_{1/2}^{1}\dd\theta
 -\int X\,\partial_s\left(\frac{\overline \phi}{2M r}\right)
        \dd s\dd\theta
 -\int\frac1rY\,\overline{\p_\theta \phi}\,\dd s\dd\theta.
 \label{eq:Lparts1}
\end{align}
Since
\[
 \partial_s\left(\frac1{2M r}\right)
 =-\frac1r,
\]
we have
\[
 \partial_s\left(\frac{\overline \phi}{2M r}\right)
 =\frac1{2M r}(\overline{\p_s \phi}-2M\overline \phi)
 =\overline{T\phi}.
\]
Since $r$ is independent of $\theta$, 
\[
 \frac1r\overline{\p_\theta \phi}=\overline{G\phi}.
\]
Thus the claim has been proved. Note that if $\phi=Z$ and $Z=0$ on the two faces, the boundary term vanishes:
\begin{equation}
 \ip{L(X,Y)}Z_{\square}=-\ip X{TZ}_{\square}-\ip Y{GZ}_{\square}.
 \label{eq:LpartsZ}
\end{equation}

\noindent \textbf{Step 2.} We now claim that 
\[
 T(HZ)=H(TZ),\qquad G(HZ)=H(GZ).
\]
To prove this, we first note that since $D_Br=0$,
\[
 D_B\left(\frac1r\right)=0.
\]
Moreover
\[
 [D_B,\partial_s]f
 =D_B(\p_s f)-\partial_s(D_Bf)=\p_s B \p_sf = 2p \p_s f.
\]
Since $D_B(M)=M'=2pM$,
\[
 D_B\left(\frac1{2M r}\right)
 =-\frac{M'}{M}\frac1{2M r}
 =-\frac{p}{M r}.
\]
Thus
\begin{align*}
 D_B(Tf)
 &=D_B\left[\frac1{2M r}(\p_s f-2M f)\right]\\
 &=-\frac{2p}{2M r}(\p_s f-2M f)
   +\frac1{2M r}\bigl(\p_s(D_Bf)+2p\p_s f-2M' f-2M D_Bf\bigr)\\
 &=\frac1{2M r}\bigl(\p_s(D_Bf)-2M D_Bf\bigr)=T(D_Bf).
\end{align*}
Because $p$ is independent of $s$, $T(pf)=pTf$, and hence
\begin{equation}
 [H,T]=0.
 \label{eq:HTcomm}
\end{equation}
Similarly $D_B$ commutes with $r^{-1}\partial_\theta$, and $p$ is
independent of $\theta$, so
\begin{equation}
 [H,G]=0.
 \label{eq:HGcomm}
\end{equation}

\noindent \textbf{Step 3}.  We now establish \eqref{eq:crossidentity}. Apply first \eqref{eq:Lparts} with $\phi=HZ$, and integrate in $z$:
\begin{align}
 \ip L{HZ}
 =-\ip X{T(HZ)}-\ip Y{G(HZ)}
 +\int_{\mathbb S^1\times(1,\infty)}
   \left[\frac{X\overline{HZ}}{2M r}\right]_{1/2}^{1}
   \dd\theta\dd z.
 \label{eq:LHZ1}
\end{align}
The commutators \eqref{eq:HTcomm}--\eqref{eq:HGcomm} give
\[
 T(HZ)=H(TZ),\qquad G(HZ)=H(GZ).
\]
Recall (see \eqref{eq:Hadj}) that for smooth functions compactly supported in $z$, the following formal adjoint formula holds 
\begin{align}
 \ip{Hf}{g}+\ip f{Hg}
 &=
 -\int_{\mathbb S^1\times(1,\infty)}
 [Bf\overline g]_{s=1/2}^{s=1}\,\dd\theta\dd z.
 \label{eq:Hadjoint}
\end{align}

Use \eqref{eq:Hadjoint} twice to deduce that
\begin{align}
 -\ip X{H(TZ)}
 &=\ip{HX}{TZ}
 +\int_{\mathbb S^1\times(1,\infty)}
   [BX\overline{TZ}]_{1/2}^{1}\dd\theta\dd z,                      \label{eq:horizontalX}\\
 -\ip Y{H(GZ)}
 &=\ip{HY}{GZ}
 +\int_{\mathbb S^1\times(1,\infty)}
   [BY\overline{GZ}]_{1/2}^{1}\dd\theta\dd z.                      \label{eq:horizontalY}
\end{align}
The boundary term in \eqref{eq:horizontalY} is zero because $Y=0$. Since $Z=0$ on a fixed face, its tangential derivatives at that face vanish:
\[
\p_z Z=0,\qquad \p_\theta Z=0
 \quad\text{on }s=1/2,1.
\]
Therefore, on either face,
\begin{equation}
 HZ=\p_z Z-B\p_s Z-pZ=-B\p_s Z,\qquad
 TZ=\frac1{2M r}(\p_s Z-2M Z)=\frac{\p_s Z}{2M r}.
 \label{eq:boundaryHZTZ}
\end{equation}
The two remaining boundary integrands are consequently opposite:
\[
 BX\overline{TZ}
 +\frac{X\overline{HZ}}{2M r}
 =\frac{BX\overline{\p_s Z}}{2M r}
  -\frac{BX\overline{\p_s Z}}{2M r}=0.
\]
Substituting \eqref{eq:horizontalX}--\eqref{eq:horizontalY} into \eqref{eq:LHZ1} proves \eqref{eq:crossidentity}.  
\end{proof}

\, \\

\begin{proof}[Proof of Lemma \ref{lemma:aux3}]
Fix $z > 1$ and apply the change of variables $s \mapsto \zeta(s,z) = 2(1-s)M(z)$, $s \in (1/2,1)$.  Then
\begin{equation}\label{proof:changecoordszeta}
 \partial_s=-2M\partial_\zeta,\qquad
 \dd s=\frac{\dd \zeta}{2M},\qquad
 r=\rho\e^{-\zeta}.
\end{equation}
Recall that $1+ 2 \gamma \alpha z^{\alpha -1} \zeta = - \beta$.  Thus, in particular,
\begin{equation}\label{eq:BpsX}
 B \p_s X = \frac{\beta}{2M}(-(2M) \p_\zeta X) = -\beta \p_\zeta X.
\end{equation}
Due to the periodicity in $\theta$ of $X$ and $Y$, a Fourier series argument in the angular variable $\theta$ gives
\[
X(\zeta, \theta) = \sum_{m \in \Z} X_m(\zeta) \e^{\I m \theta}, \qquad Y(\zeta, \theta) = \sum_{m \in \Z} Y_m(\zeta) \e^{\I m \theta},
\]
$\zeta \in (0, M(z))$, $\theta \in [0, 2 \pi)$. Equations \eqref{def:K}, \eqref{def:L} then entail
\[
 K_m=-\frac1r(Y_m'+imX_m),\qquad
 L_m=\frac1r(-X_m'+imY_m),
\]
$m \in \Z$, where $'$ now means $\partial_\zeta$. 

For $m=0$, $L_0=-X_0'/r$, and therefore
\[
 |\beta X_0'|=|\beta r||L_0|
 \le C\rho(1+2 \gamma \alpha z^{\alpha -1})|L_0|.
\]

Let then $m\ge1$.  Define 
\[
 F=X_m+iY_m,\qquad G=X_m-iY_m.
\]
Then
\[
 L_m+iK_m=\frac1r(-F'+mF),\qquad
 L_m-iK_m=\frac1r(-G'-mG).
\]
Since $Y_m(\zeta)=0$ for $\zeta \in \{0,M\}$,
\[
 F(0)=G(0),\qquad F(M)=G(M).
\]
Write $f=-F'+mF$, $g=-G'-mG$; then
\[
|f|^2 = |F'|^2 + m^2 |F|^2 - m (|F|^2)', \quad |g|^2 = |G'|^2 + m^2 |G|^2 - m (|G|^2)'.
\]
Multiply the previous identities by $w(\zeta)=\e^{\zeta/2}$ and integrate by parts to obtain
\begin{align*}
 \int_0^M w(|f|^2+|g|^2)\dd \zeta
 ={}&\int_0^M w\bigl(
 |F'|^2+|G'|^2+(m^2+m/2)|F|^2\\
 &\hspace{35mm} +(m^2-m/2)|G|^2\bigr)\dd \zeta.
\end{align*}
The boundary contribution vanishes as $m[w(|G|^2-|F|^2)]_0^M=0$.  Since $m^2-m/2\ge m^2/2$ for $m \geq 1$,
\[
 \int_0^M w\left(
 |F'|^2+|G'|^2+\frac{m^2}{2}(|F|^2+|G|^2)\right)\dd \zeta
 \le\int_0^M w(|f|^2+|g|^2)\dd \zeta.
\]
Now
\[
 \beta^2 = (1+2 \gamma \alpha z^{\alpha -1} \zeta)^2 \le 2(1 + (2 \gamma \alpha z^{\alpha -1} \zeta)^2)\e^{-\zeta/2} e^{\zeta/2}
 \le 2(1+(2 \gamma \alpha z^{\alpha -1})^2)\e^{\zeta/2},
\]
and $X_m=(F+G)/2$, so
\[
 |X_m'|^2\le\frac12(|F'|^2+|G'|^2),\qquad
 m^2|X_m|^2\le\frac12m^2(|F|^2+|G|^2).
\]
Hence,
\begin{equation}\label{proof:Xm}
\begin{split}
 \int_0^M \beta^2(|X_m'|^2+m^2|X_m|^2)\dd \zeta &\le 2(1+(2 \gamma \alpha z^{\alpha -1})^2)\int_0^M\e^{\zeta/2}(|f|^2+|g|^2)\dd \zeta \\
 &\le 2(1+(2 \gamma \alpha z^{\alpha -1})^2)\int_0^M\e^{2 \zeta}(|f|^2+|g|^2)\dd \zeta 
 \end{split}
\end{equation}
But
\[
 |f|^2+|g|^2=2r^2(|K_m|^2+|L_m|^2),
\]
and recalling that $r = \rho e^{-\zeta}$, we have 
\begin{equation}\label{proof:estfinal}
\int_0^M\e^{2 \zeta}(|f|^2+|g|^2)\dd \zeta = \int_0^M \frac{2\rho^2}{2r^2}(|f|^2+|g|^2)\dd \zeta =  2\rho^2\int_0^M(|K_m|^2+|L_m|^2)\dd \zeta.
\end{equation}
By \eqref{proof:Xm} and \eqref{proof:estfinal} we obtain
\[
 \int_0^M (|\beta X_m'|^2 + |\beta \I m X_m|^2)\dd \zeta \leq 2(1+(2 \gamma \alpha z^{\alpha -1})^2)2\rho^2\int_0^M(|K_m|^2+|L_m|^2)\dd \zeta
\]
Recalling \eqref{proof:changecoordszeta}, \eqref{eq:BpsX} we have
\[
 \int_{1/2}^1(|B \p_s X_m|^2 + |\beta \I m X_m|^2)\dd \zeta \leq 2(1+(2 \gamma \alpha z^{\alpha -1})^2)2\rho^2\int_{1/2}^1(|K_m|^2+|L_m|^2)\dd \zeta
\]
for all $m \geq 0$. The case $m < 0$ is identical to the case $m > 0$ after replacing $m$ by $|m|$. Summation in $m$, and Parseval's identity prove \eqref{eq:transversecurlest}. 
\end{proof}

\end{document}